\newtheorem{assumption}{Assumption}[section]
\newtheorem{definition}{Definition}[section]
\newtheorem{lemma}{Lemma}[section]
\newtheorem{theorem}{Theorem}[section]
\newtheorem{remark}{Remark}[section]
\newtheorem{proposition}{Proposition}[section]
\title{Stochastic Zeroth-order Functional Constrained Optimization: Oracle Complexity and Applications}
\author{Anthony Nguyen\thanks{Department of Mathematics, University of California, Davis. \texttt{antngu@ucdavis.edu}.  } 
\and Krishnakumar Balasubramanian\thanks{Department of Statistics, University of California, Davis \texttt{kbala@ucdavis.edu}. } 
}
\begin{document}
\maketitle

\begin{abstract}
Functionally constrained stochastic optimization problems, where neither the objective function nor the constraint functions are analytically available, arise frequently in machine learning applications. In this work, assuming we only have access to the noisy evaluations of the objective and constraint functions, we propose and analyze stochastic zeroth-order algorithms for solving the above class of stochastic optimization problem. When the domain of the functions is $\mathbb{R}^n$, assuming there are $m$ constraint functions, we establish oracle complexities of order $\mathcal{O}((m+1)n/\epsilon^2)$ and $\mathcal{O}((m+1)n/\epsilon^3)$ respectively in the convex and nonconvex setting, where $\epsilon$ represents the accuracy of the solutions required in appropriately defined metrics. The established oracle complexities are, to our knowledge, the first such results in the literature for functionally constrained stochastic zeroth-order optimization problems. We demonstrate the applicability of our algorithms by illustrating its superior performance on the problem of hyperparameter tuning for sampling algorithms and neural network training.
\end{abstract}

\section{Introduction}\label{sec:intro}
We develop and analyze stochastic zeroth-order algorithms for solving the following non-linear optimization problem with functional constraints: 
\begin{equation}
\begin{aligned} 
    ~~ \min_{x \in X}~~f_0(x)\quad \text{such that}\quad f_i(x) \leqslant 0, \quad \color{black}{i \in \{0, 1, \dots, m\}},\label{eqn:1.1}
\end{aligned}
\end{equation}
where, for $i \in \{0, 1, \dots, m\}$, \textcolor{black}{$f_i: \mathbb{R}^n \to \mathbb{R}$ are continuous functions which are not necessarily convex defined as $f_i(x) = \mathbb{E}_{\xi_i}[F_i(x, \xi_i)]$ with $\xi_i$ denoting the noise vector associated with function $f_i$}, and  $X \subseteq \mathbb{R}^n$ is a convex compact set \textcolor{black}{that represents \emph{known} constraints (i.e., constraints that are analytically available)}. In the stochastic zeroth-order setting, we \emph{neither observe the objective function $f_0$ nor the constraint functions $f_i$ analytically}. We only have access to noisy function evaluations of them. The study of stochastic zeroth-order optimization algorithms for unconstrained optimization problems goes back to the early works of~\cite{kiefer1952stochastic, blum1954multidimensional, hooke1961direct, spendley1962sequential, powell1964efficient, nelder1965simplex,  nemirovskij1983problem, spall1987stochastic}. Such zeroth-order algorithms have 
proved to be extremely useful for hyperparameter tuning~\citep{snoek2012practical, hernandez2015predictive, gelbart2014bayesian, ruan2019learning, golovin2017google}, reinforcement learning~\citep{mania2018simple,salimans2017evolution, gao2020robotic, choromanski2020provably} and robotics~\citep{jaquier2020bayesian, jaquier2020high}. However, the study of zeroth-order algorithms and their oracle complexities for constrained problem as in~\eqref{eqn:1.1} is limited, despite the fact that several real-world machine learning problems fall under the setting of~\eqref{eqn:1.1}. We now describe two such applications that serve as  our main motivation for developing stochastic zeroth-order optimization algorithms for solving~\eqref{eqn:1.1}, and analyzing their oracle complexity.

\subsection{Motivating application I:} Hamiltonian Monte Carlo (HMC) algorithm, proposed by~\cite{duane1987hybrid} and popularized in the statistical machine learning community by~\cite{neal2011mcmc}{\color{black}{,}} is a gradient-based sampling algorithm that works by discretizing the continuous time degenerate Langevin diffusion~\citep{leimkuhler2015molecular}. It has been used successfully as a state-of-the art sampler or a numerical integrator in the Bayesian statistical machine learning community by~\cite{hoffman2014no, wang2013adaptive, girolami2011riemann, chen2014stochastic,carpenter2017stan}.  However, in order to obtain successful performance in practice using HMC, several hyperparameters need to be tuned optimally. Typically, the functional relationship between the hyperparameters that need to be tuned and the performance measure used is not available in an analytical form. We can only evaluate the performance of the sampler for various settings of the hyperparameter. Furthermore, in practice several constraints, for example, constraints on running times and constraints that enforce the generated samples to pass certain standard diagnostic tests~\citep{geweke1991evaluating, gelman1992single}, are enforced in the hyperparameter tuning process. The functional relationship between such constraints and the hyperparameters is also not available analytically. This makes the problem of optimally setting the hyperparameters for HMC a constrained zeroth-order optimization problem. As a preview, in Section~\ref{sec:hmctuning}, we show that our approach provides significant improvements over existing methods of~\cite{mahendran2012adaptive, gelbart2014bayesian, hernandez2015predictive}{\color{black}{,}} which are based on Bayesian optimization techniques for tuning HMC{\color{black}{,}} when we measure the performance \textcolor{black}{adopting} the widely used \emph{effective sample size} metric~\citep{kass1998markov}.

\subsection{Motivating application II:} Deep learning has achieved state-of-the-art performance in the recent years for various prediction tasks~\citep{goodfellow2016deep}. Among the various factors involved behind the success of deep learning, hyperparameter tuning is one of primary factors~\citep{snoek2012practical, bergstra2012random, li2017hyperband, hazan2018hyperparameter, elsken2019neural}. However, most of the existing methods for tuning the hyperparameters do not enforce any constraints on the prediction time required on the validation set or memory constraints on the training algorithm. Such constraints are typically required to make deep learning methods widely applicable to problem arising in
several consumer applications based on tiny devices~\citep{perera2015emerging, latre2011survey, yang2008distributed}. As in the above motivating application, the  functional relationship between such constraints and the hyperparameters is not available analytically. As a preview, in Section~\ref{sec:dnntuning}, we show that our approach provides significant improvements over the existing works of~\cite{gelbart2014bayesian, hernandez2015predictive, ariafar2019admmbo} that developed hyperparameter tuning techniques which explictly take into account time/memory constraints. 

\subsection{Related works} 

\textcolor{black}{In the operations research and statistics communities, zeroth-order optimization techniques are well-studied under the name of derivative-free optimization. Interested readers are referred to~\cite{conn2009introduction} and~\cite{audet2017derivative}.} In the machine learning community, Bayesian optimization techniques have been developed for optimizing functions with only noisy function evaluations. We refer the reader to~\cite{mockus1994application, kolda2003optimization, spall2005introduction, conn2009introduction, mockus2012bayesian, brent2013algorithms, shahriari2015taking, audet2017derivative, larson2019derivative, frazier2018tutorial, archetti2019bayesian, liu2020primer} for more details. In what follows, we focus on relevant literature from zeroth-order optimization and Bayesian optimization literature for \emph{known constrained optimization} problems \textcolor{black}{(i.e., problems with constraints that are analytically available)}. When the constraint set is analytically available and only the objective function is not,~\cite{lewis2002globally} and~\cite{bueno2013inexact} considered an augmented Lagrangian approach and an inexact restoration method respectively, and provided convergence analysis. Furthermore,~\cite{kolda2003optimization, amaioua2018efficient, audet2015linear} extended the popular mesh adaptive direct search to this setting. Projection-free methods based on Frank-Wolfe methods have been considered in~\cite{balasubramanian2018zero, sahu2019towards} for the case when the constraint set is a convex subset of $\mathbb{R}^n$. Furthermore,~\cite{li2020zeroth} considered the case when the constraint set is a Riemannian submanifold embedded in $\mathbb{R}^n$ (and the function is defined only over the manifold). None of the above works are directly applicable to the case of unknown constraints that we consider in this work.

We now discuss some existing methods for solving (variants of) problem~\eqref{eqn:1.1} in the zeroth-order setting.  For solving~\eqref{eqn:1.1} in the deterministic setting (i.e., we could obtain exact evaluations of the objective and the constraint functions at a given point), \emph{filter methods} which reduce the objective function while trying to reduce constraint violations were proposed and analyzed in~\cite{audet2004pattern, echebest2017inexact, pourmohamad2020statistical}. Barrier methods in the zeroth-order setting were considered in~\cite{audet2006mesh, audet2009progressive, liuzzi2009derivative, gratton2014merit, fasano2014linesearch, liuzzi2010sequential, dzahini2020constrained}, with some works also developing line search approaches for setting the tuning parameters. Model based approaches were considered in the works of~\cite{muller2017gosac, troltzsch2016sequential, augustin2014nowpac, gramacy2016modeling, conn2013use}. Furthermore,~\cite{bHurmen2006grid, audet2018mesh} developed extensions of Nelder--Mead algorithm to the constrained setting. 

\textcolor{black}{S}everal works in the statistical machine learning community also considered Bayesian optimization methods in the constrained setting, in both the noiseless and noisy setting. We refer the reader, for example, to ~\cite{gardner2014bayesian, gelbart2016general, ariafar2019admmbo,  balandat2020botorch, bachoc2020gaussian, greenhill2020bayesian, eriksson2020scalable, letham2019constrained, hernandez2015predictive, lam2017lookahead, picheny2016bayesian, acerbi2017practical}. On one hand, the above works demonstrate the interest in the optimization and machine learning communities for developing algorithms for constrained zeroth-order optimization problems. On the other hand, most of the above works are not designed to handle \emph{stochastic} zeroth-order constrained optimization that we consider. Furthermore, a majority of the above works are methodological, and the few works that develop convergence analysis do so only in the asymptotic setting. \textcolor{black}{A recent work by~\cite{usmanova2019safe}} considered the case when the constraints are linear functions (but unknown), and provided a Frank-Wolfe based algorithm with \emph{estimated} constraints. However, the proposed approach is limited to only linear constraints, and the oracle complexities established are highly sub-optimal. To the best of our knowledge, there is no rigorous non-asymptotic analysis of the oracle complexity of stochastic zeroth-optimization when the constraints and the objective values are available only via \emph{noisy} function evaluations.

\subsection{Methodology and Main Contributions:} Our methodology is based on a novel constraint extrapolation technique developed for the zeroth-order setting, extending the work of~\cite{boob2019proximal} in the first-order setting, and the Gaussian smoothing based zeroth-order stochastic gradient estimators. Specifically, we propose the~\texttt{SZO-ConEX} method in Algorithm~\ref{Algorithm1} for solving problems of the form in~\eqref{eqn:1.1}. We theoretically characterize how to set the \emph{tuning parameters} of the algorithm so as to mitigate the issues caused by the \emph{bias} in the stochastic zeroth-order gradient estimates and obtain the oracle complexity of our algorithm. More specifically, we make the following main contributions: 
\begin{itemize}[leftmargin=0.2in]
\item When the functions $f_i$, $i=0,\ldots, m$, are convex, in Theorem~\ref{thm11}, we show that the number of calls to the stochastic zeroth-order oracle to achieve an appropriately defined $\epsilon$-optimal solution of~\eqref{eqn:1.1} (see Definition~\ref{def:epsconvex}) is of order $\mathcal{O}((m+1)n/\epsilon^2)$.
\item When the functions are nonconvex, in Proposition~\ref{prop:nonconvex}, we show that the number of calls to the stochastic zeroth-order oracle to \textcolor{black}{achieve} an appropriately defined $\epsilon$-optimal KKT solution of~\eqref{eqn:1.1} (see Definition~\ref{def:epsnonconvex}) is of order $\mathcal{O}((m+1)n/\epsilon^3)$. 
\end{itemize}
To our knowledge, these are the first non-asymptotic oracle complexity \textcolor{black}{results} for stochastic zeroth-order optimization with stochastic zeroth-order functional constraints. We illustrate the practical applicability of the developed methodology by testing its performance on hyperparameter tuning for HMC sampling algorithm (Section~\ref{sec:hmctuning}) and 3-layer neural network (Section~\ref{sec:dnntuning}). 

\section{Preliminaries and Methodology}\label{sec:prelim}
\textbf{Notations:} Let $\mathbf{0}$ denote the vector of elements $0$ and $[m] \coloneqq \{1, \dots, m\}$. Let $f(x) \coloneqq [f_1(x), \dots, f_m(x)]^T$; then, the constraints in \eqref{eqn:1.1} \textcolor{black}{can} be expressed as $f(x) \leqslant \mathbf{0}$.  We use $\xi \coloneqq [\xi_1, \cdots, \xi_m]$ to denote the random vectors in the constraints. Furthermore, $\| \cdot \|$ denotes a general norm and $\| \cdot \|_*$ denotes its dual norm defined as $\| z \|_* \coloneqq \sup\{ z^T x : \| x \| \leq 1\}$. 
Furthermore, \quad $[x]_+ \coloneqq \max\{x, 0\}$ for any $x \in \mathbb{R}$. For any vector $x \in \mathbb{R}^k$, we define $[x]_+$ as element-wise application of $[\cdot ]_+$. 

We now describe the precise assumption made on the \emph{stochastic zeroth-order oracle} in this work.
 
\begin{assumption} \label{Assumption1}
Let $\| \cdot \|$ be a norm on $\mathbb{R}^n$. For $i \in \{0, \dots, m\}$ and for any $x \in \mathbb{R}^n$, the zeroth-order oracle outputs an estimator $F_i(x, \xi_i)$ of $f_i(x)$ such that $\mathbb{E}[F_i(x, \xi_i)] = f_i(x)$, $\mathbb{E}[F_i(x, \xi_i)^2] \leq \sigma^2_{f_i}$, $\mathbb{E}[\nabla F_i(x, \xi_i)] = \nabla f_i(x)$, $\mathbb{E}[\| \nabla F_i(x, \xi_i) - \nabla f_i(x)\|_*^2] \leqslant \sigma_i^2$, , where $\| \cdot \|_*$ denotes the dual norm.
\end{assumption}
The assumption above assumes that we have access to a stochastic zeroth-order oracle which provides unbiased function evaluations with bounded variance. It is worth noting that in the above assumption, we do not necessarily assume the noise $\xi_i$ is additive. Furthermore, we allow for different noise models for the objective function and the $m$ constraint functions, which is a significantly general model compared to several existing works \textcolor{black}{such as}~\cite{digabel2015taxonomy}. Our gradient estimator is then constructed by leveraging the Gaussian smoothing technique \textcolor{black}{proposed in}~\cite{nemirovskij1983problem, nesterov2017random}. For $\nu_i \in (0, \infty)$ we introduce the smoothed function $f_{i, \nu_{i}}(x) = \mathbb{E}_{u_i}[f_i(x + \nu_i u_i)]$ where $u_i \sim N(0, I_n)$ and independent across $i$. We can estimate the gradient of this smoothed function using function evaluations of $f_i$. Specifically, we define the stochastic zeroth-order gradient of $f_{i, \nu_i}(x)$ as 
\begin{align}\label{eq:gradest}
    G_{i, \nu_i}(x, \xi_i, u_i) = \frac{F_i(x + \nu_i u_i, \xi_i) - F_i(x, \xi_i)}{\nu_i}u_i,
\end{align}
which is an unbiased estimator of $\nabla f_{i, \nu_i}(x)$, i.e., we have $\mathbb{E}_{u, \xi_i}[G_{i, \nu_i}(x, \xi_i, u)] = \nabla f_{i, \nu_i}(x)$. However, it is well-known that $ G_{i, \nu_i}(x, \xi_i, u_i)$ is a biased estimator of $\nabla f_i(x)$. An interpretation of the gradient estimator in~\eqref{eq:gradest} as a consequence of Gaussian Stein's identity, popular in the statistics literature~\textcolor{black}{\citep{stein1972bound}}, was provided in~\cite{balasubramanian2018zeroth}. 

The gradient estimator in~\eqref{eq:gradest} is referred to as the two-point estimator in the literature. The reason is that, for a given random vector $\xi_i$, it is assumed that the stochastic function in~\eqref{eq:gradest} could be evaluated at two points, $F_{\color{black}{i}}(x + \nu_i u_i,\xi_i)$ and $F_{\color{black}{i}}(x,\xi_i)$. Such an assumption is satisfied in several statistics, \textcolor{black}{machine learning, simulation based optimization, and sampling problems}; see for example~\cite{spall2005introduction, mokkadem2007companion, dippon2003accelerated, agarwal2010optimal, duchi2015optimal, ghadimi2013stochastic, nesterov2017random}. Yet another estimator in the literature is the one-point estimator, which assumes that for each $\xi_i$, we observe only one noisy function evaluation $F_{\color{black}{i}}(x + \nu_i u_i,\xi_i)$. It is well-known that the one-point setting is more challenging than the two-point setting~\textcolor{black}{\citep{shamir2013complexity}}. From a theoretical point of view, the use of two-point evaluation based gradient estimator is primarily motivated by the sub-optimality (in terms of oracle complexity) of one-point feedback based stochastic zeroth-order optimization methods either in terms of the approximation accuracy or dimension dependency. For the rest of this work, we focus on the two-point setting and leave the question of obtaining results in the one-point setting as future work. We now describe our assumptions on the objective and constraint functions. 
\begin{assumption} \label{Assumption3}
Function $F_i$ has Lipschitz continuous gradient with constant $L_i$, almost surely for any $\xi_i$, i.e., $\| \nabla F_i(y, \xi_i) - \nabla F_i(x, \xi_i)\|_* \leqslant L_i \| y - x \|$, which consequently implies that $|F_i(y, \xi_i) - F_i(x, \xi_i) - \langle \nabla F_i(x, \xi_i), y - x\rangle | \leqslant \frac{L_i}{2}\| y - x\|^2$ for $i \in \{0, 1, \dots, m\}$.
\end{assumption}
\begin{assumption} \label{Assumption2}
Function $F_i$ is Lipschitz continuous with constant $M_{i}$, almost surely for any $\xi_i$, i.e., $| F_i(y, \xi_i) - F_i(x, \xi_i) | \leq M_{i}\|y - x\|$, for $i \in \{0, 1, \dots, m\}$. 
\end{assumption}
The above smoothness assumptions are standard in the literature on stochastic zeroth-order optimization and are made in several works~\cite{nesterov2017random,ghadimi2013stochastic, balasubramanian2018zeroth} for obtaining oracle complexity results. It is easy to see that Assumption \ref{Assumption3} implies that for $i \in \{0, \dots, m\}$, $f_i$ has Lipschitz continuous gradient with constant $L_i$ since $\| \nabla f_i(y) - \nabla f_i(x) \|_* \leqslant \mathbb{E}[\| \nabla F(y, \xi) - \nabla F(x, \xi)\|_*] \leqslant L_i\| y - x\|$, due to Jensen's inequality for the dual norm. By similar reasoning and Assumption \ref{Assumption2}, we also see that $f_i$ is Lipschitz continuous with constant $M_{i}$. Due to Assumptions \ref{Assumption3} and~\ref{Assumption2}, we also have $\| f(x_1) - f(x_2) \|_2  \leqslant M_f \|x_1 - x_2\|$, $\| \nabla f(x_2)^T(x_1 - x_2)\|_2 \leqslant M_f\|x_1 - x_2\|$ and $\| f(x_1) - f(x_2) - \nabla f(x_2)^T(x_1 - x_2)\|_2 \leqslant \frac{L_f}{2}\|x_1 - x_2\|^2$, for all $x_1, x_2 \in \mathbb{R}^n$, where $\nabla f(\cdot) := [\nabla f_1(\cdot), \dots, \nabla f_m(\cdot)] \in \mathbb{R}^{n \times m}$ and constants $M_f$ and $L_f$ are defined as 
\begin{align}
M_f \coloneqq \sqrt{\textstyle\sum_{i=1}^mM_{i}^2}~~~\text{and}~L_f \coloneqq \sqrt{\textstyle\sum_{i=1}^mL_i^2}. \label{eq:1.10}
\end{align}

We now state the definition of the \textbf{prox}-function and the \textbf{prox}-operator. The class of algorithms based on \textbf{prox}-operators are called proximal algorithms. Such algorithms have turned out to be particularly useful for efficiently solving various machine learning problems in the recent past. We refer the interested reader to~\cite{parikh2014proximal, beck2017first} for more details.
\begin{definition}\label{def:www}
Let $\omega : X \to \mathbb{R}$ be continuously differentiable, $L_{\omega}$-Lipschitz gradient smooth, and $1$-strongly convex with respect to $\| \cdot \|$ function. We define the \textbf{prox}-function associated with $\omega(\cdot)$, $\forall x, y \in \mathbb{R}^n$, as $W(y, x)\coloneqq \omega(y) - \omega(x) - \langle \nabla \omega(x), y - x\rangle$. Based on the smoothness and strong convexity of $\omega(x)$, we have $W(y, x) \leqslant \frac{L_{\omega}}{2}\| x - y \|^2 \leqslant L_{\omega} W(x, y)$, $\forall x, y \in \mathbb{R}^n$. For any $v \in \mathbb{R}^n$, we define the following \textbf{prox}-operator as $\textbf{prox}(v, \tilde{x}, \eta) := \arg\min_{x \in X}\{\langle v, x\rangle + \eta W(x, \tilde{x})\}$.
\end{definition}
The function $W$ is also called as Bregman divergence in the literature. A canonical example of $W$ is that of the Euclidean distance function $\| x - y \|^2$ which is useful when $X=\mathbb{R}^n$. We will see in Section~\ref{sec:methods}~that our algorithm is based on the above \textbf{prox}-operator. Finally, we have the following results which will prove to be useful for subsequent calculations. Let $u\coloneqq [u_1, \cdots, u_m]$ and $D_X\coloneqq \sup_{x,y \in X} \sqrt{W(x,y)}$ be the diameter of the set $X${\color{black}{.}}

\begin{lemma}\label{lemma:zogradbounds}
Let $\nu \coloneqq [\nu_1,\cdots, \nu_m]$, $F_{\nu}(x, \xi, u)\coloneqq[F_1(x + \nu_1u_1, \xi_1), \ldots, F_m(x+ \nu_m u_m, \xi_m)]^T$ and $f_{\nu}(x)\coloneqq [f_{1, \nu_1}(x), \ldots, f_{m, \nu_m}(x)]^T$. Under assumption \ref{Assumption2}, we have $\mathbb{E}_{u,\xi}[\|F_{\nu}(x, \xi, u) - f_{\nu}(x)\|^2] \leqslant \sigma_{f, \nu}^2$, where $\sigma_{f, \nu}^2 := (\textstyle\sum_{i=1}^m 4(n+2)M_{i}^2\nu_i^2 + L_i^2\nu_i^4n^2) + 2\sigma_f^2$, where $\sigma_f^2 = \sum_{i=1}^m \sigma^2_{f_i}$.
\end{lemma}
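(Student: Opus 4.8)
The plan is to reduce the vector bound to a sum of scalar second-moment bounds, and then control each scalar term by splitting it into a \emph{noise} contribution (governed by Assumption~\ref{Assumption1}) and a \emph{smoothing} contribution (governed by the Lipschitz and smoothness Assumptions~\ref{Assumption2} and~\ref{Assumption3}). Since the relevant norm on $\mathbb{R}^m$ is Euclidean and each coordinate of $F_\nu(x,\xi,u) - f_\nu(x)$ depends only on $(u_i,\xi_i)$, linearity of expectation gives
$$\mathbb{E}_{u,\xi}[\|F_\nu(x,\xi,u) - f_\nu(x)\|^2] = \sum_{i=1}^m \mathbb{E}_{u_i,\xi_i}\big[(F_i(x+\nu_iu_i,\xi_i) - f_{i,\nu_i}(x))^2\big],$$
so it suffices to bound each summand by $4(n+2)M_i^2\nu_i^2 + L_i^2\nu_i^4 n^2 + 2\sigma^2_{f_i}$ and then sum.

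For a fixed $i$, I would decompose the per-coordinate error as
$$F_i(x+\nu_iu_i,\xi_i) - f_{i,\nu_i}(x) = \underbrace{[F_i(x+\nu_iu_i,\xi_i) - f_i(x+\nu_iu_i)]}_{A_i} + \underbrace{[f_i(x+\nu_iu_i) - f_{i,\nu_i}(x)]}_{B_i}$$
and apply $(a+b)^2 \le 2a^2 + 2b^2$. For $A_i$, conditioning on $u_i$ and using $\mathbb{E}_{\xi_i}[F_i(\cdot,\xi_i)] = f_i(\cdot)$ together with the second-moment bound $\mathbb{E}[F_i(\cdot,\xi_i)^2] \le \sigma^2_{f_i}$ from Assumption~\ref{Assumption1}, the conditional variance of $F_i$ at the perturbed point is dominated by its second moment, whence $\mathbb{E}[A_i^2] \le \sigma^2_{f_i}$; this produces the $2\sigma_f^2$ term after summing, since $\sigma_f^2 = \sum_i \sigma^2_{f_i}$.

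The term $B_i$ is the smoothing error and carries the dimension dependence. I would further split $B_i = [f_i(x+\nu_iu_i) - f_i(x)] + [f_i(x) - f_{i,\nu_i}(x)]$ and again use $(a+b)^2 \le 2a^2+2b^2$. The first piece is bounded by Lipschitz continuity of $f_i$ (inherited from Assumption~\ref{Assumption2}), namely $|f_i(x+\nu_iu_i)-f_i(x)| \le M_i\nu_i\|u_i\|$, so its squared expectation is at most $M_i^2\nu_i^2\,\mathbb{E}[\|u_i\|^2] \le (n+2)M_i^2\nu_i^2$ via the standard Gaussian moment estimate $\mathbb{E}\|u_i\|^2 \le n+2$. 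The second piece is the deterministic smoothing bias, which I would control using $L_i$-smoothness (from Assumption~\ref{Assumption3}) through the Nesterov--Spokoiny estimate $|f_{i,\nu_i}(x) - f_i(x)| \le \tfrac{L_i\nu_i^2}{2}\mathbb{E}\|u_i\|^2 = \tfrac{L_i\nu_i^2 n}{2}$, obtained by taking expectations in $|f_i(x+\nu_iu_i) - f_i(x) - \nu_i\langle\nabla f_i(x),u_i\rangle| \le \tfrac{L_i\nu_i^2}{2}\|u_i\|^2$ and using $\mathbb{E}[\langle\nabla f_i(x),u_i\rangle]=0$. Collecting, $\mathbb{E}[B_i^2] \le 2(n+2)M_i^2\nu_i^2 + \tfrac{1}{2}L_i^2\nu_i^4n^2$, so that $2\mathbb{E}[B_i^2] \le 4(n+2)M_i^2\nu_i^2 + L_i^2\nu_i^4n^2$; adding $2\mathbb{E}[A_i^2]$ and summing over $i$ reproduces $\sigma_{f,\nu}^2$ exactly.

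The calculation is a sequence of routine second-moment estimates, so the only genuinely delicate points are (i) ensuring the second-moment bound of Assumption~\ref{Assumption1} is applied at the \emph{perturbed} argument $x+\nu_iu_i$, so that the conditional-variance step is valid for every realization of $u_i$; and (ii) invoking the correct Gaussian moments---the bound $\mathbb{E}\|u_i\|^2\le n+2$ for the Lipschitz increment and the exact $\mathbb{E}\|u_i\|^2=n$ inside the smoothing-bias estimate---so that the constants land on $4(n+2)$ and $n^2$ rather than looser values. I expect step (ii), the bookkeeping of the smoothing bias against the Gaussian moments, to be where the stated constants are pinned down, whereas the remaining steps follow directly from Jensen/Cauchy--Schwarz and the assumptions already in place.
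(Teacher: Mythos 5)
Your proof is correct and follows essentially the same route as the paper's: both reduce to a coordinatewise second-moment bound, split each coordinate into a noise term, a Lipschitz increment, and the Nesterov--Spokoiny smoothing bias $|f_i(x)-f_{i,\nu_i}(x)|\le \tfrac{\nu_i^2}{2}L_i n$, and apply Young's inequality with the same weights to arrive at the identical per-coordinate bound $4(n+2)M_i^2\nu_i^2 + L_i^2\nu_i^4 n^2 + 2\sigma_{f_i}^2$. The only cosmetic difference is that the paper anchors the noise term at the base point $x$ and bounds the increment $F_i(x+\nu_iu_i,\xi_i)-F_i(x,\xi_i)$ via the almost-sure Lipschitzness of $F_i$ from Assumption~\ref{Assumption2}, whereas you anchor the noise at the perturbed point $x+\nu_iu_i$ and bound the increment of the deterministic $f_i$; both are valid under the stated assumptions and yield the same constants.
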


\begin{lemma}\label{thm:gradestrate}
Let $\tilde{B}_i := \frac{\nu_i}{2}L_i (n+3)^{3/2} + L_i D_X + M_{i}$. Under assumptions \ref{Assumption1} and \ref{Assumption3}, we have 
\begin{align}
    &\mathbb{E}_{u,\xi}[\| G_{i, \nu_i}(x, \xi, u) - \nabla f_{i, \nu_i}(x) \|^2]  \leqslant  \sigma_{i, \nu_i}^2,\label{eq:1.30}
\end{align}
where $\sigma_{i, \nu_i}^2 \coloneqq    \nu_i^2L_i^2(n+6)^3 + 10(n+4)[\sigma_i^2 + \tilde{B}_i^2]$.
\end{lemma}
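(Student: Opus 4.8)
The plan is to reduce the claim to a bound on the raw second moment of the estimator, and then to estimate that second moment by a first-order expansion of the \emph{realized} function $F_i(\cdot,\xi_i)$ together with standard Gaussian moment inequalities. Since $G_{i,\nu_i}(x,\xi_i,u_i)$ is an unbiased estimator of $\nabla f_{i,\nu_i}(x)$, we have $\mathbb{E}_{u,\xi}[\|G_{i,\nu_i}(x,\xi_i,u_i) - \nabla f_{i,\nu_i}(x)\|^2] = \mathbb{E}_{u,\xi}[\|G_{i,\nu_i}(x,\xi_i,u_i)\|^2] - \|\nabla f_{i,\nu_i}(x)\|^2 \leqslant \mathbb{E}_{u,\xi}[\|G_{i,\nu_i}(x,\xi_i,u_i)\|^2]$, so it suffices to control the second moment.

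First I would fix the realization $\xi_i$ and expand the scalar increment. Writing $s_i := \nu_i^{-1}(F_i(x+\nu_i u_i,\xi_i) - F_i(x,\xi_i))$, Assumption~\ref{Assumption3} (the almost sure $L_i$-smoothness of $F_i(\cdot,\xi_i)$) gives $|s_i - \langle \nabla F_i(x,\xi_i), u_i\rangle| \leqslant \frac{L_i\nu_i}{2}\|u_i\|^2$. Since $\|G_{i,\nu_i}\|^2 = s_i^2\|u_i\|^2$, applying $(a+b)^2 \leqslant 2a^2 + 2b^2$ splits $\mathbb{E}_u[\|G_{i,\nu_i}\|^2]$ into a linear-in-gradient term $2\,\mathbb{E}_u[\langle\nabla F_i(x,\xi_i),u_i\rangle^2\|u_i\|^2]$ and a smoothing-remainder term $\frac{L_i^2\nu_i^2}{2}\mathbb{E}_u[\|u_i\|^6]$. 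The Gaussian moment estimates $\mathbb{E}_u[\langle v,u_i\rangle^2\|u_i\|^2] \leqslant (n+4)\|v\|_*^2$ and $\mathbb{E}_u[\|u_i\|^6]\leqslant (n+6)^3$ then bound these by $(n+4)\|\nabla F_i(x,\xi_i)\|_*^2$ (up to the leading factor $2$) and $\frac{L_i^2\nu_i^2}{2}(n+6)^3$ respectively; the latter already fits inside the $\nu_i^2 L_i^2(n+6)^3$ term of $\sigma_{i,\nu_i}^2$ with room to spare.

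Next I would take expectation over $\xi_i$ and split the stochastic gradient into mean and noise, $\nabla F_i(x,\xi_i) = \nabla f_i(x) + (\nabla F_i(x,\xi_i) - \nabla f_i(x))$, so that $\mathbb{E}_{\xi_i}[\|\nabla F_i(x,\xi_i)\|_*^2] \leqslant 2\|\nabla f_i(x)\|_*^2 + 2\sigma_i^2$ by Assumption~\ref{Assumption1}. The remaining task is to bound the deterministic magnitude $\|\nabla f_i(x)\|_*$ by $\tilde{B}_i$. Here I would combine three ingredients: the Lipschitz bound $\|\nabla f_i(x)\|_* \leqslant M_i$ from Assumption~\ref{Assumption2}; the Gaussian smoothing-bias estimate $\|\nabla f_{i,\nu_i}(x) - \nabla f_i(x)\|_* \leqslant \frac{\nu_i}{2}L_i(n+3)^{3/2}$, needed because the estimator is centered at $\nabla f_{i,\nu_i}$ rather than at $\nabla f_i$; and the $L_iD_X$ contribution obtained by controlling the gradient uniformly over $X$ relative to a reference point via $L_i$-smoothness and the diameter $D_X$. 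These are exactly the three summands of $\tilde{B}_i$, so $\|\nabla f_i(x)\|_* \leqslant \tilde{B}_i$ (indeed with slack). Substituting and absorbing the various factors of $2$ into the coefficient $10(n+4)$ then yields $\sigma_{i,\nu_i}^2 = \nu_i^2L_i^2(n+6)^3 + 10(n+4)[\sigma_i^2 + \tilde{B}_i^2]$.

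I expect the main obstacle to be the dimension dependence of the noise contribution rather than any single inequality: the noise must enter only through the gradient second moment $\sigma_i^2$ with a prefactor \emph{linear} in $n$. This forces one to exploit the two-point structure (the common $\xi_i$ at both evaluations) and to expand around $\nabla F_i(x,\xi_i)$ \emph{directly}, rather than first subtracting the deterministic smoothed estimator $\bar{g}_i(x,u_i)=\nu_i^{-1}(f_i(x+\nu_i u_i)-f_i(x))u_i$; the naive split carries a stray $\nu_i^{-1}$ into the noise term and, after an integral/Cauchy--Schwarz bound, produces $\mathbb{E}_u[\|u_i\|^4]\,\sigma_i^2 \sim n^2\sigma_i^2$, losing a factor of $n$. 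The only remaining care point is bookkeeping: tracking which inequality contributes each summand of $\tilde{B}_i$ and checking that the constants close under the deliberately loose $10(n+4)$ and $(n+6)^3$ factors.
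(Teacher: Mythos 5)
Your proposal is correct and follows essentially the same route as the paper's proof: condition on $\xi_i$, apply the Nesterov--Spokoiny second-moment bound for the finite-difference quadratic form (the paper cites this as the third item of Theorem~\ref{thm1} applied to $F_i(\cdot,\xi_i)$, whereas you re-derive it from the a.s.\ smoothness and the Gaussian moment estimates), then split $\nabla F_i(x,\xi_i)$ into $\nabla f_i(x)$ plus noise and bound $\|\nabla f_i(x)\|$ by $\tilde B_i$. The one genuine deviation is your opening reduction: you invoke the bias--variance identity $\mathbb{E}\|G-\nabla f_{i,\nu_i}\|^2=\mathbb{E}\|G\|^2-\|\nabla f_{i,\nu_i}\|^2$, which buys you a tighter constant (you land at $4(n+4)$ rather than $10(n+4)$) and lets you bypass bounding $\|\nabla f_{i,\nu_i}(x)\|$ altogether --- this is precisely why you correctly observe that the $(n+3)^{3/2}$ summand of $\tilde B_i$ is slack in your argument; the paper instead uses the cruder split $\mathbb{E}\|G-\nabla f_{i,\nu_i}\|^2\leqslant 2\mathbb{E}\|G\|^2+2\|\nabla f_{i,\nu_i}\|^2$, which is where the bound $\|\nabla f_{i,\nu_i}(x)\|\leqslant\tilde B_i$ (and hence the bias term in $\tilde B_i$) is actually consumed. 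The only caveat is that the exact variance identity requires the norm to come from an inner product, while the paper's statement is phrased for a general norm with dual $\|\cdot\|_*$; since the Gaussian moment bounds being used are Euclidean anyway, this does not affect correctness in the setting where the lemma is applied, but if you want the argument to hold verbatim for a general norm you should fall back on the paper's Young-type split.
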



\subsection{Algorithmic Methodology}\label{sec:methods}
We now present the \texttt{SZO-ConEX} algorithm for solving the stochastic zeroth-order functional constrained optimization problem~\eqref{eqn:1.1}. \textcolor{black}{The constraint extrapolation framework is a novel primal-dual method that proceeds by (i) considering the Lagrangian formulation of~\eqref{eqn:1.1}, (ii) constructing linear approximations for the constraint functions, and (iii) constructing an \emph{extrapolation operation} which enables acceleration. Such an approach has the advantage that: (i) it does not require the projection of Lagrangian multipliers onto a possibly unknown bounded set (which is required by several other primal-dual methods), (ii) it is a single-loop algorithm with a built-in acceleration step. It is worth remarking that~\cite{boob2019proximal} and~\cite{hamedani2018primal} showed that such an approach helps achieve better rate of convergence than existing methods for solving Lagrangian problems (of the form in~\eqref{eq:1.5} below) in the stochastic first-order setting}. However, their approach is not directly applicable to the zeroth-order setting where the estimated stochastic gradients are biased and have variances that are not uniformly bounded. 

Recall the problem in~\eqref{eqn:1.1} and notice that there are two types of constraints. \textcolor{black}{The set $X$ represents \emph{known} constraints (i.e., constraints that are analytically available)} and the inequality constraints defined by the functions $f_i$, $i\in [m]$ are the \emph{unknown or zeroth-order constraints}. The Lagrangian of~\eqref{eqn:1.1} is given by
\begin{align}
    \min_{x \in X}\max_{y \geqslant \mathbf{0}}\{\mathcal{L}(x, y) := f_0(x) + \textstyle\sum_{i=1}^m~y_i f_i(x)\}. \label{eq:1.5}
\end{align}
In other words, $(x^*, y^*)$ is a \textit{saddle point} of the Lagrange function $\mathcal{L}(x, y)$ such that 
\begin{align}
    \mathcal{L}(x^*, y) \leqslant \mathcal{L}(x^*, y^*) \leqslant \mathcal{L}(x, y^*), \label{eq:1.3}
\end{align}
for all $x \in X, y \geqslant \mathbf{0}$, whenever the optimal dual, $y^*$, exists. Throughout this work, we assume the existence of $y^*$ satisfying \eqref{eq:1.3}. In order to handle the zeroth-order setting, we also define Lagrangian with the smoothed functions as 
\begin{align}\label{eq:zolagrangian}
\mathcal{L}_{\nu}(x, y) \coloneqq f_{0, \nu_0}(x) + \textstyle\sum_{i=1}^m~y_i f_{i, \nu_i}(x).
\end{align}

Now, we describe the linearization in the context of the iterates directly as it will be easier to understand in the stochastic setting that we are in. Let $x^{(t)}$ be the sequence \textcolor{black}{produced by} the algorithm (to be discussed later). The linearization of $f(\cdot)$ at the point $x^{(t)}$, with respect to the point $x^{(t-1)}$, is given by 
\begin{align*}
    \ell_{f}(x^{(t)})\coloneqq f_{\nu}(x^{(t-1)}) + \nabla f_{\nu}(x^{(t-1)})^T(x^{(t)} -x^{(t-1)}),
\end{align*}
where similar to $\nabla f$, we define $\nabla f_{\nu}(x^{(t-1)})\coloneqq [\nabla f_{1, \nu_1}(x^{(t-1)}), \dots, \nabla f_{m, \nu_m}(x^{(t-1)})]$. For the implementation, we use the version of linearization with the Gaussian smoothing based stochastic zeroth-order gradients. In particular, we define 
$\ell_{F}(x^{(t)})\coloneqq F_{\nu}(x^{(t-1)}, \bar{\xi}^{(t-1)}, \bar{u}^{(t-1)})+ {G}_{\nu}(x^{(t-1)}, \overline{\xi}^{(t-1)}, \overline{u}^{(t-1)})^T(x^{(t)} - x^{(t-1)}),$
where ${G}_{\nu}(x^{(t-1)}, \overline{\xi}^{(t-1)}, \overline{u}^{(t-1)}) \in \mathbb{R}^{n \times m}$ is given by $$ \hspace{-0.25in}[G_{1, \nu_1}(x^{(t-1)}, \overline{\xi}_1^{(t-1)}, \overline{u}_1^{(t-1)}), \dots, G_{m, \nu_m}(x^{(t-1)}, \overline{\xi}_m^{(t-1)}, \overline{u}_m^{(t-1)})].$$ Here, by $\overline{\xi}^{(t-1)}, \overline{u}^{(t-1)}$ we mean an independent (of $\xi^{(t-1)}, u^{(t-1)}$, respectively) realization of random objects $\xi, u$, respectively.

Based on this, the overall procedure, termed as~\texttt{SZO-ConEx}~is provide\textcolor{black}{d} in Algorithm~\ref{Algorithm1}. We now explain the individual steps in more detail.
\begin{itemize}
    \item \textcolor{black}{Step 3: This extrapolation step, considered by~\cite{boob2019proximal} (see also,~\cite{hamedani2018primal}) for the stochastic first-order setting forms the main methodological innovation over existing primal-dual method. First, note that instead of working with constraint functions, we work with a stochastic linearization of them. The extrapolation or moving average is essentially a way to incorporate momentum in the $s^{(t)}$ sequence. From the analysis, it turns out that the choice of constant $\theta_t$ (which we set as $\theta_t=1$ without any loss of generality) gives the best possible oracle complexity in our analysis.}
    
    \textcolor{black}{It is also worth remarking that the extrapolation/moving-average approach has been also used recently in stochastic optimization of composition of two functions in~\cite{ghadimi2020single}. Furthermore, the linearization technique is also used in stochastic optimization of composition of $T$ functions, for any $T\geq1$, in \cite{ruszczynski2021stochastic} and \cite{balasubramanian2022stochastic}.   }
    \item \textcolor{black}{Step 4: This step corresponds to the gradient ascent step to address the maximization problem in the Lagrangian formulation. We let parameter $\tau_t$ depend on $t$ in the algorithm. However, the analysis in Section~\ref{sec:thms} reveals that a constant step-size of $\tau_t=\tau$ suffices to obtain the derived oracle complexity.} 
    \item \textcolor{black}{Step 5: This step corresponds to the descent step, or more precisely the proximal gradient descent step to solve minimization part of the saddle point problem in the Lagrangian formulation. We remark that one could potentially replace the proximal gradient step with a conditional gradient step when performing linear-minimization over the set $X$ is computationally efficient. We leave a rigorous oracle complexity analysis of this modification as future work.}
    \item \textcolor{black}{Step 6: This step corresponds to the averaging of the iterates. As we demonstrate later in the analysis in Section~\ref{sec:thms}, in the convex and non-convex settings that we consider, the best oracle complexities obtained correspond to the case of constant choice, i.e., $\gamma_t=1$ without loss of generality. However, we suspect that there might be advantages of considering time-varying $\gamma_t$ for the challenging case of adaptive algorithms, that do not necessarily know the structure of the optimization problem at hand. We leave a detailed analysis of such adaptive algorithms as future work.}  
\end{itemize}

\textcolor{black}{F}inally, it is worth noting that~\cite{gramacy2016modeling} proposed an augmented Lagrangian approach for solving the problem in~\eqref{eqn:1.1} in the non-noisy setting. However,  they did not propose the above constraint extrapolation technique. In our experiments in Section~\ref{sec:experiments}, we show that our constraint extrapolation approach significantly outperforms the approach in \cite{gramacy2016modeling} in simulations and real-world problems.  
\begin{algorithm}[t]
\caption{Stochastic Zeroth-Order Constraint Extrapolation Method (\texttt{SZO-ConEx})}
\begin{algorithmic}[1]
\Require $\nu_0>0$, $\nu > \mathbf{0}, (x^{(0)}, y^{(0)}), \{\gamma_t, \tau_t, \eta_t, \theta_t\}_{t \geqslant 0}, T$.
\State Set $(x^{(-1)}, y^{(-1)}) \gets (x^{(0)}, y^{(0)}), F_{\nu}(x^{(-1)}, \overline{\xi}^{(-1)}, \overline{u}^{(-1)}) \gets F_{\nu}(x^{(0)}, \overline{\xi}^{(0)},
 \overline{u}^{(0)})$, $\ell_F(x^{(-1)}) \gets \ell_F(x^{(0)})$.
\For{$t = 0, \dots, T-1$}
\State $s^{(t)} \gets (1 + \theta_t)\ell_F(x^{(t)}) - \theta_t\ell_F(x^{(t-1)})$.
\State $y^{(t+1)} \gets [y^{(t)} + \frac{1}{\tau_t}s^{(t)}]_+$.
\State $x^{(t+1)} \gets \textbf{prox}\bigg(G_{0,\nu_0}(x^{(t)},\xi_0^{(t)}, u_0^{(t)})+ \textstyle\sum_{i=1}^m G_{i, \nu_i}(x^{(t)},\xi_i^{(t)}, u_i^{(t)})y_{i}^{(t+1)}, x^{(t)}, \eta_t\bigg).$ 
\EndFor
\State \textbf{return} $\bar{x}_T = (\sum_{t=0}^{T-1}\gamma_t)^{-1}\sum_{t=0}^{T-1}\gamma_tx^{(t+1)}$.
\end{algorithmic}
\label{Algorithm1}
\end{algorithm}

\section{Main results}\label{sec:thms}
We now present our main results on the oracle complexity of \texttt{SZO-ConEX} algorithm. Recall the definition of the stochastic zeroth-order gradient estimators from~\eqref{eq:gradest}. At a high-level, the algorithm could be interpreted as using the constraint extrapolation method of~\cite{boob2019proximal} \textcolor{black}{for solving~\eqref{eq:1.5} with $\mathcal{L}(x,y)$ replaced by $\mathcal{L}_\nu(x,y)$ as defined in~\eqref{eq:zolagrangian}}, as the stochastic zeroth-order gradients used in Algorithm~\ref{Algorithm1} are essentially unbiased estimators of the smoothed functions $f_{\nu,i}$ (for $i \in [m]$). However, they have unbounded variance. Hence, the analysis of~\cite{boob2019proximal}, which is for the stochastic first-order setting under the assumption of unbiased stochastic gradient and uniformly bounded variance is not directly applicable. Furthermore, on the one hand as the smoothing parameters $\nu_i$ (for $i \in [m]$) tend to zero, $\mathcal{L}_{\nu}(x, y)$ converges to $\mathcal{L}(x, y)$ defined in~\eqref{eq:1.5}. However, on the other hand, the parameters $\nu_i$ are in the denominator of the stochastic zeroth-order gradient estimators (see~\eqref{eq:gradest}). Hence, we cannot let them tend to zero at any arbitrary rate. Picking the tuning parameters $\nu_i$ carefully to balance this tension and get the best possible oracle complexity forms the crux of our analysis. Finally, we also point out that general strategies for picking the smoothing parameters (as proposed in~\cite{beck2012smoothing} for dealing with non-smooth stochastic first-order optimization problems) are also not  directly applicable for analyzing stochastic zeroth-order algorithms and specialized approaches are often required -- we refer the reader to~\cite{duchi2015optimal, nesterov2017random,ghadimi2013stochastic, balasubramanian2018zeroth} for several related techniques for analyzing unconstrained stochastic zeroth-order optimization algorithms.

\subsection{Convex Setting}\label{sec:convex}
We first provide our theoretical results for the case when the functions $f_i$, for $i \in [m]$, are convex. We start by describing the measure of optimality we consider for solving~\eqref{eqn:1.1}.

\begin{definition}\label{def:epsconvex}
A point $\bar{x}$ is an $\epsilon$-approximately optimal solution in expectation, for~\eqref{eqn:1.1}, if it satisfies $\mathbb{E}[f_0(\bar{x}) - f_0^*] \leqslant \epsilon $ and $\mathbb{E}[\| [f(\bar{x})]_+ \|_2] \leqslant \epsilon,$ where $f_0^*$ is the optimal value of~\eqref{eqn:1.1} and the expectation is with respect to the randomness arising due to $\xi_i$ and $u_i$ across all iterations.
\end{definition}
The first part of the above definition corresponds to the standard optimality condition for the convex problem. The next part corresponds to constraint violation.  Our main result is described next. We define $M_X\coloneqq \sup_{x \in X} \|x\|$. Furthermore, we define $\sigma_\nu\coloneqq [\sigma_{1,\nu_1}, \cdots, \sigma_{m,\nu_m}]$, where $\sigma_{i,\nu_i}$, for $i \in [m]$ are as defined in Lemma~\ref{thm:gradestrate}, $\sigma_{X, f} \coloneqq (\sigma_{f, \nu}^2 + D_X^2\| \sigma_{\nu}\|_2^2)^{1/2}$  (where $\sigma_{f, \nu}^2$ is as defined in Lemma~\ref{lemma:zogradbounds}). 

\begin{theorem} \label{thm11}
Suppose the functions $f_i$, for $i\in[m]$, are convex and satisfy Assumptions~\ref{Assumption1},~\ref{Assumption3} and~\ref{Assumption2}. Define $\mathcal{H}_* \coloneqq (L_{f}D_X\| y^*\|_2 )/2$. Set $y_0 = \mathbf{0}$ and $\{\gamma_t, \theta_t, \eta_t, \tau_t\}$ in Algorithm~\ref{Algorithm1} according to the following: $\gamma_t = 1, \quad \eta_t = L_{0} + L_{f} + \eta,$ and $\theta_t = 1, \quad \tau_t = \tau$,
where 
\begin{align*}
    \eta  := &\max\Bigg\{ \frac{\sqrt{2T[\mathcal{H}_*^2 + \sigma_{0, \nu_0}^2 + 48\|\sigma_{\nu}\|_2^2]}}{D_X}, \frac{6\max\{2M_f, 4\|\sigma_{\nu}\|_2\}}{D_X}\Bigg\},  \\
    \tau  &:= \max\Bigg\{\sqrt{96T}\sigma_{X, f}, 2D_X\max\{M_f, 4\|\sigma_{\nu}\|_2\}\Bigg\}.
\end{align*}
Then, we have 
\begin{align}
\mathbb{E}[f_0(\bar{x}_T) - f_0(x^*)] 
 \leqslant &\frac{(L_{0} + L_{f})D_X^2 + \max\{12M_f, 24\|\sigma_{\nu}\|_2\}D_X}{T}\notag\\
 &+ \frac{1}{\sqrt{T}}\sqrt{2(\mathcal{H}_*^2 + \sigma_{0, \nu_0}^2 + 48\| \sigma_{\nu}\|_2^2)}D_X \nonumber \\
  &+ \frac{1}{\sqrt{T}}\left\{\frac{\sqrt{2} \zeta^2 D_X}{\sqrt{\mathcal{H}_*^2 + \sigma_{0, \nu_0}^2 + 48\|\sigma_{\nu}\|_2^2}} + \frac{\sqrt{3}\sigma_{X, f}}{\sqrt{2}} \right\}\notag \\
  &+[\nu_0^2L_0n + M_Xn(\textstyle\sum_{i=1}^m\nu_i^4L_i^2)^{1/2}], \label{eq:1.61}
\end{align}
and
\begin{align}
 \mathbb{E}[\| [f(\bar{x}_T)]_+ \|_2] \leqslant & \frac{1}{\sqrt{T}}\Bigg\{\left[12\sqrt{6}(\|y^*\|_2 + 1)^2 + \frac{13}{4\sqrt{6}}\right]\sigma_{X, f} \notag \\ 
&  + [\nu_0^2L_0n + M_Xn(\textstyle\sum_{i=1}^m\nu_i^4L_i^2)^{1/2}] \notag \\
     & + \sqrt{2}D_X\Bigg[\sqrt{\mathcal{H}_*^2 + \sigma_{0, \nu_0}^2 + 48\|\sigma_{\nu}\|_2^2} \notag\\&+ \frac{\zeta^2 + \mathcal{H}_*^2}{\sqrt{\mathcal{H}_*^2 + \sigma_{0, \nu_0}^2 + 48\|\sigma_{\nu}\|_2^2}}\Bigg]\Bigg\} \notag \\ & \hspace{-0.3in}+\frac{(L_{0} + L_{f})D_X^2 + \max\{12M, 24\|\sigma_{\nu}\|_2\}D_X\left(1 +(\|y^*\|_2 + 1)^2\right)}{T},  \label{eq:1.62}
\end{align}
where $ \zeta\coloneqq 2e\{\sigma_{0, \nu_0}^2 + \|\sigma_{\nu}\|_2^2(14\|y^*\|_2^2 + 75) + 2\sqrt{3}\|\sigma_{\nu}\|_2(2\mathcal{H}_* + \sigma_{0, \nu_0} + \sqrt{48}\| \sigma_{\nu}\|_2) + \sqrt{6}D_X^{-1}\|\sigma_{\nu}\|_2[\nu_0^2L_0n + M_Xn(\textstyle\sum_{i=1}^m\nu_i^4L_i^2)^{1/2}]\sqrt{T}\}^{1/2}.$ 
Hence, by choosing,  
\begin{align}
\nu_0 & \leqslant \min\left\{\frac{1}{\sqrt{2L_0n\sqrt{T}}}, \frac{2}{(n+3)^{3/2}}, \frac{1}{L_i(n+6)^{3/2}}\right\}\label{nunot} \\
\nu_i & \leqslant \min\Bigg\{\frac{2}{(n+3)^{3/2}}, \frac{1}{2M_{ i}\sqrt{(n+2)m}}, \frac{1}{\sqrt{L_in\sqrt{m}}},\frac{1}{\sqrt{2L_inM_X\sqrt{Tm}}}, \frac{1}{L_i(n+6)^{3/2}\sqrt{m}}\Bigg\},\label{eq:nui}
\end{align}
for $i\in[m]$, the number of calls to the stochastic zeroth-order oracle required by Algorithm~\ref{Algorithm1} to find an $\varepsilon$-approximately optimal solution of~\eqref{eqn:1.1} is of the order \textbf{$\mathcal{O}\left( \left((m+1)n\right)/\epsilon^2\right)$}.
\end{theorem}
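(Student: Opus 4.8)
The plan is to adapt the constraint extrapolation analysis of \cite{boob2019proximal} to the biased, unbounded-variance regime induced by Gaussian smoothing. The central object is the gap function associated to the saddle point problem in \eqref{eq:1.5} (with $\mathcal{L}$ replaced by $\mathcal{L}_\nu$), namely $Q(z, \bar z) := \mathcal{L}_\nu(\bar x, y) - \mathcal{L}_\nu(x, \bar y)$ evaluated at the running average. First I would write the per-iteration progress inequality for Steps 3--5: the prox step in Step 5 gives a three-point (Bregman) inequality bounding $\langle G_{0,\nu_0} + \sum_i G_{i,\nu_i} y_i^{(t+1)}, x^{(t+1)} - x\rangle$ in terms of $\eta_t(W(x,x^{(t)}) - W(x,x^{(t+1)}) - W(x^{(t+1)},x^{(t)}))$, while the dual ascent in Step 4 gives a matching inequality on the $y$-side involving $\tau_t$ and the extrapolated linearization $s^{(t)}$. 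The extrapolation term $s^{(t)} = (1+\theta_t)\ell_F(x^{(t)}) - \theta_t \ell_F(x^{(t-1)})$ is what lets the cross terms telescope after summation; I would carry the leftover $\theta_t(\ell_F(x^{(t)}) - \ell_F(x^{(t-1)}))$ terms and control them using the strong convexity buffer $-\eta_t W(x^{(t+1)}, x^{(t)})$, exactly as in the first-order template.

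The key departure from the first-order case is the error decomposition. I would split each stochastic zeroth-order gradient as $G_{i,\nu_i} = \nabla f_{i,\nu_i}(x^{(t)}) + \Delta_i^{(t)}$, where $\Delta_i^{(t)}$ is the zero-mean (given the past) estimation error, and then separately account for the \emph{smoothing bias} $\nabla f_{i,\nu_i} - \nabla f_i$. The variance of $\Delta_i^{(t)}$ is controlled by Lemma~\ref{thm:gradestrate} through $\sigma_{i,\nu_i}^2$, and the linearization-residual noise by Lemma~\ref{lemma:zogradbounds} through $\sigma_{f,\nu}^2$; these feed the quantities $\sigma_\nu$ and $\sigma_{X,f}$ appearing in $\eta, \tau, \zeta$. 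Collecting the martingale-difference terms $\sum_t \gamma_t \langle \Delta^{(t)}, x^{(t+1)} - x\rangle$ and taking expectation kills the linear-in-noise contribution, while the quadratic-in-noise contribution is absorbed by the $\eta_t$ buffer, yielding the $\sigma$-dependent terms in \eqref{eq:1.61}--\eqref{eq:1.62}. The smoothing bias, by contrast, does not average out: it contributes the deterministic additive term $\nu_0^2 L_0 n + M_X n (\sum_i \nu_i^4 L_i^2)^{1/2}$ that appears explicitly in both bounds, obtained by bounding $\|\nabla f_{i,\nu_i} - \nabla f_i\|$ via the standard $O(\nu_i n)$ Gaussian-smoothing estimate from \cite{nesterov2017random}.

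To obtain the constraint-violation bound \eqref{eq:1.62} I would use the standard device of instantiating the gap inequality at a shifted dual point: choosing $y = y^* + \|[f_\nu(\bar x_T)]_+\|_2^{-1}[f_\nu(\bar x_T)]_+$ (with the usual care near zero) converts the saddle gap into a lower bound proportional to $\|[f_\nu(\bar x_T)]_+\|_2$, which after rearrangement and using $\mathbb{E}[\|y^{(t)}\|]$ bounds gives the claimed $O(1/\sqrt{T})$ rate; the factor $(\|y^*\|_2+1)^2$ tracks the radius of the dual region actually visited. Finally, translating the $f_\nu$-violation back to the $f$-violation costs another smoothing-bias term, which is why the same additive $\nu$-term reappears. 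The complexity statement then follows by plugging the choices \eqref{nunot}--\eqref{eq:nui}: these are engineered so that every $\nu$-dependent term is $O(1/\sqrt{T})$ or smaller, leaving a net $O(1/\sqrt{T})$ rate for both suboptimality and violation; since each iteration makes $2(m+1)$ oracle calls, setting $T = \Theta((m+1)n/\epsilon^2)$ gives the stated oracle complexity after verifying the $n$-dependence hidden in $\sigma_{i,\nu_i}^2 = O(n)$ and $\sigma_{f,\nu}^2 = O(n)$.

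The hard part will be the simultaneous balancing in the error decomposition: because the $\nu_i$ sit in the denominator of the estimator while the bias scales with $\nu_i n$, the variance proxies $\sigma_{i,\nu_i}^2, \sigma_{f,\nu}^2$ blow up as $\nu_i \to 0$, so one cannot simply send $\nu_i \to 0$. The delicate step is verifying that the step-size formulas for $\eta$ and $\tau$ (which themselves depend on $\sigma_\nu, \sigma_{X,f}$ and hence on $\nu$) remain consistent with the constraints \eqref{nunot}--\eqref{eq:nui} and still produce the $O((m+1)n/\epsilon^2)$ scaling; tracking the $(m+1)$ and $n$ factors through $\zeta$, $\sigma_{X,f}$, and the $\max\{\cdot,\cdot\}$ definitions without letting any term dominate is the genuinely technical obstacle, and I expect it to require the careful case analysis implicit in the $\max$-based definitions of $\eta$ and $\tau$.
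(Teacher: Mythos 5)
Your overall architecture matches the paper's: the gap function $Q_\nu$ for the smoothed Lagrangian, the three-point inequalities for the primal prox step and dual ascent, telescoping via the extrapolation term, the split into zero-mean estimation error plus deterministic smoothing bias (the paper's Lemma~\ref{lemma4} gives exactly the additive term $\nu_0^2L_0n + M_Xn(\sum_i\nu_i^4L_i^2)^{1/2}$ by comparing $Q$ and $Q_\nu$), and the shifted dual point $\hat y = (\|y^*\|_2+1)[f(\bar x_T)]_+/\|[f(\bar x_T)]_+\|_2$ for the infeasibility bound. The $\nu$-balancing you flag at the end is indeed part of the work.

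However, there is a genuine gap at the step where you claim ``the quadratic-in-noise contribution is absorbed by the $\eta_t$ buffer, yielding the $\sigma$-dependent terms.'' The primal gradient error is $\delta_t^G = \delta_{t,0}^G + \sum_{i=1}^m y^{(i)}_{t+1}\delta_{t,i}^G$, so its second moment is bounded by $2\sigma_{0,\nu_0}^2 + 4\|\sigma_\nu\|_2^2(\|y^*\|_2^2 + \mathbb{E}\|y_{t+1}-y^*\|_2^2)$ --- and the dual iterates $y_{t+1}$ are only projected onto the nonnegative orthant, not onto a bounded set, so there is no a priori uniform bound on $\mathbb{E}[\|\delta_t^G\|_*^2]$. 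You cannot simply absorb it. The only available control on $\mathbb{E}\|y_{t+1}-y^*\|_2^2$ comes from the gap inequality itself (instantiated at the saddle point, using $Q(z_{i+1},z^*)\geqslant 0$), and that bound in turn contains $\sum_{i\leqslant t}\mathbb{E}[\|\delta_i^G\|_*^2]$. This circularity is the central new difficulty relative to the first-order analysis, and the paper resolves it in Lemma~\ref{lemma10} by deriving the recursion $\mathbb{E}[\|\delta_t^G\|_*^2] \leqslant R_1 + R_2\sum_{i=0}^{t-1}\mathbb{E}[\|\delta_i^G\|_*^2]$ (valid only after verifying the contraction condition $96\|\sigma_\nu\|_2^2/(\tau_t(\eta_t-L_0-L_f)) \leqslant 1/2$, which is what forces the specific $\max$-based forms of $\eta$ and $\tau$), and then solving it with a discrete Gronwall-type lemma to get $\mathbb{E}[\|\delta_t^G\|_*^2]\leqslant R_1(1+2/T)^t \leqslant R_1 e^2 =: \zeta^2$. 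The factor $e$ in the definition of $\zeta$ is the footprint of this argument; without it your plan does not produce the stated constants, and more importantly the absorption step you describe would fail for want of a bound on the dual trajectory.
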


\begin{remark}
\textcolor{black}{Although the parameter settings of Theorem~\ref{thm11} and the right hand side of~\eqref{eq:1.61} and~\eqref{eq:1.62} appear complicated to parse, the important take away message is that the right hand side of~\eqref{eq:1.61} and~\eqref{eq:1.62} are of the order $\mathcal{O}(1/\sqrt{T})$ which leads to the oracle complexity described above.} Furthermore, the order of $\epsilon$ in the oracle complexity is of the same order as that in~\cite{boob2019proximal} for the stochastic first-order setting. The $(m+1)n$ factor in the oracle complexity appears because we are required to estimate $m+1$ gradient vectors, each of dimension $n$. The dimension dependency is unavoidable even in the unconstrained setting, as showed via lower bounds in~\cite{jamieson2012query, duchi2015optimal}. For a fixed dimensionality $n$, the oracle complexity in the zeroth-order setting is linear in the number of constraints $m$. 
\end{remark}

\begin{remark} \label{rem:worserate}
A word is in order regarding the choice of the tuning parameters $\nu_i$, $i \in [m]$ in \eqref{eq:nui}. If one follows the standard analysis for selecting the tuning parameters for stochastic zeroth-order algorithms, which are predominantly developed for unconstrained problems, the $m$  related factors appearing in the choice of $\nu_i$ would be missed. This subsequently would lead to an increased dependency of the oracle complexity on $m$, instead of the linear dependency that we obtain now. A main part of our proof involves obtaining the choice of the smoothing parameters $\nu_i$ as in~\eqref{eq:nui}, that helps us to obtain oracle complexity as stated in Theorem~\ref{thm11}.
\end{remark}

\subsection{\textcolor{black}{Proximal-point based Meta-Algorithm for the Nonconvex Setting}}\label{sec:nonconvex}
We now consider the case when objective function $f_0$, and the constraint functions $f_1,\ldots, f_m$ are nonconvex. \textcolor{black}{In this case,~\cite{boob2019proximal}, analyzed a two-step meta-algorithm, which is based on the standard proximal-method; see, for example~\cite{drusvyatskiy2017proximal} for a survey.}  

\textcolor{black}{The basic idea behind the method (as stated in Algorithm~\ref{Algorithm2}) consists of the following two steps: (i) construct a sequence of convex relaxations for the nonconvex problem, and (ii) leverage the algorithm developed for the convex setting. Given our Algorithm~\ref{Algorithm1}, we leverage this framework to solve~\eqref{eqn:1.1} in the nonconvex setting. }
\begin{algorithm}[t]
\caption{Meta-Algorithm for Nonconvex Setting}
\label{Algorithm2}
\begin{algorithmic}[1]
\Require Input $x_0$, parameters $\mu_o$, $\mu_i$, $i \in [m]$.
\For{$k = 1, \dots, K$}
\State For  $i \in [m]$, set:
 \begin{align*} f_0(x; x_{k-1})& \coloneqq f_0(x) + 2\mu_0W(x, x_{k-1}),\\
f_i(x; x_{k-1}) &\coloneqq f_i(x) + 2\mu_iW(x, x_{k-1}).
\end{align*}
\State  Obtain an $\epsilon$-approximately optimal solution to the problem:
\begin{align}
     \arg\min_{x \in X} f_0(x; x_{k-1})  \quad \text{s.t.} \quad f_i(x; x_{k-1}) \leqslant 0, \quad i \in [m].
\end{align}
by using \texttt{SZO-ConEx} in Algorithm~\ref{Algorithm1}. Denote it by $x_k$, for $k=1,\ldots, K$.
\EndFor
\State Randomly choose $\hat{k} \in \{1,\ldots, K \}$
\State \textbf{return} $x_{\hat{k}}$.
\end{algorithmic}
\end{algorithm}

We first define the exact Karush-Kuhn-Tucker (KKT) condition for~\eqref{eqn:1.1}~as follows. For a convex set $X$, we denote \textcolor{black}{its} interior as $\text{int} X$, the normal cone at $x \in X$ as $N_X(x)$, and its dual cone as $N_X^*(x)$. \textcolor{black}{For convenience, we recall the definition of normal cone: For convex set $X$, we have $N_X^*(x)\coloneqq \{ v\in \mathbb{R}^n: \langle y, z-x  \rangle \leq 0,~\text{for all}~z \in X \}$; see~\cite[Part I and II]{rockafellar2015convex} for additional properties and examples}.  Let $\oplus$ denote the Minkowski sum of two sets $A, B \subset \mathbb{R}^n$, \textcolor{black}{defined as $A \oplus B = \left\{ a+b: a\in A~\text{and}~b \in B \right\}$}. We refer to the distance between two sets $A, B \subset \mathbb{R}^n$ as $d(A, B) \coloneqq \inf_{a \in A, b \in B}\|a - b\|$.
\begin{definition}\label{def:epsnonconvex}
We say that $x^* \in X$ is a critical KKT point of~\eqref{eqn:1.1} if $f_i(x^*) \leqslant 0$ and $\exists y^*\coloneqq [y^*_1, \ldots, y^*_m]^T \geqslant \mathbf{0}$ such that 
\begin{align*}
   & y^{*}_i f_i(x^*) = 0, \quad  i \in [m],\\
    d(\nabla f_0(x^*) &+ \textstyle\sum_{i=1}^my^{*}_i\nabla f_i(x^*) \oplus N_X(x^*), \mathbf{0})  = 0.
\end{align*}
\end{definition}

The parameters $\{y^*_i\}_{i \in [m]}$ are called \textit{Lagrange multipliers}. For brevity, we use the notation $y^*$ and $[y^{*}_1, \ldots, y^{*}_m]^T$ interchangeably. With this definition, we also have the following approximate KKT condition which is the standard approximate optimality condition for solving~\eqref{eqn:1.1} in the nonconvex setting. 
\begin{definition}
We say that a point $\hat{x} \in X$ is an $(\varepsilon, \delta)$-KKT point in expectation for~\eqref{eqn:1.1} if there exists $(\bar{x},\bar{y})$ such that $f(\bar{x}) \leqslant \mathbf{0}, \bar{y} \geqslant \mathbf{0}$ and 
\begin{align*}
    \mathbb{E}[\textstyle\sum_{i=1}^m|\bar{y}_i f_i(\bar{x})|] & \leqslant \varepsilon, \mathbb{E}[\| \bar{x} - \hat{x} \|^2 ] \leqslant \delta\\
    \mathbb{E} [(d(\nabla f_0(\bar{x}) + \textstyle\sum_{i=1}^m &\bar{y}_i\nabla f_i(\bar{x}) \oplus N_X(\bar{x}), \mathbf{0}))^2] \leqslant \varepsilon.
\end{align*}
\end{definition}

\begin{proposition}\label{prop:nonconvex}
Consider solving~\eqref{eqn:1.1} with both the objective and the constraint function being nonconvex and satisfying Assumptions~\ref{Assumption1},~\ref{Assumption3} and~\ref{Assumption2}. Then, by running  Algorithm~\ref{Algorithm2} with $K = \mathcal{O}(1/\epsilon)$, we obtain $(\epsilon, 2\epsilon/2\mu_0 \mu_{\max})$-KKT point, where  $\mu_{\max}\coloneqq \max\{\mu_1, \ldots, \mu_m\}$. Hence, the total number of calls to the stochastic zeroth-order oracle is given by $\mathcal{O}\left( \left((m+1)n\right)/\epsilon^3 \right)$.
\end{proposition}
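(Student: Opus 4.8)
The plan is to read Algorithm~\ref{Algorithm2} as an \emph{inexact proximal-point scheme} in the spirit of the weakly-convex / proximal-point framework (cf.~\cite{boob2019proximal, drusvyatskiy2017proximal}), and to feed the per-subproblem convex oracle complexity of Theorem~\ref{thm11} into an outer loop of length $K=\mathcal{O}(1/\epsilon)$. First I would show that the regularized subproblems solved in Step~3 are convex, so that Theorem~\ref{thm11} applies to each of them. Since each $F_i$, and hence $f_i$, has $L_i$-Lipschitz gradient (Assumption~\ref{Assumption3}), $f_i$ is $L_i$-weakly convex with respect to $\|\cdot\|$; because $W(\cdot, x_{k-1})$ is $1$-strongly convex (Definition~\ref{def:www}), the term $2\mu_i W(x, x_{k-1})$ is $2\mu_i$-strongly convex, so taking $\mu_i \geqslant L_i/2$ and $\mu_0 \geqslant L_0/2$ makes $f_i(\cdot; x_{k-1})$ and $f_0(\cdot; x_{k-1})$ (strongly) convex while preserving gradient-Lipschitzness. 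Feasibility transfers across iterations since $W(x_{k-1}, x_{k-1}) = 0$, so each subproblem is well posed and Theorem~\ref{thm11} yields an $\epsilon$-approximately optimal solution $x_k$ (in the sense of Definition~\ref{def:epsconvex}) using $\mathcal{O}((m+1)n/\epsilon^2)$ oracle calls.

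The heart of the argument is the outer proximal-point analysis. Let $\bar{x}_k$ denote the \emph{exact} solution of subproblem $k$ with multipliers $\bar{y}_k \geqslant \mathbf{0}$. Its KKT system reads $\mathbf{0} \in \nabla f_0(\bar{x}_k) + \sum_{i=1}^m \bar{y}_{k,i}\nabla f_i(\bar{x}_k) + \big(2\mu_0 + \sum_{i=1}^m \bar{y}_{k,i}\,2\mu_i\big)\nabla W(\bar{x}_k, x_{k-1}) + N_X(\bar{x}_k)$ together with $\bar{y}_{k,i}\big[f_i(\bar{x}_k) + 2\mu_i W(\bar{x}_k, x_{k-1})\big] = 0$. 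Two clean facts follow. First, the constraint of the subproblem forces $f_i(\bar{x}_k) \leqslant -2\mu_i W(\bar{x}_k, x_{k-1}) \leqslant 0$, so $\bar{x}_k$ is automatically feasible for the \emph{original} problem, meeting the requirement $f(\bar x)\leqslant \mathbf{0}$ in Definition~\ref{def:epsnonconvex}. Second, comparing the KKT system with Definition~\ref{def:epsnonconvex} and using $\|\nabla W(\bar{x}_k, x_{k-1})\|_* \leqslant L_\omega\|\bar{x}_k - x_{k-1}\|$, the stationarity residual of the original problem at $\bar{x}_k$ is at most a constant times $\|\bar{x}_k - x_{k-1}\|$, while complementary slackness gives $\sum_{i=1}^m|\bar{y}_{k,i}f_i(\bar{x}_k)| = 2\sum_{i=1}^m \bar{y}_{k,i}\mu_i W(\bar{x}_k, x_{k-1}) \leqslant 2\mu_{\max}\|\bar{y}_k\|_1 W(\bar{x}_k, x_{k-1})$, which is controlled by $\|\bar{x}_k - x_{k-1}\|^2$ provided the multipliers are uniformly bounded (guaranteed by the assumed existence of $y^*$, i.e.\ a constraint qualification). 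Thus everything reduces to showing the displacements are small on average.

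For the displacement bound I would set up a telescoping descent using the $2\mu_0$-strong convexity of the objective subproblem: the optimal value of subproblem $k$ provides a potential $\Phi$ satisfying a one-step inequality $\Phi(x_k) \leqslant \Phi(x_{k-1}) - c\,\|\bar{x}_k - x_{k-1}\|^2 + (\text{inexactness})$ for some $c>0$. Summing over $k=1,\ldots,K$ telescopes $\Phi$ (bounded since $X$ is compact) and yields $\tfrac{1}{K}\sum_{k=1}^K \mathbb{E}\|\bar{x}_k - x_{k-1}\|^2 = \mathcal{O}(1/K)$; the uniformly random index $\hat{k}$ (Step~4) then gives $\mathbb{E}\|\bar{x}_{\hat k} - x_{\hat k - 1}\|^2 = \mathcal{O}(1/K)$. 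Setting $K = \mathcal{O}(1/\epsilon)$ drives both the squared stationarity residual and the complementary-slackness residual below $\epsilon$. The proximity guarantee $\mathbb{E}\|\bar{x}_{\hat k} - x_{\hat k}\|^2 \leqslant \delta$ between the exact subproblem solution $\bar{x}_{\hat k}=\bar x$ and the returned iterate $x_{\hat k}=\hat x$ then follows from the $2\mu_0$-strong convexity of the subproblem combined with the $\epsilon$-optimality of $x_{\hat k}$, producing $\delta = \mathcal{O}(\epsilon/(\mu_0\mu_{\max}))$. Multiplying the outer count $K=\mathcal{O}(1/\epsilon)$ by the inner cost $\mathcal{O}((m+1)n/\epsilon^2)$ gives the stated $\mathcal{O}((m+1)n/\epsilon^3)$.

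The main obstacle I anticipate is propagating the two-sided inexactness of the inner solves through the outer recursion. Unlike the exact proximal-point method, Theorem~\ref{thm11} only guarantees $\epsilon$-accuracy \emph{in expectation} in both objective gap and constraint violation, and the zeroth-order estimators are unbiased only for the \emph{smoothed} gradients $\nabla f_{i,\nu_i}$, not $\nabla f_i$; consequently $x_k$ is neither exactly optimal nor exactly feasible for subproblem $k$. I will therefore need to (i) transfer the KKT residuals evaluated at the exact $\bar{x}_k$ to the computed $x_{\hat k}$ through the $\delta$-proximity bound, (ii) absorb both the inner-accuracy slack and the constraint-violation terms into the telescoping sum without degrading the $\mathcal{O}(1/K)$ rate, which requires matching the inner tolerance to $\epsilon$, and (iii) verify that the smoothing-bias terms $[\nu_0^2 L_0 n + M_X n(\sum_i \nu_i^4 L_i^2)^{1/2}]$ appearing in~\eqref{eq:1.61}--\eqref{eq:1.62} remain lower-order under the choices~\eqref{nunot}--\eqref{eq:nui}, so they do not inflate the per-subproblem cost.
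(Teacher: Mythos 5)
Your proposal is essentially the paper's own argument: the paper establishes Proposition~\ref{prop:nonconvex} by directly invoking Theorem~\ref{thm11} together with Corollary~3.19 of \cite{boob2019proximal} (the inexact proximal-point meta-analysis), and your outline reconstructs exactly that reduction --- convexify each subproblem via $2\mu_i W(\cdot,x_{k-1})$ with $\mu_i \geqslant L_i/2$, telescope the displacements $\|\bar{x}_k - x_{k-1}\|^2$, select a random index $\hat{k}$, and multiply the outer count $K=\mathcal{O}(1/\epsilon)$ by the per-subproblem cost $\mathcal{O}((m+1)n/\epsilon^2)$ from Theorem~\ref{thm11}. The one point to tighten is the uniform boundedness of the subproblem multipliers $\bar{y}_k$, which does not follow from the existence of $y^*$ for the original problem but requires a constraint qualification (e.g., MFCQ at limit points or strict feasibility of the regularized subproblems), as assumed in the cited corollary.
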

The proof of the above proposition follows immediately by Theorem~\ref{thm11} and Corollary 3.19 from~\cite{boob2019proximal} and is hence omitted. The parameters $\mu_0$ and $\mu_i$, $i\in[m]$ in Algorithm~\ref{Algorithm2} \textcolor{black}{are set according to} the desired level of accuracy based on Proposition~\ref{prop:nonconvex}. To the best of our knowledge, we are not aware of a non-asymptotic result on the oracle complexity of stochastic zeroth-order optimization with stochastic zeroth-order functional constraints, in both the convex and nonconvex settings.

\subsection{\textcolor{black}{Detailed Comparison to~\cite{boob2019proximal}}}
\textcolor{black}
{In this subsection, we highlight the main differences between our work and \cite{boob2019proximal}. As mentioned previously, our methodological and theoretical results builds upon the work of~\cite{boob2019proximal}.} 
\begin{itemize}
    \item \textcolor{black}{\textbf{Methodological:} At a methodological level, our work focuses on the case when we only have noisy function evaluations, whereas~\cite{boob2019proximal} focus on the case when we have access to noisy gradients. To deal with this, we use the Gaussian smoothing based zeroth-order gradient estimator in combination with the constraint extrapolation technique from~\cite{boob2019proximal}.}
    \item \textcolor{black}{\textbf{Biased gradients:} The use of the Gaussian smoothing based zeroth-order gradient estimator leads to stochastic gradients that are biased. Although~\cite{boob2019proximal} consider noisy gradients, they assume their stochastic gradients are unbiased. This complicates the analysis of the zeroth-order setting we work with.}
    \item \textcolor{black}{\textbf{Non-uniform variance:} Apart from the unbiased stochastic gradient assumption, \cite{boob2019proximal} require the variance of their stochastic gradient to be \emph{uniformly bounded} over the entire parameter space. However, the Gaussian smoothing based gradient estimator does not satisfy this assumption. A major technical part of our analysis involves dealing with stochastic gradients that are not uniformly bounded.}
    \item \textcolor{black}{\textbf{Smoothing parameters}: Our method requires dealing with the additional tuning parameters ($\nu_i$'s) that determine the level of smoothing in the zeroth-order gradient estimator. Dealing with this requires a careful analysis, as otherwise one would end up with worser oracle complexity than we have established in this work; see Remark~\ref{rem:worserate} for details. In contrast, \cite{boob2019proximal} do not require dealing with any tuning parameters for their stochastic gradient, due to their generic set of assumptions.}
    \item \textcolor{black}{\textbf{Experiments:} \cite{boob2019proximal} do not provide any experimental verification of their algorithm. In contrast, in Section~\ref{sec:experiments} that follows, we provide a detailed experimental evaluation, comparing to the existing state-of-the-art methods for constrained zeroth-order optimization, and demonstrate the advantages of the proposed approach.}
\end{itemize}

\section{Experimental Results}\label{sec:experiments} 
\textcolor{black}{
We compare the performance of our algorithm (Algorithm~\ref{Algorithm1}) with the following widely used algorithms for constrained zeroth-order optimization.  }
\begin{itemize}
    \item \textcolor{black}{\texttt{ALBO} method by~\cite{gramacy2016modeling}: This method takes a hybrid approach for constrained zeroth-order optimization, based on combining Bayesian optimization (i.e.,  Gaussian process based approaches) with Augmented Lagrangian methods. Specifically, the objective function of Augmented Lagrangian (which is similar in spirit to~\eqref{eq:1.5}) is estimated using Gaussian process priors. This method has various tuning parameters which makes the implementation a bit difficult. In fact,~\cite{gramacy2016modeling} do not provide the full implementation details and mention that ``\emph{many specifics have been omitted for space considerations}". We use the implementation provided in~\cite{gramacy2016lagp} as recommended by~\cite{gramacy2016modeling}.  }  
    \item \textcolor{black}{\texttt{Slack-AL} method by~\cite{picheny2016bayesian}: This method builds upon the~\texttt{ALBO} method and is also a hybrid method. Specifically, a particular step in estimating the objective function using Gaussian process technique (referred to as the Expected-Improvement step) is avoided by using slack variables. Similar to previous mehtod, we use the implementation provided in~\cite{gramacy2016lagp}.}
    \item \textcolor{black}{\texttt{ADMMBO} method by~\cite{ariafar2019admmbo}: This method is also a hybrid method that uses Bayesian optimization methods. However, they use an ADMM-based approach to solve the augmented Lagrangian problem. We follow the recommendation in Section 5.1 of~\cite{ariafar2019admmbo} for the implementation.} 
    \item \textcolor{black}{\texttt{PESC} method by~\cite{hernandez2015predictive}: This is a purely Bayesian optimization method that uses predictive entropy search for solving constrained zeroth-order optimization methods. As mentioned in~\cite{hernandez2015predictive}, ``\emph{One disadvantage of PESC is that it is relatively difficult to implement}". Furthermore, all the implementation details are not provided in detail in~\cite{ariafar2019admmbo}. Hence, we follow the implementation provided in~\cite{ariafar2019admmbo} for our experiments. 
    }
\end{itemize}

Compared to the above methods, our algorithm comes with a theoretical guarantee for setting the various tuning parameters of the proposed algorithm.  
 
We first report simulation experiments on: (i)  the oracle complexity of \texttt{SZO-ConEX} on 2 different test case objective and constraint functions, and (ii) the effect of the smoothing parameters (corresponding to the zeroth-order gradient estimation process) on the oracle complexity. For our experiments, we consider the following optimization problem (termed as Quadratically Constrained Quadratic Programing (QCQP) in the literature) where the objective function and the constraint function are quadratic functions:
\begin{align*}
\min_{x \in \mathbb{R}^n}~& f_0(x)\coloneqq x^\top A_0 x + b_0^\top x +c_0\\
 \text{such that}~& f_1(x)\coloneqq  x^\top A_1 x + b_1^\top x +c_1 \leqslant 1\color{black}{.}
\end{align*}
Here, $A_0,A_1\in\mathbb{R}^{n \times n}$, $b_0, b_1 \in \mathbb{R}^n$, and $c_0,c_1 \in \mathbb{R}$. When the matrices $A_0,A_1\in\mathbb{R}^{n \times n}$ are further assumed to be symmetric and positive semidefinite, the above problem is a convex optimization problem with convex constraints. In the general case, nonconvex QCQPs form a rich class of optimization problems. For example, every polynomial optimization problem with polynomial constraint could be turned into a nonconvex QCQP at the expense of increasing the \textcolor{black}{number of the optimization variables}~\textcolor{black}{\citep{d2003relaxations}}. Furthermore, it is also known that it is NP-hard to find global minimizers of nonconvex QCQP problem in the worst case.

\textbf{Convex setting:} We first consider the convex setting. Here, we set $A_0$ and $A_1$ to be random but fixed symmetric positive semidefinite matrices. Similarly $b_0, b_1, c_0$ and $c_1$ were generated randomly but fixed. Hence, the problem instance is fixed. In our experiments, we only use (noisy) function evaluations of both the objective and constraint functions. We used standard normal distribution and student $t$-distribution with degrees of freedom 5 for the noise in the function evaluations. For Algorithm~\ref{Algorithm1}, $\theta_t$ was set to 1 based on the theoretical result. Furthermore, $\tau$ and $\eta$, the parameters corresponding to the ascent step and the descent step were set based on \textcolor{black}{trial} and error to achieve the best performance. We remark that one could potentially use principled approaches like line-search for setting the step-size parameters~\textcolor{black}{\citep{berahas2019global}}. As we are working in the zeroth-order setting, in our experimentation we provide additional attention to the smoothing parameters ($\nu_0$ and $\nu_1$) corresponding to the zeroth-order gradient estimators . We set them both to $0.05$, $0.1$ and $2$ and report our performance. 

In figure~\ref{fig:bigsimconvex}, we report the function value difference (corresponding to Theorem~\ref{thm11})  versus number of calls to the (noisy) zeroth-order oracle, for various algorithms and our algorithm with the three choices of smoothing parameters. We work with dimensions $n=200$ and $n=500$ for our problem. Note here that it is easy to obtain the function value at the optimal solution for convex QCQP by using standard solvers (we use \texttt{cvxpy} to calculate it). The curves in figure~\ref{fig:bigsimconvex} correspond to average over 100 \textcolor{black}{trials}. We notice that the performance of our algorithm is uniformly better than the compared algorithms in terms of number of function calls required to obtain a prescribe accuracy. Furthermore, we notice that our algorithm is robust to the choice of smoothing parameters: as long as it is small enough, we have fast convergence, but the iterates diverge when the smoothing parameter value is large. 
\begin{figure*}[t]
\centering
\includegraphics[scale=0.41]{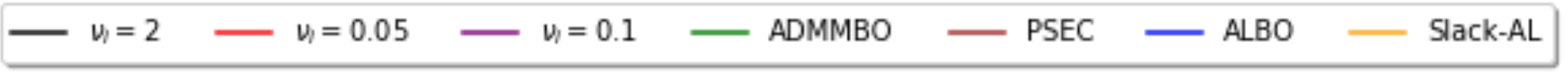}
\includegraphics[scale=0.48]{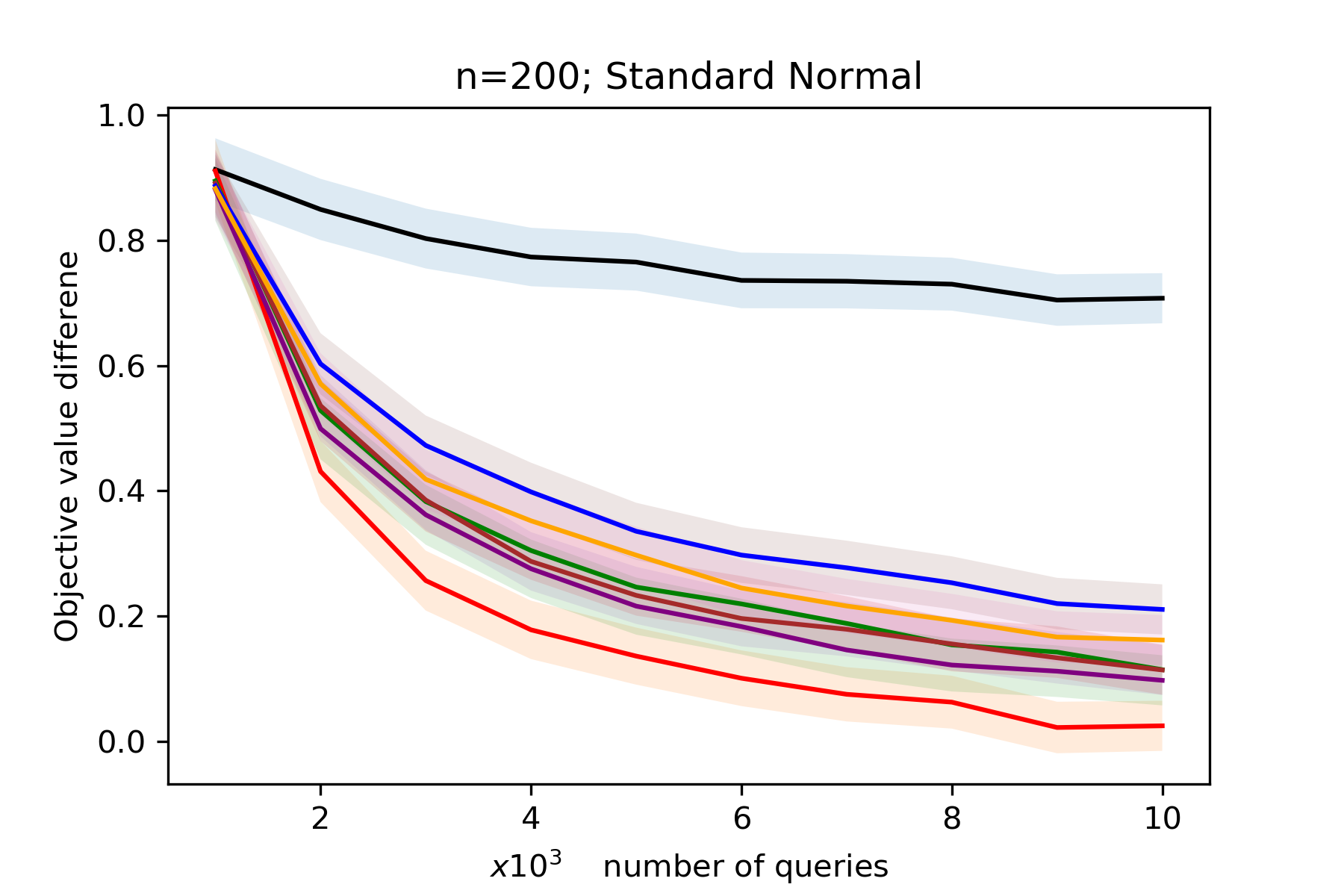}
\includegraphics[scale=0.48]{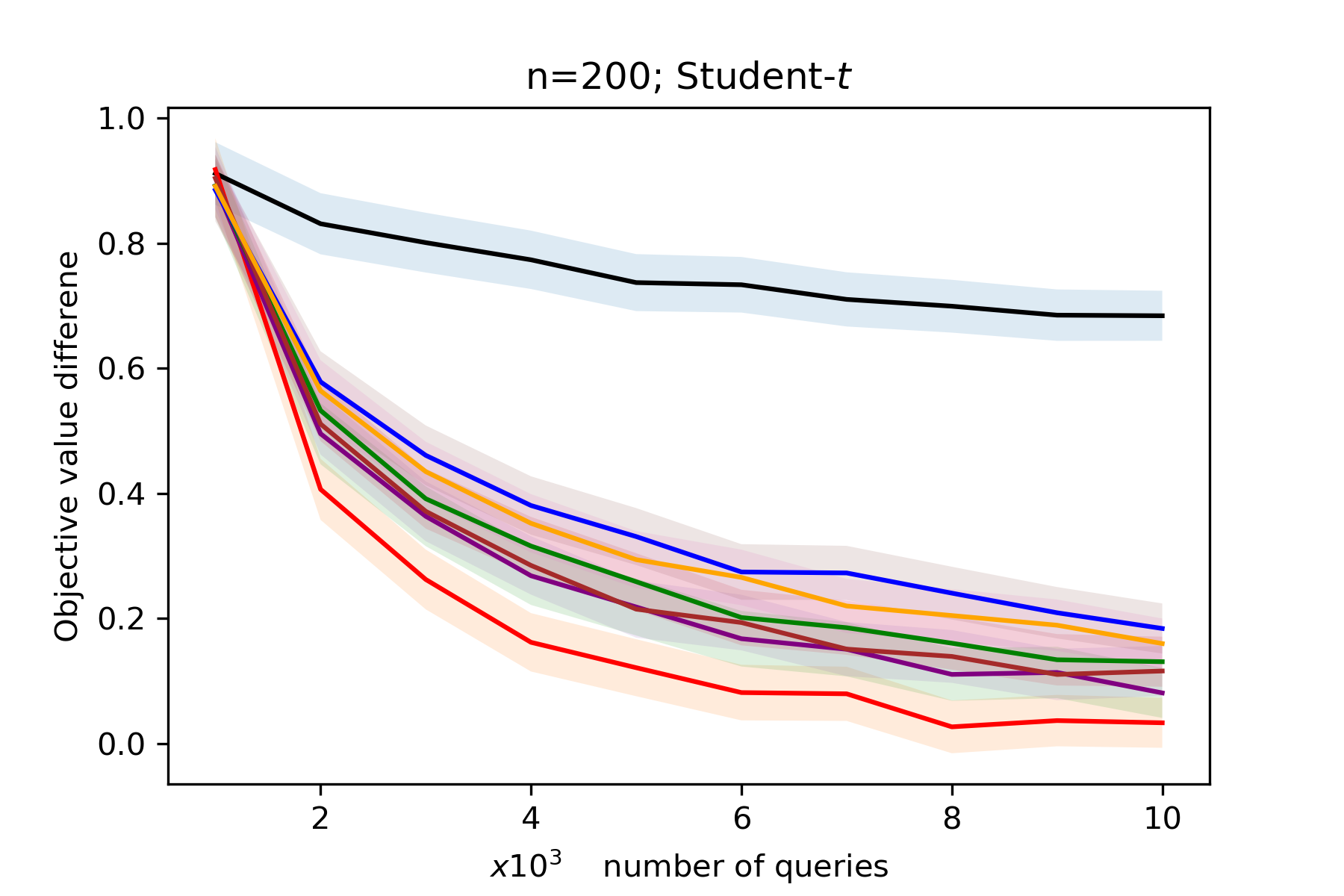}\\
\includegraphics[scale=0.48]{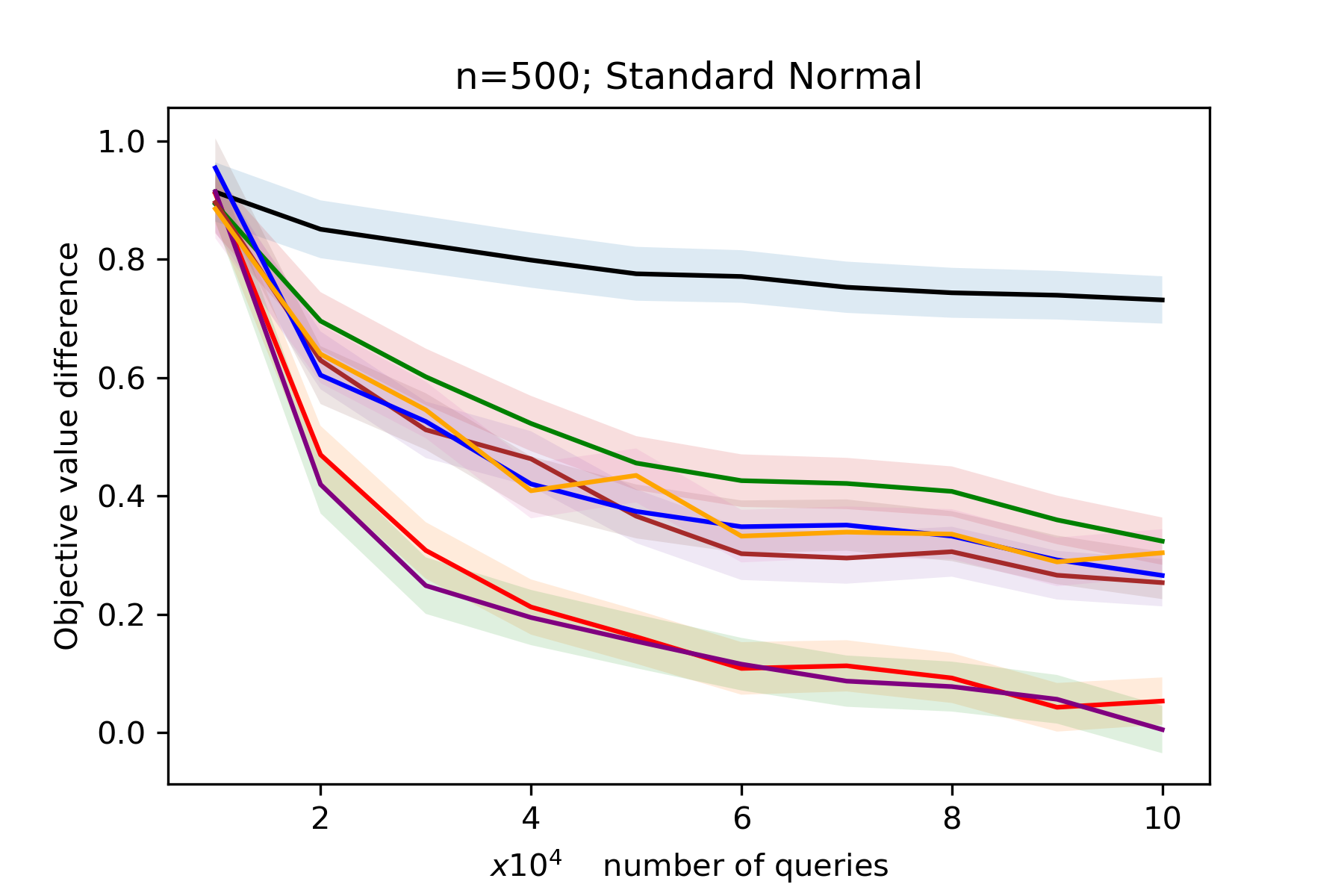}
\includegraphics[scale=0.48]{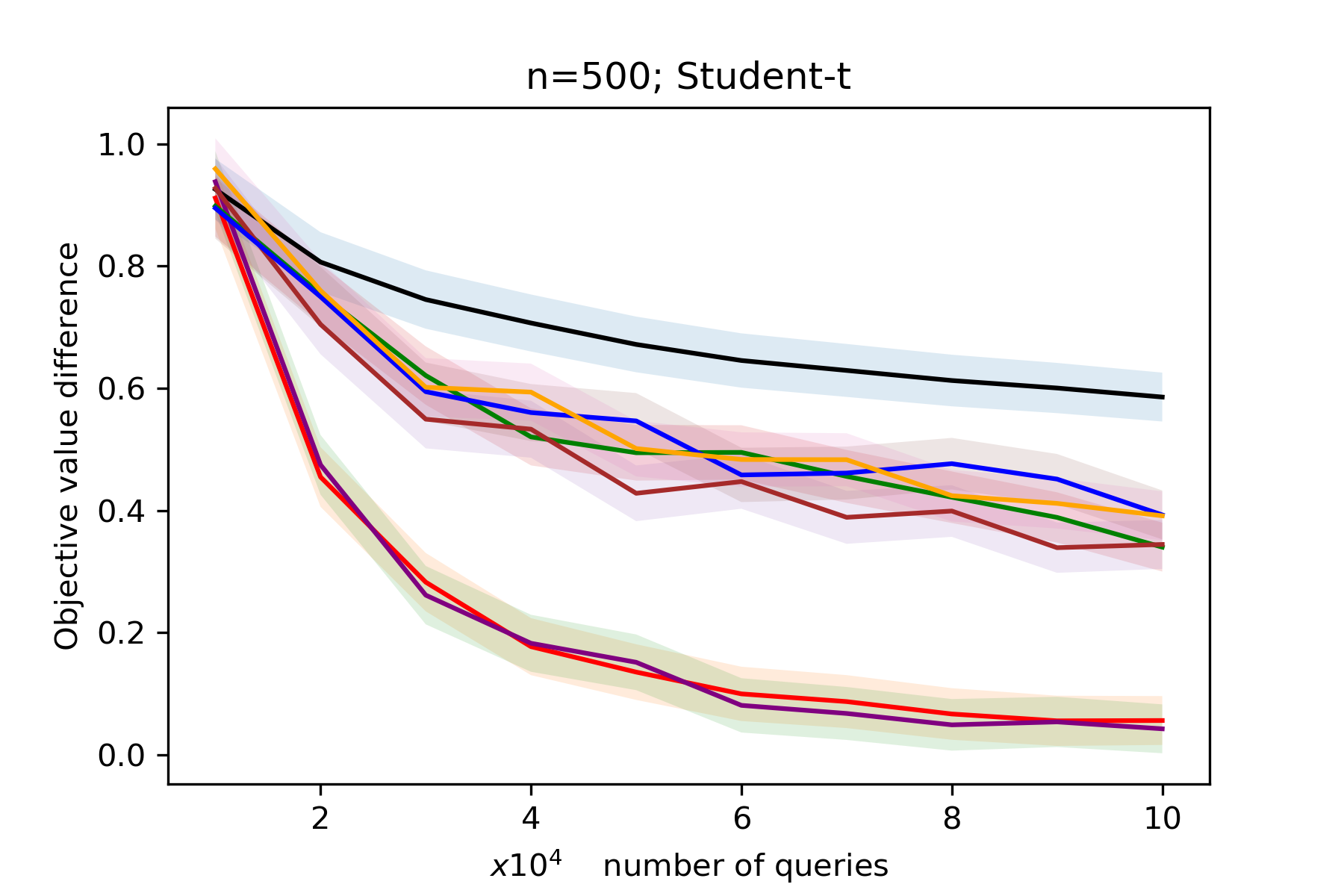}
\caption{Performance comparison on simulation experiment: Plot of number of queries versus objective value difference. The plots represent average curves over 100 trials and the shaded region corresponds to the standard errors. In the legend the \textcolor{black}{curves} corresponding to $\nu_i$ correspond to \texttt{SZO-ConEX} algorithm.}
\label{fig:bigsimconvex}
\end{figure*}

\textbf{Nonconvex setting:} We now list the changes we make for the nonconvex setting. First, while the matrices are still random but fixed, we make them non-positive-definite. Furthermore, for Algorithm~\ref{Algorithm2}, we set $K=50$. In figure~\ref{fig:bigsimnc} (bottom two rows), we report the norm of the gradient of the objective function (corresponding to Theorem~\ref{prop:nonconvex}) versus number of calls to the (noisy) zeroth-order oracle, for various algorithms and our algorithm with the three choices of smoothing parameters. The curves in figure~\ref{fig:bigsimnc} correspond to average over 100 \textcolor{black}{trials}. We notice that  similar to the convex case, the performance of our algorithm is uniformly better than the compared algorithms in terms of number of function calls required to obtain a prescribe accuracy. 

\begin{figure*}[t]
\centering
\includegraphics[scale=0.41]{legend.png}
\includegraphics[scale=0.48]{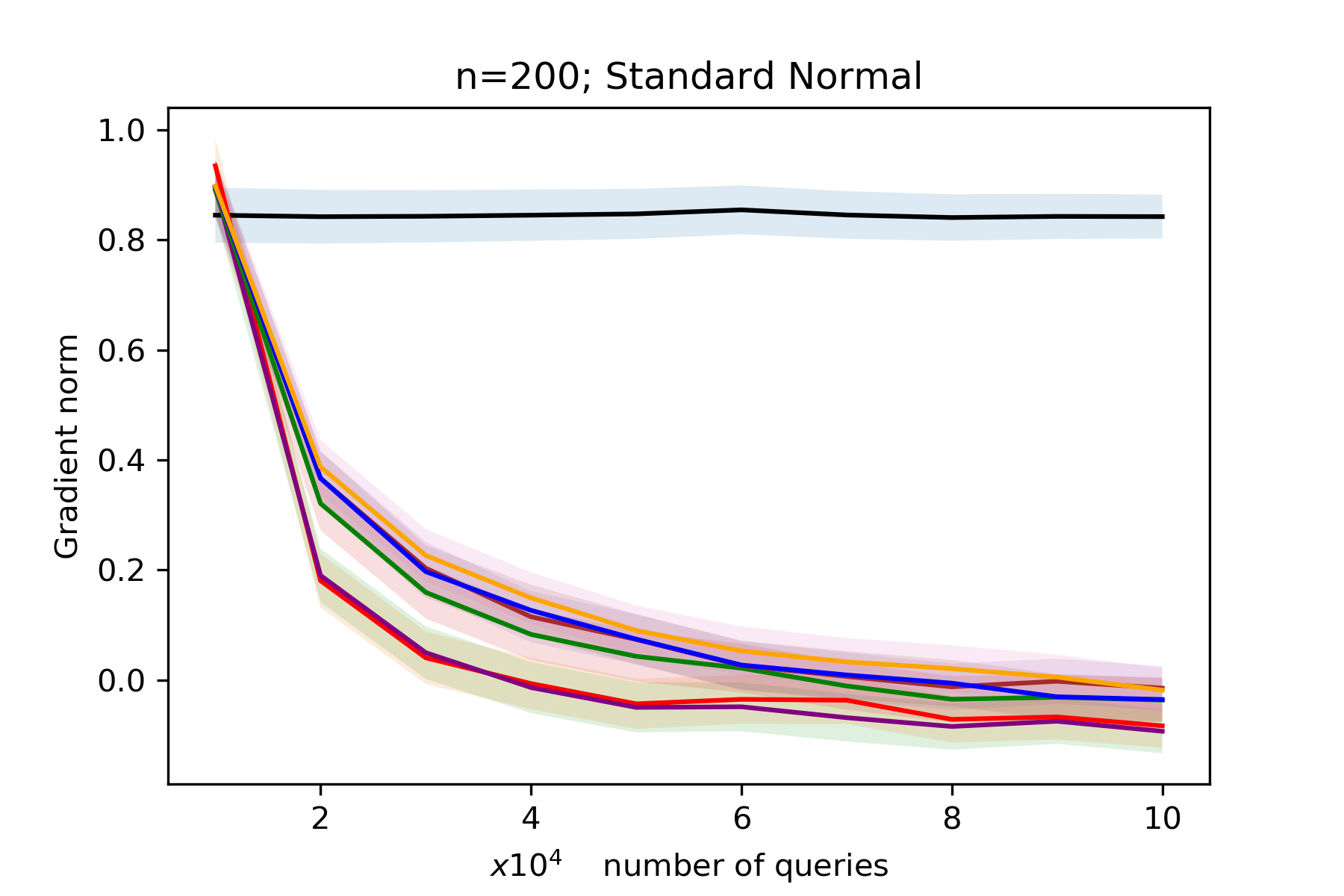}
\includegraphics[scale=0.48]{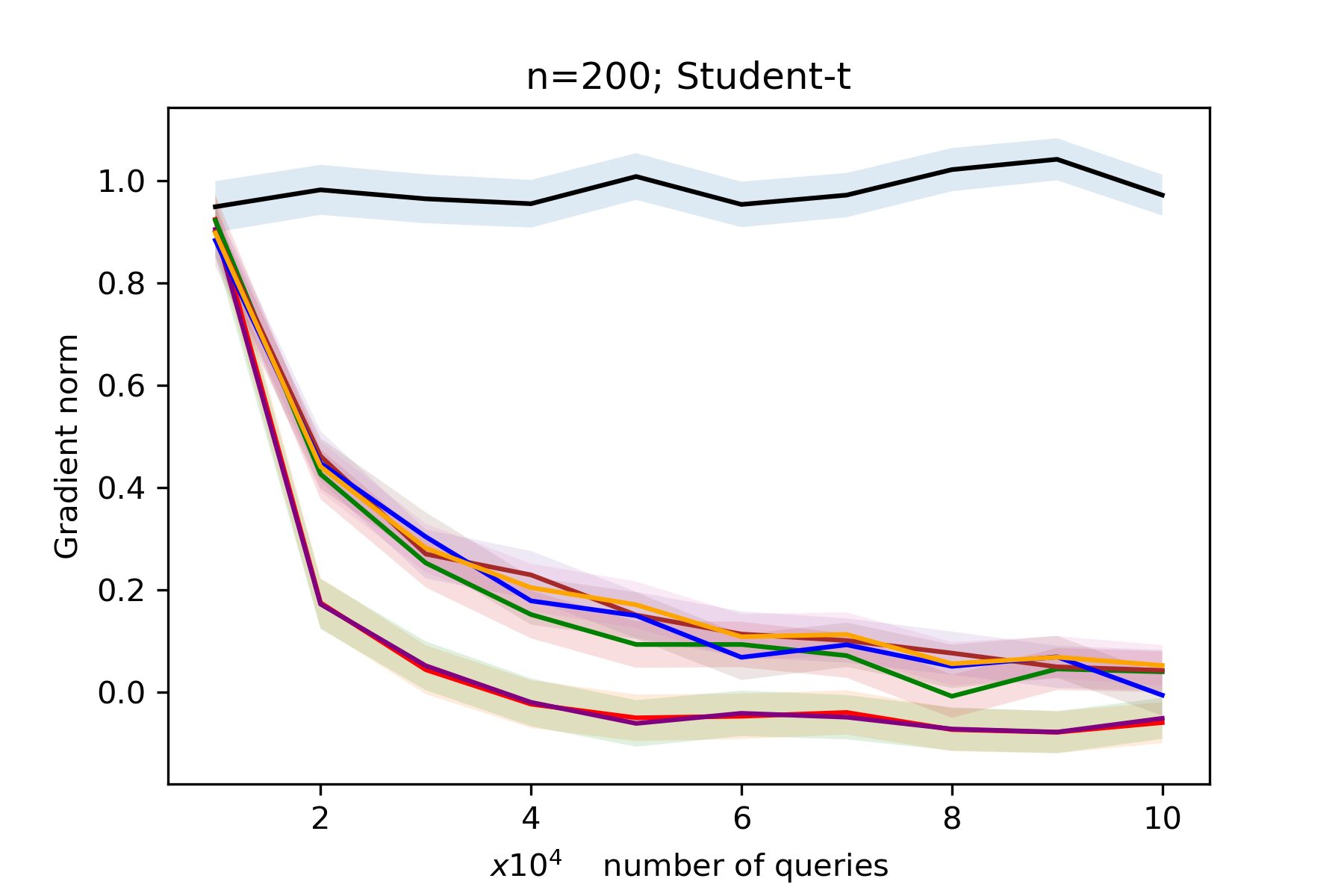}\\
\includegraphics[scale=0.48]{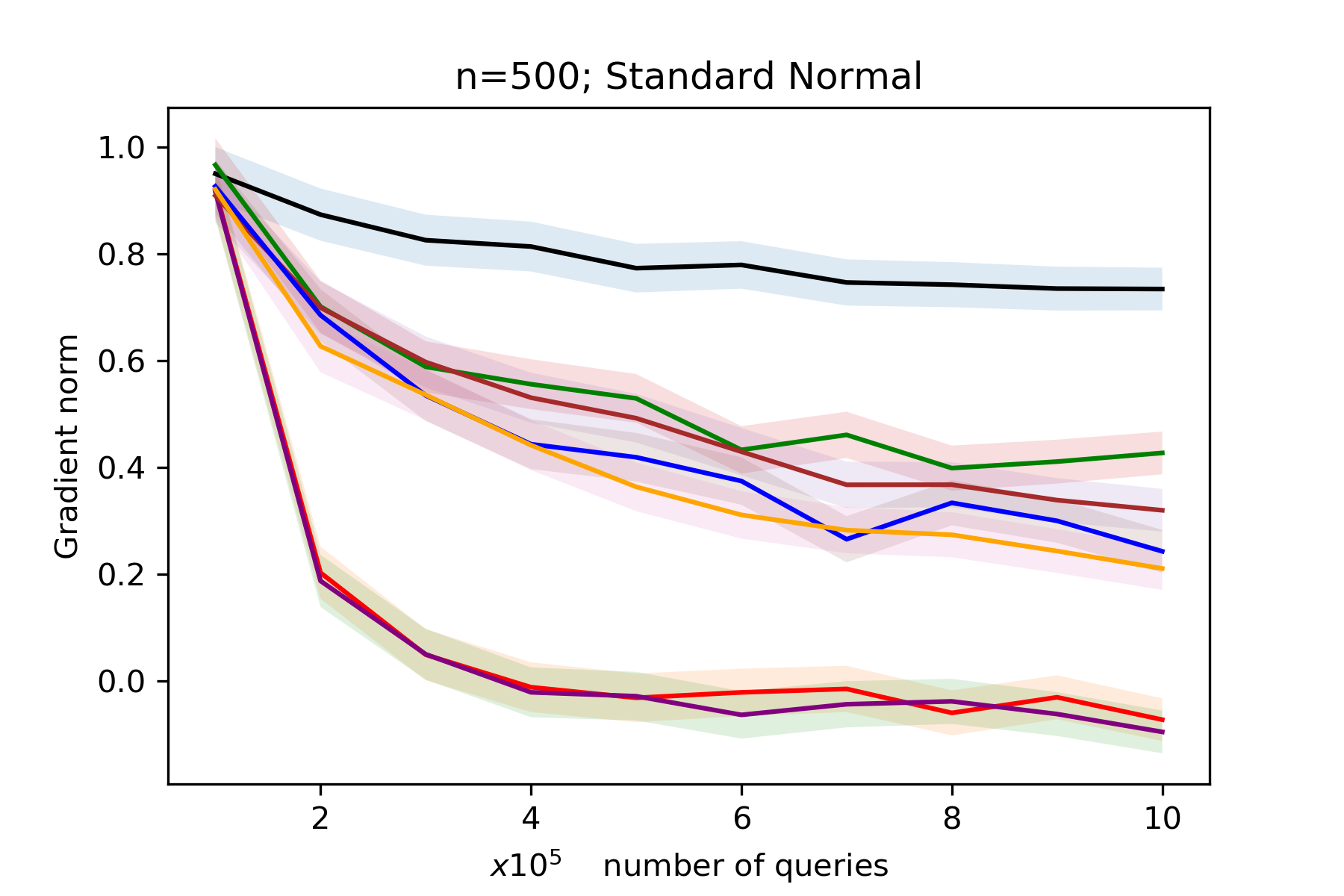}
\includegraphics[scale=0.48]{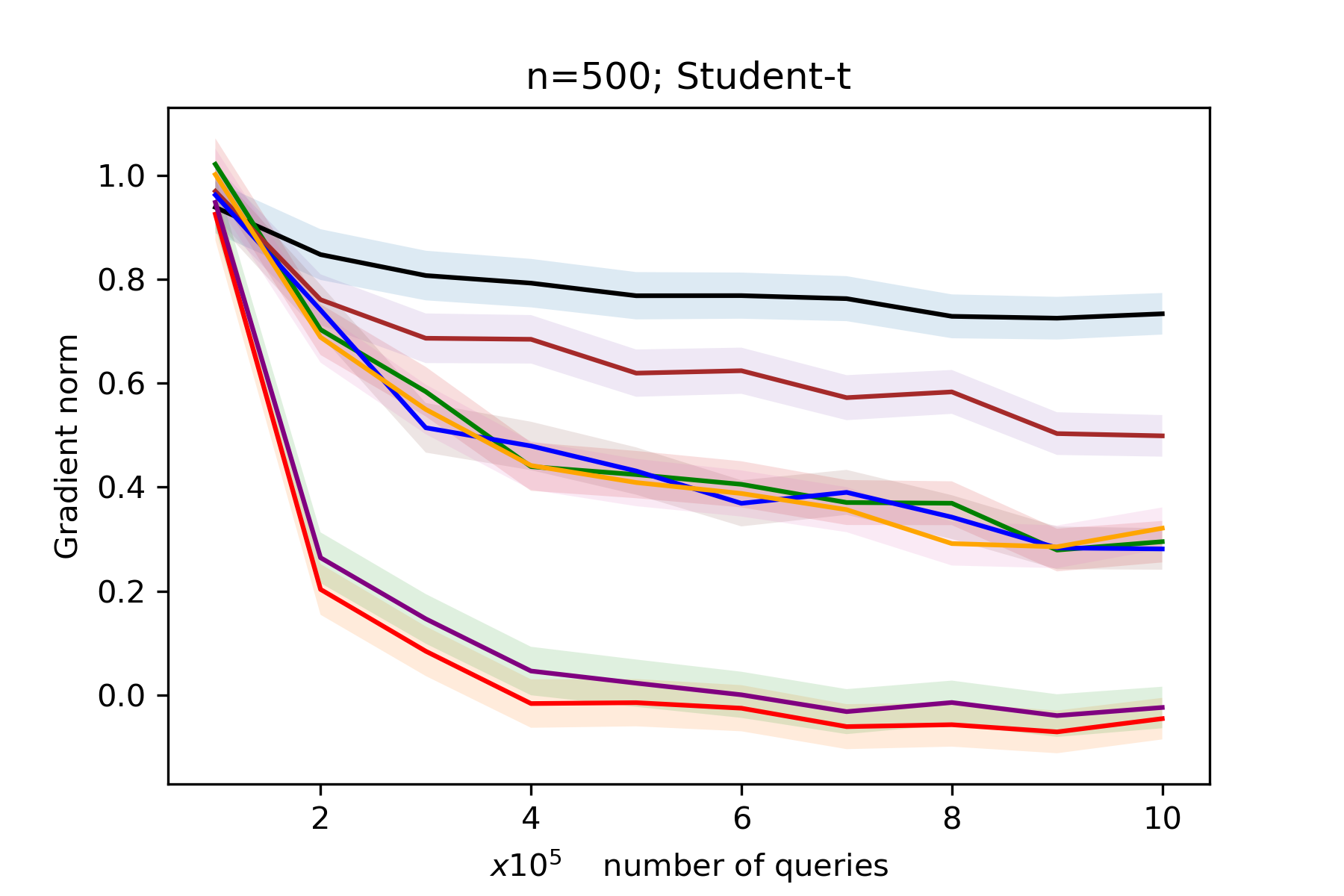}
\caption{Performance comparison on simulation experiment: Plot of number of queries versus norm of the gradient. The plots represent average curves over 100 trials and the shaded region corresponds to standard error. In the legend the \textcolor{black}{curves} corresponding to $\nu_i$ correspond to \texttt{SZO-ConEX} algorithm.}
\label{fig:bigsimnc}
\end{figure*}

A brief summary of the observations are: (i) the oracle complexity of \texttt{SZO-ConEX} method is consistently lower than other existing techniques including \texttt{ALBO}~\cite{gramacy2016modeling}, \texttt{Slack-AL}~\cite{picheny2016bayesian}, \texttt{ADMMBO}~\cite{ariafar2019admmbo}, and \texttt{PESC}~\cite{hernandez2015predictive}, highlighting the benefit of  \emph{constraint extrapolation} step, and (ii) the \texttt{SZO-ConEX} method is robust to the smoothing parameters as long as it is less than a particular threshold. Next, we report the performance of our algorithm on the two motivating examples from Section~\ref{sec:intro}.

\subsection{Application I: Tuning HMC Algorithm}\label{sec:hmctuning}
We now consider the problem of optimizing the hyperparameters of the HMC algorithm. A brief description of the HMC algorithm is provided in Section~\ref{sec:HMCbasics} for completeness. We follow~\cite{gelbart2014bayesian, hernandez2015predictive} closely for the experimental setup. The specific hyperparameters that we consider for this experiment are: (i) the number of leapfrog steps, denoted by $\tau$, (ii) step-size parameter, denoted by $\eta$, (iii) scalar coefficient of the mass matrix, denoted by $\kappa$ (here, following~\cite{neal2011mcmc}, we parametrize the mass matrix as $\kappa$ times an identity matrix\textcolor{black}{)}, and (iv) the fraction of the allotted time the algorithm spends in the burning phase. Hence, the \textcolor{black}{optimization variables are given by $x\in\mathbb{R}^4$. We remark that while the number of leap-frog steps is an integer, for our experiments, we consider it to be real-valued number. In practice, we round it off to the closest integer, with ties broken randomly.}  

The objective function we maximize is the number of effective samples in a fixed computation time. This is a widely used diagnostic metric for measuring the performance of sampling algorithms in Bayesian statistical machine learning~\textcolor{black}{\citep{kass1998markov, lenth2001some}}. \textcolor{black}{For sampling problems, effective sample size is defined as follows. First note that the samples outputted by a sampling algorithm are typically correlated. The effective sample size is defined as the number of \emph{independent} samples from the target density that achieves the same performance as the correlated samples outputted by the sampling algorithm.} However, there is no closed-form analytical relationship between this performance measure and the optimization variable $x$. For our experiments, we use the CODA package~\textcolor{black}{\citep{plummer2006coda}} for calculating the effective sample size. The constraint functions that we use are: (i) the generated samples must pass the Geweke diagnostics~\cite{geweke1991evaluating}; the worst Geweke test score across all variables and chains could be at most 2.0, (ii) the generated samples must pass the Gelman-Rubin convergence diagnostics~\textcolor{black}{\citep{gelman1992single}}; the worst Gelman-Rubin score between variables and chains could be at most be 1.2. The analytical form of the above convergence diagnostics and the optimization variable $x$ is also not available in closed-form. We use PyMC package~\textcolor{black}{\citep{patil2010pymc}} for evaluating the above diagnostic metrics.

We tune the HMC sampling algorithm with the above setup for the problem of sampling from the posterior distribution of a logistic regression binary classification problem on the German credit data set from UCI machine learning repository~\textcolor{black}{\citep{Dua:2019}}. The data set contains 1000 observations that are normalized to have unit variance. We initialize each chain randomly with independent draws from a Gaussian distribution with mean zero and standard deviation $10^{-3}$. For each set of inputs, we compute two chains, each with 5 minutes of computation time. As mentioned previously, all our simulation settings are following that of~\cite{gelbart2014bayesian, hernandez2015predictive}. We conduct our experiments by sub-sampling data sets of size 800 from the original dataset and repeating the procedure for 100 \textcolor{black}{trials}. We compare the performance of our algorithm (with $K=50$) with that of \texttt{ALBO} method by~\cite{gramacy2016modeling}, \texttt{Slack-AL} method by~\cite{picheny2016bayesian}, \texttt{ADMMBO} method by~\cite{ariafar2019admmbo}, and \texttt{PESC} method by~\cite{hernandez2015predictive}. The tuning parameters of the respective methods were set according to the guidelines provide\textcolor{black}{d} in the papers. For our algorithm, we found the performance was robust to the choice of the smoothing parameters, as long as it was \textcolor{black}{sufficiently small}. For the performance reported in Table~\ref{tab:hmctuning1}, we set it to $\nu_i=0.05$. In \textcolor{black}{Table~\ref{tab:hmctuning1}}, we report the average Effective Sample Size (ESS) for the various methods, along with the standard deviation. We notice that the performance of~\texttt{SZO-ConEx} is significantly better than that of the other methods, thereby demonstrating the effectiveness of our method for the problem of hyperparameter tuning for HMC sampling algorithm.
\begin{table*}[t]
\centering
\begin{tabular}{|c|c|c|c|c|c|} 
 \hline
 \textbf{Algorithm} &  \texttt{ALBO} & \texttt{Slack-AL}  & \texttt{ADMMBO}& \texttt{PESC}  & \texttt{SZO-ConEx}   \\ \hline  \hline
 \textbf{ESS}  &$9.4\times10^4 \pm 924$ & $9.3 \times 10^4 \pm 982$  &$9.4 \times 10^4 \pm 884$ &  $9.9 \times 10^4 \pm 998$ 
 &  $10.8 \times 10^4 \pm 992$ \\ \hline
\end{tabular}
\caption{Effective Sample Size (ESS) of Hamiltonian Monte Carlo sampling algorithm tuned by various methods, along with their standard error.}
\label{tab:hmctuning1}
\end{table*}
\begin{table*}[t]
\centering
\begin{tabular}{|c||c|c|c|c|c|} 
 \hline
 \textbf{Algorithm} &  \texttt{ALBO} & \texttt{Slack-AL}  & \texttt{ADMMBO} & \texttt{PESC}  & \texttt{SZO-ConEx}   \\ \hline  \hline
 \textbf{VE} on MNIST  &$3.4 \pm 0.05$ & $3.1 \pm 0.08$  &$3.0  \pm 0.05$ &  $2.9 \pm 0.03$ &  $1.9 \pm 0.04$ \\ \hline
 \textbf{VE} on CIFAR-10  &$4.7\pm 0.02$ & $4.0 \pm 0.03$  &$3.9  \pm 0.05$ &  $3.4  \pm 0.03$ &  $2.2 \pm 0.02$ \\ \hline
\end{tabular}
\caption{Validation Error (VE) along the standard error of 3-layer neural network trained using SGD with momentum for 5000 iterations on MNIST and CIFAR-10 datasets by picking hyperparameters tuned by various methods. The numbers reported are \textcolor{black}{related} to the constraint that the prediction time is not greater than 0.050 seconds on a Nvidia Tesla K20 GPU. }
\label{tab:hmctuning2}
\end{table*}

\subsection{Application II: Tuning a 3-Layer Neural Network}\label{sec:dnntuning}
Next, we turn to the problem of tuning the hyperparameters of \textcolor{black}{a} 3-layer neural network with ReLU activation function trained by stochastic gradient descent algorithm with momentum~\textcolor{black}{\citep{sutskever2013importance}} for 5000 iterations. We follow~\cite{hernandez2015predictive, ariafar2019admmbo} closely for the experimental setup.  The specific hyperparameters that we consider for this experiment are: (i) two learning rate parameters (initial and decay rate), (ii) momentum parameters (initial and final), (iii) dropout parameters (input layer and hidden layers), (iv)  regularization parameters corresponding to weight decay and max weight norm, and (v) the number of hidden units in each of the 3 hidden layers. Hence, the \textcolor{black}{optimization variables are given by $x\in \mathbb{R}^{11}$. Similar to the previous experiment, we treat the number of hidden layers as a real-valued variable and use the same rounding technique in practice.}  

The objective function we minimize is the classification error on the validation set (which we call Validation Error (VE)). Indeed, there is no good closed form expression connecting the above mentioned hyperparameters and the VE. The constraint function that we use is that the prediction time must not exceed 0.050 seconds. Here, we compute the prediction time as the average time of
1000 predictions, over a batch of size 128~\textcolor{black}{\citep{hernandez2015predictive, ariafar2019admmbo}}. The number \emph{0.050 seconds} is set based on the computing resource we use (Nvidia Tesla K20 GPU) so that we can see an active trade off between the objective function (the VE) and the constraint function (prediction time). As highlighted by~\cite{hernandez2015predictive, ariafar2019admmbo}, this specific choice is highly dependent on the computing resource used. Clearly, there is no analytical form for the function describing the relationship between the hyperparameters and the constraint function. All our implementations for this experiment were based on PyTorch open source machine learning library~\textcolor{black}{\citep{paszke2019pytorch}}. 

We tune the SGD algorithm with momentum with the above setup for the problem of classification on MNIST~\textcolor{black}{\citep{lecunmnisthandwrittendigit2010}} and CIFAR-10 datasets~\textcolor{black}{\citep{krizhevsky2009learning}}.  For both datasets, we conduct our experiments by sub-sampling $90\%$ of the training data and report our error over 100 \textcolor{black}{trials}. Similar to the previous case, we compare the performance of our algorithm (with $K=50$) with that of \texttt{ALBO} method by~\cite{gramacy2016modeling}, \texttt{Slack-AL} method by~\cite{picheny2016bayesian}, \texttt{ADMMBO} method by~\cite{ariafar2019admmbo}, and \texttt{PESC} method by~\cite{hernandez2015predictive}. The tuning parameters of the respective methods were set as suggested in the respective papers. The smoothing parameter for our algorithm was set as $\nu_i=0.03$. In Table~\ref{tab:hmctuning2}, we report the validation error achieved such that the constraint on the prediction time is respected for the various algorithms. From the results, we notice that the \texttt{SZO-ConEX} method outperforms the other methods on both the MNIST and CIFAR-10 datasets.

\section{Conclusion} \label{sec:conc}

In this paper, we proposed and analyzed stochastic zeroth-order optimization algorithms for nonlinear optimization problems with functional constraints. We consider the case when both the objective function and the constraint functions are observed only via noisy function queries. Our algorithm is based on leveraging the constraint extrapolation technique proposed by~\cite{boob2019proximal} and the Gaussian smoothing technique. We characterize the oracle complexity of the proposed algorithm in both the convex and nonconvex setting. We also apply our methodology \textcolor{black}{to} the problem of hyperparameter tuning for the HMC algorithm and 3-Layer neural networks trained using SGD with momentum, and demonstrate its superior performance. 

For future work, we plan to develop parallel versions of our algorithm for the case when the objective functions and the constraint functions are available only locally in different machines. We also plan to develop lower bounds on the oracle complexity of stochastic zeroth-order optimization algorithms in the constrained setting. It is of great interest to find other applications of the proposed methodology in statistical machine learning, reinforcement learning, and other scientific and engineering fields. Finally, it is also interesting to extend our methodology to the case of mixed constraints (i.e., equality and inequality constraint), and to develop novel methodology and analysis for constrained zeroth-order optimization with both binary and real-valued decision variables. 

\bibliographystyle{plainnat}
\bibliography{zoconstrained}

\clearpage
\onecolumn

\begin{center}
\Large\textbf{Supplementary Materials for ``Stochastic Zeroth-order Functional Constrained Optimization: Oracle Complexity and Applications"}
\end{center}

\section{Basics of Hamiltonian Monte Carlo sampling}\label{sec:HMCbasics}
For the sake of completeness, we give a brief description of the Hamiltonian Monte Carlo sampling algorithm used in Section~\ref{sec:hmctuning}. The presentation below follows~\cite{neal2011mcmc} for the most part. Suppose the problem is to sample from the distribution $\pi(q):\mathbb{R}^d \to \mathbb{R}$ whose potential function is given by $f(q): \mathbb{R}^d \to \mathbb{R}$. First \textcolor{black}{consider} the Hamiltonian form, given by 
\begin{align}\label{eq:hamilton}
H(q,p) = f(q) + K(p) = f(q) + p^\top M^{-1} p,
\end{align}
where $M \in \mathbb{R}^{d \times d}$ is the 'mass matrix'. Following~\cite{neal2011mcmc}, we assume a diagonal parametrization for $M$, i.e., we have $M = \kappa I$. The Hamiltonian dynamics of the position vector $q$ and the momentum vector $p$ is determined by the equation given by
\begin{align}
\frac{dz}{dt} = J \nabla H(z),\qquad\text{where}\qquad J= \begin{pmatrix}
  0_{d\times d} & I_{d \times d}\\ 
  -I_{d\times d} & 0_{d \times d}
\end{pmatrix}
\end{align}
and $z\coloneqq(q,p) \in \mathbb{R}^{2d}$ and $\nabla H$ is the gradient of the Hamiltonian function in~\eqref{eq:hamilton}. The HMC sampling algorithm is based on performing $\tau$ leapfrog steps for discretizing the above equation. Here, a \emph{leapfrog (or symplectic integrator) step}, for a given step-size $\eta$, is given by 
\begin{align*}
p_{n+1/2} &= p_n -\frac{\eta}{2} \frac{dH}{dq} (q_n) \\
q_{n+1} & = q_n +  \frac{\eta}{\kappa} p_{n+1/2}\\
p_{n+1} & = p_{n+1/2} -\frac{\eta}{2}  \frac{dH}{dq} (q_{n+1}),
\end{align*}
where $n$ is the index of the number of steps. More details regarding HMC could also be found in~\cite{betancourt2017conceptual}.

\section{Proofs for Section~\ref{sec:prelim}}\label{sec:prelimp}
We start with the following well-known result on the stochastic zeroth-order gradient estimator in~\eqref{eq:gradest}. 
\begin{theorem}[\cite{nesterov2017random}]\label{thm1}
For a Gaussian random vector $u \sim N(0, I_n)$ we have 
\begin{align}
    \mathbb{E}[\| u \|^k] & \leqslant (n+k)^{k/2}
\end{align}
for any $k \geqslant 2$. Moreover, the following statements hold for any function $\psi$ whose gradient is Lipschitz continuous with constant $L$
\begin{itemize}[leftmargin=0.1in]
    \item The gradient of $\psi_{\nu}(x)\coloneqq \mathbb{E}_u[\psi(x+\nu u)]$ is Lipschitz continuous with constant $L_{\nu}$ such that $L_{\nu} \leqslant L$.
    \item For any $x \in \mathbb{R}^n$, we have
    \begin{align}
        |\psi_{\nu}(x) - \psi(x) | & \leqslant \frac{\nu^2}{2}Ln,\\
        \| \nabla \psi_{\nu}(x) - \nabla \psi(x) \| & \leqslant \frac{\nu}{2}L(n+3)^{3/2}.
    \end{align}
    \item  For any $x \in \mathbb{R}^n$, we have
    \begin{align}
        \frac{1}{\nu^2}\mathbb{E}_u[\{\psi(x+\nu u) - \psi(x)\}^2\|u\|^2] \leqslant \frac{\nu^2}{2}L^2(n+6)^3 + 2(n+4)\| \nabla \psi(x)\|^2.
    \end{align}
\end{itemize}
\end{theorem}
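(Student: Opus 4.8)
The plan is to prove the four assertions in sequence, all of which rest on two ingredients: a single moment estimate for the Gaussian norm, and Gaussian integration by parts (Stein's identity). Since the theorem is essentially the Nesterov--Spokoiny smoothing bound, the work is in assembling these pieces with the right constants rather than in any conceptual leap.

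First I would establish the moment bound $\mathbb{E}[\|u\|^k]\le(n+k)^{k/2}$, as everything else is only as sharp as this. Writing $M_k\coloneqq\mathbb{E}[\|u\|^k]$ and integrating by parts coordinate-wise against the Gaussian density (using $u_i e^{-\|u\|^2/2}=-\partial_{u_i}e^{-\|u\|^2/2}$ and summing over $i$) yields the recursion $M_k=(n+k-2)\,M_{k-2}$. For the base case $k\in(0,2]$, concavity of $t\mapsto t^{k/2}$ and Jensen's inequality give $M_k=\mathbb{E}[(\|u\|^2)^{k/2}]\le(\mathbb{E}\|u\|^2)^{k/2}=n^{k/2}\le(n+k)^{k/2}$. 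For $k>2$ I would induct: assuming $M_{k-2}\le(n+k-2)^{(k-2)/2}$, the recursion gives $M_k=(n+k-2)M_{k-2}\le(n+k-2)^{k/2}\le(n+k)^{k/2}$. This covers all real $k\ge2$, in particular the cases $k\in\{2,3,6\}$ needed below.

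Next I would handle the smoothness and approximation estimates. Differentiating under the expectation (justified by $L$-Lipschitzness and dominated convergence) gives $\nabla\psi_\nu(x)=\mathbb{E}_u[\nabla\psi(x+\nu u)]$, so Jensen's inequality on the dual norm together with Lipschitzness of $\nabla\psi$ immediately yields $L_\nu\le L$. For $|\psi_\nu(x)-\psi(x)|$, the quadratic bound $|\psi(x+\nu u)-\psi(x)-\nu\langle\nabla\psi(x),u\rangle|\le\tfrac{L}{2}\nu^2\|u\|^2$ combined with $\mathbb{E}_u[u]=0$ and $\mathbb{E}_u\|u\|^2=n$ gives $\tfrac{\nu^2}{2}Ln$. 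The key step for the gradient-bias bound is the Stein representation $\nabla\psi_\nu(x)=\tfrac1\nu\mathbb{E}_u[(\psi(x+\nu u)-\psi(x))u]$, obtained from $\mathbb{E}[g(u)u_i]=\mathbb{E}[\partial_{u_i}g(u)]$ with $g(u)=\psi(x+\nu u)$; subtracting $\nabla\psi(x)=\mathbb{E}_u[\langle\nabla\psi(x),u\rangle u]$ (valid since $\mathbb{E}[uu^\top]=I$), bounding the scalar integrand by $\tfrac{L\nu}{2}\|u\|^2$, and applying the moment bound at $k=3$ produces $\tfrac{\nu}{2}L(n+3)^{3/2}$.

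Finally, for the variance-type bound I would split $\psi(x+\nu u)-\psi(x)$ into the second-order remainder plus the linear term $\nu\langle\nabla\psi(x),u\rangle$, square via $(a+b)^2\le2a^2+2b^2$, multiply by $\|u\|^2$, and take expectations. The remainder contributes $\tfrac{L^2\nu^2}{2}\mathbb{E}\|u\|^6\le\tfrac{\nu^2}{2}L^2(n+6)^3$ by the moment bound at $k=6$, while the linear term contributes $2\,\mathbb{E}[\langle\nabla\psi(x),u\rangle^2\|u\|^2]$, which by rotational invariance (align $\nabla\psi(x)$ with $e_1$ and compute $\mathbb{E}[u_1^2\|u\|^2]=\mathbb{E}[u_1^4]+(n-1)=n+2$) equals $(n+2)\|\nabla\psi(x)\|^2\le(n+4)\|\nabla\psi(x)\|^2$; summing reproduces the stated bound exactly. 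I expect the moment estimate for general real $k\ge2$ to be the main obstacle: the integration-by-parts recursion is clean, but one must patch in the non-integer base case through Jensen and then chain the induction carefully, and the sharpness of every later step (the $k=3$ gradient bias and the $k=6$ variance term) is inherited directly from it.
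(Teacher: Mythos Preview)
The paper does not prove Theorem~\ref{thm1}; it is quoted verbatim from \cite{nesterov2017random} and used as a black box in the proofs of Lemmas~\ref{lemma:zogradbounds}, \ref{thm:gradestrate}, \ref{lemma4}, and~\ref{lemma6}. Your proposal is a correct and self-contained reconstruction of the Nesterov--Spokoiny argument: the moment recursion $M_k=(n+k-2)M_{k-2}$ via Gaussian integration by parts with the Jensen base case for $k\in(0,2]$, the Stein representation $\nabla\psi_\nu(x)=\nu^{-1}\mathbb{E}_u[(\psi(x+\nu u)-\psi(x))u]$, and the $(a+b)^2\le 2a^2+2b^2$ split for the variance bound are exactly the steps in the original reference, and your constants match. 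The only implicit assumption worth flagging is that all four claims are for the Euclidean norm (you use rotational invariance and $\mathbb{E}[uu^\top]=I$), which is indeed the setting of \cite{nesterov2017random}; the general-norm framing in the paper's Assumption~\ref{Assumption1} does not extend to this theorem.
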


\begin{proof}[\textbf{Proof of Lemma~\ref{lemma:zogradbounds}}]
Note that
\begin{align*}
    \| F_{\nu}(x, \xi, u) - f_{\nu}(x)\|^2 & = \textstyle\sum_{i=1}^m(f_{i, \nu_i}(x) - F_i(x+\nu_i u, \xi))^2.
\end{align*}
By Young's inequality, we have
\begin{align*}
    | F_i(x+\nu_iu, \xi) - f_{i, \nu_i}(x) |^2 & = | [F_i(x+\nu_iu, \xi) - F_i(x, \xi)] + [F_i(x, \xi) - f_i(x)] + [f_i(x) - f_{i, \nu_i}(x)] |^2 \notag \\
    & \leqslant 4| F_i(x+\nu_iu, \xi) - F_i(x, \xi)|^2 + 4|f_i(x) - f_{i, \nu_i}(x)|^2 + 2|F_i(x, \xi) - f_i(x)|^2 \notag \\
    & \leqslant 4M_{i}^2\nu_i^2\| u \|^2 + 4 \left(\frac{\nu_i^2}{2}L_in\right)^2 + 2| F_i(x, \xi) - f_i(x)|^2. \notag
\end{align*}
Now, by Assumption \ref{Assumption2} and Theorem \ref{thm1}, we have 
\begin{align*}
    \mathbb{E}|f_{i, \nu_i}(x) - F_i(x+\nu_iu, \xi)|^2 & \leq 4M_{i}^2\nu_i^2(n+2) + 2\sigma_{f, i}^2 + L_i^2\nu_i^4n^2. 
\end{align*}
Consequently, we obtain
\begin{align*}
    \mathbb{E}\| F_{\nu}(x, \xi, u) - f_{\nu}(x) \|^2 & \leqslant (\textstyle\sum_{i=1}^m4M_{i}^2\nu_i^2(n+2) + L_i^2\nu_i^4n^2) + 2\sigma_f^2 =: \sigma_{f, \nu}^2.
\end{align*}
\end{proof}

\begin{proof}[\textbf{Proof of Lemma~\ref{thm:gradestrate}}]
First note that by Theorem \ref{thm1}, we have
\begin{align}\label{temp1}
    &~\frac{1}{\nu_i^2}\mathbb{E}_u[\{ F_i(x+\nu_i u, \xi) - F_i(x, \xi) \}^2\|u\|^2] \notag \\\leqslant&~ \frac{\nu_i^2}{2}L_i^2(n+6)^3 + 2(n+4)\|\nabla F_i(x, \xi)\|^2 \notag \\  \leqslant &~\frac{\nu_i^2}{2}L_i^2(n+6)^3 + 4(n+4)[\| \nabla F_i(x, \xi) - \nabla f_i(x)\|^2 + \|\nabla f_i(x)\|^2].
\end{align}
Next note that
\begin{align*}
\| \nabla f_{i, \nu_i}(x) \| &\leqslant \| \nabla f_{i, \nu_i}(x) - \nabla f_i(x)\| + \| \nabla f_i(x)\| \\
&\leqslant \frac{\nu_i}{2}L_i(n+3)^{3/2} + L_iD_X + \| \nabla f_i(x^*)\| \\
&\leqslant \frac{\nu_i}{2}L_i(n+3)^{3/2} + L_iD_X + M_{i} =: \tilde{B}_i,
\end{align*}
where $M_{i}$ is from Assumption \ref{Assumption2}.
Taking the expectation with respect to $\xi$ on both sides of~\eqref{temp1}, we have 
\begin{align*}
    \mathbb{E}[|| G_{i, \nu_i}(x, \xi, u) \|^2] \leqslant \frac{\nu_i^2}{2}L_i^2(n+6)^3 + 4(n+4)[\sigma_i^2 + \tilde{B}_i^2].
\end{align*}

From the above inequalities, using Assumptions \ref{Assumption3} and \ref{Assumption2}, Theorem \ref{thm1}, and Young's inequality, we have
\begin{align*}
    \mathbb{E}[\| G_{i, \nu_i}(x, \xi, u) - \nabla f_{i, \nu_i}(x)\|^2] & \leqslant 2\mathbb{E}[\| G_{i, \nu_i}(x, \xi, u) \|^2] + 2\| \nabla f_{i, \nu_i}(x)\|^2 \\
    & \leqslant \nu_i^2 L_i^2(n+6)^3 + 8(n+4)[\sigma_i^2 + \tilde{B}_i^2] + 2\tilde{B}_i^2 \\ & \leqslant \nu_i^2L_i^2(n+6)^3 + 10(n+4)[\sigma_i^2 + \tilde{B}_i^2],
\end{align*}
which completes the proof. 
\end{proof}

\section{Proofs for Section~\ref{sec:thms}}\label{sec:thmsp}

In the proofs below, to avoid notational clutter, we use $x_t$ instead of using $x^{(t)}$, and we use $G_{i,\nu_i}(x_t, \xi_t, u_t)$ instead of $G_{i,\nu_i}(x^{(t)},\xi_i^{(t)}, u_i^{(t)})$. Next, in order to obtain the oracle complexity of Algorithm~\ref{Algorithm1}, we define a primal-dual gap function for the equivalent saddle point problem \eqref{eq:1.5}. In particular, given a pair of feasible solution $z = (x, y)$ and $\bar{z} = (\bar{x}, \bar{y})$ of \eqref{eq:1.5}, we define the primal-dual gap function $Q(z, \bar{z})$ as 
\begin{align}
    Q(z, \overline{z}) := \mathcal{L}(x, \bar{y}) - \mathcal{L}(\bar{x}, y). \label{eq:1.14}
\end{align}
For the remainder of the paper, we denote $Q_{\nu}(z, \bar{z}) = \mathcal{L}_{\nu}(x, \bar{y}) - \mathcal{L}_{\nu}(\bar{x}, y)$. Now we establish the error between these two functions.
\begin{lemma} \label{lemma4}
Under Assumptions~\ref{Assumption1},~\ref{Assumption3} and ~\ref{Assumption2}, we have
\begin{align}
    |Q(z, \bar{z}) - Q_{\nu}(z, \bar{z})| \leqslant \nu_0^2L_0n + M_Xn(\textstyle\sum_{i=1}^m\nu_i^4L_i^2)^{1/2},
\end{align}
where $M_X = \sup_{x \in X}\|x\|$.
\end{lemma}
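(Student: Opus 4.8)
The plan is to expand $Q - Q_\nu$ directly from the definitions of $\mathcal{L}$ and $\mathcal{L}_\nu$ and then control the resulting objective and constraint discrepancies separately, in each case appealing to the smoothing estimates of Theorem~\ref{thm1}. Writing $z=(x,y)$ and $\bar z=(\bar x,\bar y)$, the definitions of $Q$ and $Q_\nu$ give
\begin{align*}
Q(z,\bar z)-Q_\nu(z,\bar z)=\big[\mathcal{L}(x,\bar y)-\mathcal{L}_\nu(x,\bar y)\big]-\big[\mathcal{L}(\bar x,y)-\mathcal{L}_\nu(\bar x,y)\big].
\end{align*}
Since $\mathcal{L}(x,y)-\mathcal{L}_\nu(x,y)=[f_0(x)-f_{0,\nu_0}(x)]+\langle y,\,f(x)-f_\nu(x)\rangle$, applying the triangle inequality would split $|Q-Q_\nu|$ into two objective terms, $|f_0(x)-f_{0,\nu_0}(x)|$ and $|f_0(\bar x)-f_{0,\nu_0}(\bar x)|$, together with two constraint terms, $|\langle\bar y,\,f(x)-f_\nu(x)\rangle|$ and $|\langle y,\,f(\bar x)-f_\nu(\bar x)\rangle|$.

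For the objective terms I would invoke the first smoothing estimate of Theorem~\ref{thm1} with $\psi=f_0$ and parameter $\nu_0$, namely $|f_{0,\nu_0}(\cdot)-f_0(\cdot)|\le \tfrac{\nu_0^2}{2}L_0 n$. The two terms together then contribute $\nu_0^2 L_0 n$, which is exactly the first term on the right-hand side of the claimed bound.

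For the constraint terms I would first apply Cauchy--Schwarz, $|\langle y,\,f(\cdot)-f_\nu(\cdot)\rangle|\le \|y\|_2\,\|f(\cdot)-f_\nu(\cdot)\|_2$, and then bound the second factor componentwise: by Theorem~\ref{thm1} applied to each $f_i$ with parameter $\nu_i$ we have $|f_i(\cdot)-f_{i,\nu_i}(\cdot)|\le \tfrac{\nu_i^2}{2}L_i n$, so summing squares over $i\in[m]$ yields $\|f(\cdot)-f_\nu(\cdot)\|_2\le \tfrac{n}{2}\big(\sum_{i=1}^m\nu_i^4 L_i^2\big)^{1/2}$. Adding the two constraint terms and bounding the dual norms $\|y\|_2,\|\bar y\|_2$ by $M_X$ then produces $M_X\, n\,\big(\sum_{i=1}^m\nu_i^4 L_i^2\big)^{1/2}$, matching the second term.

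Once the decomposition is in place the argument is essentially mechanical, so I do not expect a genuine obstacle; the one place where the accounting matters is the constraint contribution. There the aggregation over the $m$ constraints must be carried out through Cauchy--Schwarz so as to produce the $\ell_2$ quantity $\big(\sum_i \nu_i^4 L_i^2\big)^{1/2}$ rather than a looser $\sum_i$ bound (which would degrade the dependence on $m$), and the factor $M_X$ enters precisely as the bound on the dual variables appearing in the gap function. This is the step whose constants must be tracked carefully, since the resulting bias term feeds directly into the oracle-complexity estimates of Theorem~\ref{thm11}.
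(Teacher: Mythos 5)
Your proposal is correct and follows essentially the same route as the paper's proof: expand $Q-Q_\nu$ via the Lagrangian definitions, split by the triangle inequality into objective and constraint discrepancies, bound the former with the $\tfrac{\nu_0^2}{2}L_0 n$ smoothing estimate and the latter with Cauchy--Schwarz plus the componentwise bound $\|f(\cdot)-f_\nu(\cdot)\|_2\le \tfrac{n}{2}(\sum_i\nu_i^4L_i^2)^{1/2}$, and finally control the multipliers by $M_X$. The accounting, including the use of $M_X$ to bound $\|y\|_2$ and $\|\bar y\|_2$, matches the paper's argument step for step.
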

\begin{proof}[\textbf{Proof of Lemma~\ref{lemma4}}]
First, we claim that the following is true:
\begin{align}
    \| f(x) - f_{\nu}(x)\| = \frac{n}{2}(\textstyle\sum_{i=1}^m\nu_i^4L_i^2)^{1/2}.
\end{align}
To see that, note that since the components $f_i$ of $f$ have continuous Lipschitz gradient and using theorem \ref{thm1}, we have 
\begin{align*}
    \| f(x) - f_{\nu}(x) \| & = (\textstyle\sum_{i=1}^m(f_i(x) - f_{i, \nu_i}(x))^2)^{1/2} \\
    & \leqslant \left(\sum_{i=1}^m\left(\frac{\nu_i^2L_in}{2}\right)^2\right)^{1/2} \\
    & = \left(\sum_{i=1}^m\frac{\nu_i^4}{4}L_i^2n^2\right)^{1/2} \\
    & = \frac{n}{2}(\textstyle\sum_{i=1}^m\nu_i^4L_i^2)^{1/2}
\end{align*}
Utilizing this relation, using Theorem \ref{thm1} and Cauchy-Schwartz inequality, we have
\begin{align*}
    | Q(z, \bar{z}) - Q_v(z, \bar{z})| & = | \mathcal{L}(x, \bar{y}) - \mathcal{L}(\bar{x}, y) - \mathcal{L}_{\nu}(x, \bar{y}) + \mathcal{L}_{\nu}(\bar{x}, y)| \\ 
    & = |f_0(x) + \bar{y}^Tf(x) - f_0(\bar{x}) - y^Tf(\bar{x}) - f_{0, \nu_0}(x) - \bar{y}^Tf_{\nu}(x) + f_{0, \nu_0}(\bar{x}) + y^Tf_{\nu}(\bar{x})| \\ 
    & \leqslant |f_0(x) - f_{0, \nu_0}(x)| + |f_0(\bar{x}) - f_{0, \nu_0}(\bar{x})| + |\bar{y}^T[f(x) - f_{\nu}(x)]| + |y^T[f(\bar{x}) - f_{\nu}(\bar{x})]| \\
    & \leqslant |f_0(x) - f_{0, \nu_0}(x)| + |f_0(\bar{x}) - f_{0, \nu_0}(\bar{x})| + \|\bar{y}\| \| f(x)-f_{\nu}(x)\| + \|y\| \|f(\bar{x}) - f_{\nu}(\bar{x})\| \\
    & \leqslant |f_0(x) - f_{0, \nu_0}(x)| + | f_0(\bar{x}) - f_{0, \nu_0}(\bar{x})| + M_X[\| f(x) - f_{\nu}(x)\| + \|f(\bar{x}) - f_{\nu}(\bar{x})\|] \\ & \leqslant \nu_0^2L_0n + M_X[n(\textstyle\sum_{i=1}^m\nu_i^4L_i^2)^{1/2}]
\end{align*}
This concludes the proof.
\end{proof}

\begin{lemma}\label{lemma222}
Suppose Assumptions~\ref{Assumption1},~\ref{Assumption3} and ~\ref{Assumption2} are satisfied. Then, for all $T \geqslant 1$, we have 
\begin{align}
    \mathbb{E}[f_0(\bar{x}_T) - f_0(x^*)] & \leqslant \frac{1}{\Gamma_T}\biggl[\gamma_0\eta_0W(x^*, x_0) + \frac{\gamma_0\eta_0}{2}\| y_0\|_2^2 + \textstyle\sum_{t=0}^{T-1}\frac{2\gamma_t}{\eta_t - L_{0} - L_{f}}\mathbb{E}[\| \delta_t^G\|_*^2] \notag \\   &\quad+  \left(\textstyle\sum_{t=1}^{T-1}\frac{12\gamma_t\theta_t^2}{\tau_t} + \frac{12\gamma_{T-1}}{\tau_{T-1}}\right)(\sigma_{f, \nu}^2 + D_X^2\| \sigma_{\nu}\|_2^2)\biggr] + [\nu_0^2L_0n + M_Xn(\textstyle\sum_{i=1}^m\nu_i^4L_i^2)^{1/2}] \label{eq:1.37} \\
    \mathbb{E}[\| [f(\bar{x}_T)]_+ \|_2] & \leqslant \frac{1}{\Gamma_T}\biggl[\gamma_0\tau_0\|y_0\|_2^2 + 3(\|y^*\|_2 + 1)^2\gamma_0\tau_0 + \gamma_0\eta_0W(x^*, x_0) \notag \\ &\quad + \textstyle\sum_{t=0}^{T-1}\frac{2\gamma_t}{\eta_t - L_{0} - L_{f}}\left\{\mathbb{E}[\|\delta_t^G\|_*^2] + \left(\frac{L_{f}D_X}{2}\|y^*\|_2\right)^2\right\} \label{eq:1.49} \\ &\quad + \left(\textstyle\sum_{t=1}^{T-1}\frac{12\gamma_t\theta_t^2}{\tau_t} + \sum_{t=0}^{T-1}\frac{\gamma_t}{\tau_t} + \frac{12\gamma_{T-1}}{\tau_{T-1}}\right)(\sigma_{f, \nu}^2 + D_X^2\|\sigma_{\nu}\|_2^2)\biggr] \notag \\ &\quad+ [\nu_0^2L_0n + M_Xn(\textstyle\sum_{i=1}^m\nu_i^4L_i^2)^{1/2}]. \notag
\end{align}
where $\Gamma_T := \sum_{t=0}^{T-1}\gamma_t$ and $\sigma_{\nu} = (\sigma_{1, \nu_1}, \ldots, \sigma_{m, \nu_m})$ with $\sigma_{i, \nu_i}$ as defined in \eqref{eq:1.30}, and \textcolor{black}{$\delta^G_t\coloneqq  G_{0, \nu_0}(x_t, \xi_t, u_t) - f_{0, \nu_0}'(x_t) + \sum_{i=1}^my_{t+1}^{(i)} \left(G_{i, \nu_i}(x_t, \xi_t, u_t) - f_{i, \nu_i}'(x_t) \right) $}.
\end{lemma}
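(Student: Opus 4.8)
The plan is to analyze Algorithm~\ref{Algorithm1} as an inexact stochastic primal--dual scheme for the \emph{smoothed} saddle-point problem $\min_{x\in X}\max_{y\geqslant\mathbf{0}}\mathcal{L}_{\nu}(x,y)$, and only at the very end transfer the guarantees back to the original Lagrangian $\mathcal{L}$ via Lemma~\ref{lemma4}. Working with $\mathcal{L}_{\nu}$ is essential: although $G_{i,\nu_i}$ is biased for $\nabla f_i$, it is unbiased for $\nabla f_{i,\nu_i}$, so all mean-zero arguments stay clean in the smoothed world, and the deterministic smoothing bias is deferred to Lemma~\ref{lemma4}. First I would establish a one-step inequality controlling the smoothed gap $Q_{\nu}(z_{t+1},z)=\mathcal{L}_{\nu}(x_{t+1},y)-\mathcal{L}_{\nu}(x,y_{t+1})$ for an arbitrary comparison point $z=(x,y)$. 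For the primal (prox) step, the three-point property of $\textbf{prox}(\cdot,x_t,\eta_t)$ together with $1$-strong convexity of $\omega$ yields, with $g_t$ the stochastic Lagrangian gradient of Step~5,
\begin{align*}
\langle g_t, x_{t+1}-x\rangle \leqslant \eta_t\big(W(x,x_t)-W(x,x_{t+1})\big)-\eta_t W(x_{t+1},x_t),
\end{align*}
where the combined smoothness $L_0+L_f$ of the objective and the linearized constraints is dominated by the $-\eta_t W(x_{t+1},x_t)$ term, producing the denominators $\eta_t-L_0-L_f$ and the requirement $\eta_t>L_0+L_f$. For the dual (ascent) step I would use the analogous Euclidean prox inequality for $y_{t+1}=[y_t+\tau_t^{-1}s_t]_+$, producing the telescoping terms $\tfrac{\tau_t}{2}(\|y-y_t\|_2^2-\|y-y_{t+1}\|_2^2)$.

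The heart of the argument is the extrapolation/moving-average term $s_t=(1+\theta_t)\ell_F(x_t)-\theta_t\ell_F(x_{t-1})$. Following the constraint-extrapolation identity of~\cite{boob2019proximal}, the $\theta_t$-weighted differences of the stochastic linearizations telescope across iterations, leaving a boundary term at $t=T-1$ and a residual pairing the linearization increments with $y_{t+1}-y$. I would then decompose every stochastic gradient into its conditional mean plus the error $\delta_t^G$ of the statement, and decompose $\ell_F$ into its smoothed counterpart $\ell_f$ plus a stochastic linearization error whose second moment is exactly $\sigma_{f,\nu}^2+D_X^2\|\sigma_{\nu}\|_2^2$ (via Lemmas~\ref{lemma:zogradbounds} and~\ref{thm:gradestrate}, bounding $\|x_t-x_{t-1}\|^2$ by the $W$-diameter $D_X^2$). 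Taking conditional expectations, the mean-zero parts vanish against past-measurable quantities; here the independent copies $\bar{\xi}^{(t)},\bar{u}^{(t)}$ built into $\ell_F$ are indispensable, since they decouple the gradient driving the dual update from the one driving the primal update, so that $y_{t+1}$ is conditionally fixed when $G_{i,\nu_i}(x_t,\xi_t,u_t)$ is averaged and $\mathbb{E}[\delta_t^G\mid\cdot]=\mathbf{0}$. The surviving cross terms are split by Young's inequality against $\eta_t W(x_{t+1},x_t)$ (giving the $\tfrac{2\gamma_t}{\eta_t-L_0-L_f}\mathbb{E}\|\delta_t^G\|_*^2$ contributions) and against the dual distance (giving the $\gamma_t\theta_t^2/\tau_t$ and $\gamma_t/\tau_t$ variance contributions).

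Summing with weights $\gamma_t$, telescoping the Bregman and dual-distance terms, dividing by $\Gamma_T$, and invoking Jensen to pass from the averaged iterate to $\mathcal{L}_{\nu}(\bar{x}_T,\cdot)$ yields a bound on $\mathbb{E}[Q_{\nu}(\bar{z}_T,z)]$; adding $\nu_0^2L_0n+M_Xn(\sum_i\nu_i^4L_i^2)^{1/2}$ from Lemma~\ref{lemma4} converts it to a bound on $\mathbb{E}[Q(\bar{z}_T,z)]$. For the optimality estimate~\eqref{eq:1.37} I would take $z=(x^*,\mathbf{0})$: since $\bar{y}_T\geqslant\mathbf{0}$ and $f(x^*)\leqslant\mathbf{0}$, the gap dominates $f_0(\bar{x}_T)-f_0(x^*)$, and the dual distance reduces to $\|y_0\|_2^2$. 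For the feasibility estimate~\eqref{eq:1.49} I would instead perturb $y^*$ toward the current violation, $\hat{y}=y^*+(\|y^*\|_2+1)[f(\bar{x}_T)]_+/\|[f(\bar{x}_T)]_+\|_2$; using $[f(\bar{x}_T)]_+^\top f(\bar{x}_T)=\|[f(\bar{x}_T)]_+\|_2^2$ and the saddle inequality $\mathcal{L}(x^*,y^*)\leqslant\mathcal{L}(\bar{x}_T,y^*)$ lower-bounds the gap by $(\|y^*\|_2+1)\|[f(\bar{x}_T)]_+\|_2$, which is the source of the $3(\|y^*\|_2+1)^2\gamma_0\tau_0$ term and of the $\mathcal{H}_*^2=(L_fD_X\|y^*\|_2/2)^2$ contribution inside the gradient-error sum.

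The main obstacle I anticipate is precisely the interaction of the biased, non-uniformly-bounded zeroth-order gradients with the extrapolation step. Unlike the first-order analysis of~\cite{boob2019proximal}, where a single uniform variance bound suffices, here $\delta_t^G$ carries the random multipliers $y_{t+1}$, so its second moment is state-dependent and cannot be pulled out as a constant; keeping it in symbolic form is what lets the lemma stay clean, deferring its expansion in terms of $\|\sigma_{\nu}\|_2^2$ and $\|y_{t+1}\|_2^2$ to the subsequent proof of Theorem~\ref{thm11}. The delicate bookkeeping step is verifying that the mean-zero decoupling genuinely survives despite $y_{t+1}$ being built from same-round randomness, which is exactly what the independent copies $\bar{\xi},\bar{u}$ are engineered to guarantee, and ensuring the linearization-error variance $\sigma_{f,\nu}^2+D_X^2\|\sigma_{\nu}\|_2^2$ appears with the correct $\theta_t$-weighting after telescoping.
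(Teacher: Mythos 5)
Your overall architecture — work entirely with the smoothed Lagrangian $\mathcal{L}_\nu$, run the constraint-extrapolation one-step analysis with the three-point prox inequalities, exploit the independent copies $\bar{\xi},\bar{u}$ so that $\delta_t^G$ and $\delta_{t+1}^F$ are conditionally mean-zero, and only at the end convert $Q_\nu$ to $Q$ via Lemma~\ref{lemma4} — is exactly the paper's route, and the optimality bound~\eqref{eq:1.37} obtained by taking $z=(x^*,\mathbf{0})$ is handled correctly.

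There is, however, a genuine gap in your treatment of the feasibility bound. The comparison point you use, $\hat{y}$ built from $[f(\bar{x}_T)]_+$, is a \emph{random} vector measurable with respect to the entire trajectory, whereas the conditional mean-zero identities $\mathbb{E}[\langle\delta_t^G,x_t-x\rangle]=0$ and $\mathbb{E}[\langle\delta_{t+1}^F,y_{t+1}-y\rangle]=0$ hold only for \emph{non-random} $(x,y)$. If you simply substitute $y=\hat{y}$ into the summed gap inequality and take expectations, the term $\mathbb{E}[\langle\delta_{t+1}^F,y_{t+1}-\hat{y}\rangle]$ does not vanish, and the argument breaks. The paper's fix is the auxiliary (ghost) sequence $y^v_{t+1}=\arg\min_{y\in\mathcal{B}_+^2(R)}\tau_{t-1}^{-1}\langle\delta_t^F,y\rangle+\tfrac{1}{2}\|y-y_t^v\|_2^2$ with $R=\|y^*\|_2+1$: Lemma~\ref{lemma7} gives a bound on $\sum_t\gamma_t\langle\delta_{t+1}^F,y_{t+1}^v-y\rangle$ uniform over $y\in\mathcal{B}_+^2(R)$ at the cost of the extra terms $\tfrac{\gamma_0\tau_0}{2}\|y-y_1^v\|_2^2\leqslant 2\gamma_0\tau_0 R^2$ and $\sum_{t}\tfrac{\gamma_t}{2\tau_t}\|\delta_{t+1}^F\|_2^2$, while the remaining pairing $\langle\delta_{t+1}^F,y_{t+1}-y_{t+1}^v\rangle$ is between two past-measurable iterates and hence has zero mean. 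This is precisely where the isolated $\sum_{t=0}^{T-1}\gamma_t/\tau_t$ variance term and the $3(\|y^*\|_2+1)^2\gamma_0\tau_0$ term in~\eqref{eq:1.49} come from; in your write-up the former is misattributed to a Young split in the main recursion, and the mechanism that legitimizes a random $\hat{y}$ is absent. (Your choice $\hat{y}=y^*+(\|y^*\|_2+1)[f(\bar{x}_T)]_+/\|[f(\bar{x}_T)]_+\|_2$ would also yield $Q(\bar{z}_T,\hat{z})\geqslant\|[f(\bar{x}_T)]_+\|_2$, but it lives in a ball of radius $2\|y^*\|_2+1$ rather than $\|y^*\|_2+1$, so the stated constants would change; the paper uses the pure scaled direction without adding $y^*$.)
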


\begin{proof}[\textbf{Proof of Lemma~\ref{lemma222}}]
First, observe that $y_{t+1}$ is a constant conditioned on random variable $\xi_{[t-1]}, u_{[t-1]}, \bar{\xi}_{[t-1]}, \bar{u}_{[t-1]}$. In particular,
\begin{align}
    \mathbb{E}[\langle \delta_t^G, x_t - x\rangle] = \mathbb{E}\langle \mathbb{E}_{| \xi_{[t-1]}, u_{[t-1]}, \bar{\xi}_{[t-1]}, \bar{u}_{[t-1]}}[\delta_t^G], x_t - x \rangle = 0 \label{eq:1.31}
\end{align}
for any non-random $x$. This follows due to the following relation
\begin{align*}
    &~~\mathbb{E}_{| \xi_{[t-1]}, u_{[t-1]}, \bar{\xi}_{[t-1]}, \bar{u}_{[t-1]}}[\delta_t^G] \\
    =& ~~\mathbb{E}_{| \xi_{[t-1]}, u_{[t-1]}, \bar{\xi}_{[t-1]}, \bar{u}_{[t-1]}}[G_{0, \nu_0}(x_t, \xi_t, u_t) - f_{0, \nu_0}'(x_t)] \\
    &~~~+ \textstyle\sum_{i=1}^my_{t+1}^{(i)}\mathbb{E}_{| \xi_{[t-1]}, u_{[t-1]}, \bar{\xi}_{[t-1]}, \bar{u}_{[t-1]}}[G_{i, \nu_i}(x_t, \xi_t, u_t) - f_{i, \nu_i}'(x_t)] \\
    = &~~ \mathbf{0}.
\end{align*}
Similarly, we have 
\begin{align}
    \mathbb{E}[\langle \delta_{t+1}^F, y_{t+1} - y\rangle]= \mathbb{E}[\langle \mathbb{E}_{| \xi_{[t]}, u_{[t]}, \bar{\xi}_{[t-1]}, \bar{u}_{[t-1]}}[\delta_{t+1}^F], y_{t+1} - y\rangle] = 0, \label{eq:1.32}
\end{align}
for any non-random $y$. Here, we note that 
\begin{align}
    \mathbb{E}_{| \xi_{[t]}, u_{[t]}, \bar{\xi}_{[t-1]}, \bar{u}_{[t-1]}}[\delta_{t+1}^F] & = \mathbb{E}_{| \xi_{[t]}, u_{[t]}, \bar{\xi}_{[t-1]}, \bar{u}_{[t-1]}}[F_{\nu}(x_t, \bar{\xi}_t, \overline{u}_t)] - f_{\nu}(x_t) \notag \\ & + ( \mathbb{E}_{| \xi_{[t]}, u_{[t]}, \bar{\xi}_{[t-1]}, \bar{u}_{[t-1]}}[\mathbf{G}_{\nu}(x_t, \bar{\xi}_t, \bar{u}_t)] - f_{\nu}'(x_t))^T(x_{t+1} - x_t) = \mathbf{0}, \label{eq:1.41}
\end{align}
where the first term in RHS is $\mathbf{0}$ due to $\mathbb{E}_{\xi, u}F_{\nu}(x, \xi, u) = f_{\nu}(x)$, the second term is $\mathbf{0}$ due to the $\mathbb{E}_{\xi, u}\mathbf{G}_{\nu}(x, \xi, u) = f_{\nu}'(x)$ and the common fact for both the terms that $x_t, x_{t+1}$ are constants for given $\xi_{[t]}, u_{[t]}, \bar{\xi}_{[t-1]}, \bar{u}_{[t-1]}$. We now note that
\begin{align}
   \hspace{-0.2in} \mathbb{E}[\| \delta_t^F\|_2^2] & \leqslant 2\mathbb{E}[\| F_{\nu}(x_{t-1}, \bar{\xi}_{t-1}, \bar{u}_{t-1}) - f_{\nu}(x_{t-1})\|_2^2] + 2\mathbb{E}[\| [\mathbf{G}_{\nu}(x_{t-1}, \bar{\xi}_{t-1}, \bar{u}_{t-1}) - f_{\nu}'(x_{t-1})]^T(x_t - x_{t-1}) \|_2^2] \notag \\ 
    & \leqslant 2\sigma_{f, \nu}^2 + 2\mathbb{E}\left[\sum_{i=1}^m\left\{(G_{i, \nu_i}(x_{t-1}, \bar{\xi}_{t-1}, \bar{u}_{t-1}) - f_{i, \nu_i}'(x_{t-1}))^T(x_t - x_{t-1})\right\}^2\right] \notag \\
    & \leqslant 2\sigma_{f, \nu}^2 + 2\mathbb{E}\left[\sum_{i=1}^m\| G_{i, \nu_i}(x_{t-1}, \bar{\xi}_{t-1}, \bar{u}_{t-1}) - f_{i, \nu_i}'(x_{t-1})\|_*^2 \|x_t - x_{t-1}\|^2\right] \notag \\
    & \leqslant 2\sigma_{f, \nu}^2 + 2D_X^2\|\sigma_{\nu}\|_2^2. \label{eq:1.44}
\end{align}
Then, in view of above relation and definitions of $q_t, \bar{q}_t$, and by defining $\delta_t^F\coloneqq  \ell_F(x_t) - \ell_f(x_t)$, we have 
\begin{align}
    \mathbb{E}[\| q_t - \bar{q}_t\|_2^2] & = \mathbb{E}[\| \ell_F(x_t) - \ell_f(x_t) - \ell_F(x_{t-1}) + \ell_f(x_{t-1}) \|_2^2] \notag \\ 
    & \leqslant 2\mathbb{E}[\| \delta_t^F\|_2^2] + 2\mathbb{E}[\| \delta_{t-1}^F\|_2^2] \leqslant 8(\sigma_{f, \nu}^2 + D_X^2\| \sigma_{\nu} \|_2^2). \label{eq:1.33}
\end{align}
Taking the expectation on both sides of \eqref{eq:1.29} and using relation \eqref{eq:1.31}, \eqref{eq:1.32} and \eqref{eq:1.33}, we have for all non-random $z \in \{ (x, y) : x \in X, y \geqslant \mathbf{0} \}$,
\begin{align}
    & \mathbb{E}\left[\textstyle\sum_{t=0}^{T-1}\gamma_tQ_{\nu}(z_{t+1}, z)\right]  \notag \\
     \leqslant & \gamma_0\eta_0W(x,x_0) - \gamma_{T-1}\eta_{T-1}\mathbb{E}[W(x, x_T)] + \frac{\gamma_0\tau_0}{2}\| y - y_0\|_2^2 \notag \\
     + & \textstyle\sum_{t=0}^{T-1}\frac{2\gamma_t}{\eta_t - L_{0} - L_{f}}\left[\mathbb{E}[\| \delta_t^G\|_*^2] + \left(\frac{L_{f}D_X}{2}[\| y \|_2 - 1]_+\right)^2\right] \notag \\
     + & \left(\textstyle\sum_{t=1}^{T-1}\frac{12\gamma_t\theta_t^2}{\tau_t} + \frac{12\gamma_{T-1}}{\tau_{T-1}}\right)(\sigma_{f, \nu}^2 + D_X^2\| \sigma_{\nu}\|_2^2) \label{eq:1.34}
\end{align}
where we dropped $\| y - y_T \|_2^2$. By Lemma \ref{lemma4}, we have 
\begin{align*}
    Q(z_{t+1}, z) - [\nu_0^2L_0n + M_Xn(\textstyle\sum_{i=1}^m\nu_i^4L_i^2)^{1/2}] \leqslant Q_{\nu}(z_{t+1}, z).
\end{align*}

Using this relation, multiplying both sides by $\gamma_t$, summing from $t = 0, \dots, T-1$, and taking expectation on both sides, we have 
\begin{align}
    \mathbb{E}\left[\textstyle\sum_{t=0}^{T-1}\gamma_tQ(z_{t+1}, z)\right] - [\nu_0^2L_0n + M_Xn(\textstyle\sum_{i=1}^m\nu_i^4L_i^2)^{1/2}]\Gamma_T \leqslant \mathbb{E}\left[\sum_{t=0}^{T-1}\gamma_tQ_{\nu}(z_{t+1}, z)\right] \label{eq:1.35}
\end{align}
Using this relation, the convexity of $f_0(\cdot)$ and $f(\cdot)$, and noting the definition of $\Gamma_T$, we have for all non-random $y \geqslant \mathbf{0}$ and $x \in X$,
\begin{align}
    &~~ \Gamma_T\mathbb{E}[f_0(\bar{x}_T) + \langle y, f(\bar{x}_T)\rangle - f_0(x) - \langle \bar{y}_T, f(x)\rangle] - [\nu_0^2L_0n + M_Xn(\textstyle\sum_{i=1}^m\nu_i^4L_i^2)^{1/2}]\Gamma_T \notag \\
     \leqslant &~~\mathbb{E}\left[\textstyle\sum_{t=0}^{T-1}\gamma_tQ(z_{t+1}, z)\right] - [\nu_0^2L_0n + M_Xn(\textstyle\sum_{i=1}^m\nu_i^4L_i^2)^{1/2}] \Gamma_T \notag \\
    \leqslant&~~ \mathbb{E}\left[\textstyle\sum_{t=0}^{T-1}\gamma_tQ_{\nu}(z_{t+1}, z)\right]. \label{eq:1.36}
\end{align}
Combining \eqref{eq:1.34}, \eqref{eq:1.35} and \eqref{eq:1.36}, then choosing $x = x^*$, $y = \mathbf{0}$ (which are non-random) throughout the combined relation, observing that $[0 - 1]_+ = 0$, we have 
\begin{equation}
\begin{aligned}[b]
    &~~ \Gamma_T\mathbb{E}[f_0(\bar{x}_T) - f_0(x^*) - \langle \bar{y}_T, f(x^*)\rangle] - [\nu_0^2L_0n + M_Xn(\textstyle\sum_{i=1}^m\nu_i^4L_i^2)^{1/2}]\Gamma_T \\ \leqslant &~~ \mathbb{E}\left[\textstyle\sum_{t=0}^{T-1}\gamma_tQ_{\nu}(z_{t+1}, (x^*, \mathbf{0}))\right] \\
     \leqslant &~~\gamma_0\eta_0W(x^*, x_0) - \gamma_{T-1}\eta_{T-1}\mathbb{E}[W(x^*, x_T)] + \frac{\gamma_0\tau_0}{2}\|y_0\|_2^2 + \textstyle\sum_{t=0}^{T-1}\frac{2\gamma_t}{\eta_t - L_{0} - L_{f}}\mathbb{E}[\| \delta_t^G\|_*^2] \\  + &~~  \left(\textstyle\sum_{t=1}^{T-1}\frac{12\gamma_t\theta_t^2}{\tau_t} + \frac{12\gamma_{T-1}}{\tau_{T-1}}\right)(\sigma_{f, \nu}^2 + D_X^2\| \sigma_{\nu} \|_2^2)
\end{aligned}
\end{equation}
Ignoring the $\mathbb{E}[W(x^*, x_T)]$ term and noting that $f(x^*) \leqslant \mathbf{0}$ and $\bar{y}_T \geqslant \mathbf{0}$ implies $\langle \bar{y}_T, f(x^*)\rangle \leqslant 0$, we have \eqref{eq:1.37}. \\

\quad Now, we focus our attention to the infeasibility bound. First, we define $R := \|y^*\|_2 + 1$. Second, define an auxilliary sequence $\{y_t^v\}$ in the following way: $y_0^v = y_0$ and for all $t \geqslant 0$, define 
\begin{align*}
    y_{t+1}^v := \arg\min_{y \in \mathcal{B}_+^2(R)}\frac{1}{\tau_{t-1}}\langle \delta_t^F, y\rangle + \frac{1}{2}\| y - y_t^v\|_2^2, 
\end{align*}
where we recall that $\mathcal{B}_+^2(R) = \{ x \in \mathbb{R}^n : \| x \|_2 \leqslant R, x \geqslant \mathbf{0}\}$. Then in view of Lemma \ref{lemma7}, in particular relation \eqref{eq:1.38}, for all $y \in \mathcal{B}_+^2(R)$ we have 
\begin{align}
    \frac{1}{\tau_t}\langle \delta_{t+1}^F, y_{t+1}^v - y\rangle \leqslant \frac{1}{2}\| y - y_{t+1}^v\|_2^2 - \frac{1}{2}\|y - y_{t+2}^v\|_2^2 + \frac{1}{2\tau_t^2}\| \delta_{t+1}^F\|_2^2. \label{eq:1.39}
\end{align}
Multiplying \eqref{eq:1.39} by $\gamma_t\tau_t$, taking a sum from $t = 0$ to $T - 1$ and noting the second relation in \eqref{eq:1.22}, we obtain
\begin{align}
    \textstyle\sum_{t=0}^{T-1}\gamma_t\langle \delta_{t+1}^F, y_{t+1}^v - y\rangle \leqslant \frac{\gamma_0\tau_0}{2}\|y - y_1^v\|_2^2 + \sum_{t=0}^{T-1}\frac{\gamma_t}{2\tau_t}\| \delta_{t+1}^F\|_2^2, \label{eq:1.40}
\end{align}
for all $y \in \mathcal{B}_+^2(R)$. Summing \eqref{eq:1.40} and \eqref{eq:1.29}, we obtain
\begin{align}
    & \textstyle\sum_{t=0}^{T-1}\gamma_tQ_{\nu}(z_{t+1}, z) + \sum_{t=0}^{T-1}\gamma_t[\langle \delta_t^G, x_t - x\rangle - \langle \delta_{t+1}^F, y_{t+1} - y_{t+1}^{v}\rangle] \notag \\
    & \leqslant \frac{\gamma_0\tau_0}{2}[\| y - y_0\|_2^2 + \|y-y_1^v\|_2^2] + \gamma_0\eta_0W(x, x_0) \notag \\
    & + \textstyle\sum_{t=1}^{T-1}\frac{3\gamma_t\theta_t^2}{2\tau_t}\| q_t - \bar{q}_t\|_2^2 + \frac{3\gamma_{T-1}}{2\tau_{T-1}}\| q_T - \bar{q}_T\|_2^2 \notag \\
    & + \textstyle\sum_{t=0}^{T-1}\left[\frac{2\gamma_t}{\eta_t - L_{0} - L_{f}}\left\{\| \delta_t^G\|_*^2 + \left(\frac{L_{f}D_X}{2}[\| y \|_2 - 1]_+\right)^2\right\} + \frac{\gamma_t}{2\tau_t}\| \delta_{t+1}^F\|_2^2\right], \label{eq:1.42}
\end{align}
for all $z \in \{(x, y) : x \in X, y \in \mathcal{B}_+^2(R)\}$. Note that given $\xi_{[t]}, u_{[t]}$ and $\bar{\xi}_{[t-1]}, \bar{u}_{[t-1]}$, we have $y_{t+1}, y_{t+1}^v, x_{t+1}, x_t$ are constants. Hence, we have 
\begin{align}
    \mathbb{E}[\langle \delta_{t+1}^F, y_{t+1} - y_{t+1}^v\rangle] = \mathbb{E}[\langle \mathbb{E}_{| \xi_{[t]}, u_{[t]}, \bar{\xi}_{[t-1]}, \bar{u}_{[t-1]}}[\delta_{t+1}^F] , y_{t+1} - y_{t+1}^v \rangle ] = 0, \label{eq:1.43}
\end{align}
where second equality follows from \eqref{eq:1.41}. Choosing $z = \widehat{z} := (x^*, \widehat{y})$ in \eqref{eq:1.42} where $\widehat{y} := (\|y^*\|_2+1)[f(\bar{x}_T)]_+\| [f(\bar{x}_T)]_+ \|_2^{-1} \in \mathcal{B}_+^2(R)$, taking expectation on both sides and noting \eqref{eq:1.43}, \eqref{eq:1.44}, \eqref{eq:1.33}, first relation in \eqref{eq:1.31}, we have
\begin{align}
    \mathbb{E}\left[\textstyle\sum_{t=0}^{T-1}\gamma_tQ_{\nu}(z_{t+1}, \hat{z})\right] & \leqslant \frac{\gamma_0\tau_0}{2}\mathbb{E}[\| \hat{y} - y_0\|_2^2 + \| \hat{y} - y_1^v \|_2^2] + \gamma_0\eta_0W(x^*, x_0) \notag \\
    & + \textstyle\sum_{t=0}^{T-1}\frac{2\gamma_t}{\eta_t - L_{0} - L_{f}}\left\{\mathbb{E}[\| \delta_t^G\|_*^2] +  \left(\frac{L_{f}D_X}{2}\| y^*\|_2\right)^2\right\} \notag \\
    & + \left(\textstyle\sum_{t=1}^{T-1}\frac{12\gamma_t\theta_t^2}{\tau_t} + \textstyle\sum_{t=0}^{T-1}\frac{\gamma_t}{\tau_t} +  \frac{12\gamma_{T-1}}{\tau_{T-1}}\right)(\sigma_{f, \nu}^2 + D_X^2\| \sigma_{\nu}\|_2^2). \label{eq:1.46}
\end{align}
By Lemma \ref{lemma4}, we then have $Q(z_{t+1}, \hat{z}) - [\nu_0^2L_0n + M_Xn(\textstyle\sum_{i=1}^m\nu_i^4L_i^2)^{1/2}] \leqslant Q_{\nu}(z_{t+1}, \hat{z})$. Multiplying both sides by $\gamma_t$, summing from $t = 0$ to $T-1$, taking expectation of both sides and dividing by $\Gamma_T$, we have 
\begin{align}
    \frac{1}{\Gamma_T}\mathbb{E}\left[\textstyle\sum_{t=0}^{T-1}\gamma_tQ(z_{t+1}, \hat{z})\right] - [\nu_0^2L_0n + M_Xn(\textstyle\sum_{i=1}^m\nu_i^4L_i^2)^{1/2}] \leqslant \frac{1}{\Gamma_T}\mathbb{E}\left[\sum_{t=0}^{T-1}\gamma_tQ_{\nu}(z_{t+1}, \hat{z})\right] \label{eq:1.48}
\end{align}
Noting the convexity of $Q$ in the first argument, we obtain
\begin{align}
    \mathbb{E}[Q(\bar{z}_T, \hat{z})] \leqslant \frac{1}{\Gamma_T}\mathbb{E}\left[\textstyle\sum_{t=0}^{T-1}\gamma_tQ(z_{t+1}, \hat{z})\right]. \label{eq:1.47}
\end{align}
Now observe that we have $\mathcal{L}(\bar{x}_T, y^*) - \mathcal{L}(x^*, y^*) \geqslant 0$ which implies that $f_0(\bar{x}_T) + \langle y^*, f(\bar{x}_T)\rangle - f_0(x^*) \geqslant 0$, which follows from complementary slackness. In view of the relation
\begin{align*}
    \langle y^*, f(\bar{x}_T)\rangle \leqslant \langle y^*, [f(\bar{x}_T)]_+\rangle \leqslant \| y^* \|_2 \| [f(\bar{x}_T)]_+\|_2,
\end{align*}
the above inequality  implies that 
\begin{align}
    f_0(\bar{x}_T) + \| y^*\|_2 \| [f(\bar{x}_T)]_+\|_2 - f_0(x^*) \geqslant 0. \label{eq:1.45}
\end{align}
Moreover, we have that
\begin{align*}
    Q(\bar{z}_T, \hat{z}) = \mathcal{L}(\bar{x}_T, \hat{y}) - \mathcal{L}(x^*, \bar{y}_T) \geqslant \mathcal{L}(\bar{x}_T, \hat{y}) - \mathcal{L}(x^*, y^*) = f_0(\bar{x}_T) + (\| y^* \|_2 + 1)\| [ f(\bar{x}_T)]_+\|_2 - f_0(x^*),
\end{align*}
which along with \eqref{eq:1.45} implies that
\begin{align*}
    Q(\bar{z}_T, \hat{z}) \geqslant \|[f(\bar{x}_T)]_+\|_2.
\end{align*}
The above relation, \eqref{eq:1.46}, \eqref{eq:1.48} and \eqref{eq:1.47} together yield 
\begin{align*}
    \mathbb{E}[\| [f(\bar{x}_T)]_+ \|_2] & \leqslant \frac{1}{\Gamma_T}\biggl[\frac{\gamma_0\tau_0}{2}\mathbb{E}[\|\hat{y} - y_0\|_2^2 + \| \hat{y} - y_1^v \|_2^2] + \gamma_0\eta_0 W(x^*, x_0) \\ & \quad + \textstyle\sum_{t=0}^{T-1}\frac{2\gamma_t}{\eta_t - L_{0} - L_{f}}\left\{\mathbb{E}[\| \delta_t^G\|_*^2] + \left(\frac{L_{f}D_X}{2}\|y^*\|_2\right)^2\right\} \\ 
    & \quad + \left(\textstyle\sum_{t=1}^{T-1}\frac{12\gamma_t\theta_t^2}{\tau_t} + \sum_{t=0}^{T-1}\frac{\gamma_t}{\tau_t} + \frac{12\gamma_{T-1}}{\tau_{T-1}}\right)(\sigma_{f, \nu}^2 + D_X^2\|\sigma_{\nu}\|_2^2)\biggr] \\ 
    & \quad+ [\nu_0^2L_0n + M_Xn(\textstyle\sum_{i=1}^m\nu_i^4L_i^2)^{1/2}].
\end{align*}
Noting the bound $\| \hat{y} - y_1^v\| \leqslant 2R$ and $\| \hat{y} - y_0\|_2^2 \leqslant 2\|y_0\|_2^2 + 2\|\hat{y}\|_2^2 \leqslant 2\|y_0\|_2^2 + 2R^2$ in the above relation and recalling that $R = \|y^*\|_2 + 1$, we obtain \eqref{eq:1.49}. Hence, we conclude the proof. 
\end{proof}
We next bound the term  $\mathbb{E}[\| \delta_t^G \|_*^2]$ appearing in the previous result in the zeroth-order setting. This result is crucial for obtaining a linear dependency on the number of constraints $m$ for our oracle complexity results and is based on our Lemma \ref{lemma4}.
\begin{lemma} \label{lemma10}
Assume that $\{ \gamma_t, \tau_t, \eta_t \}$ satisfy
\begin{align}
    \frac{96\| \sigma_{\nu} \|_2^2}{\tau_t(\eta_t - L_{0} - L_{f})} < 1, \label{eq:1.52}
\end{align}
for all $t \leqslant T-1$ and constants $R_1$ and $R_2$ satisfying the following conditions exist:
\begin{equation}
\begin{aligned}[b]
    R_1 & \geqslant \left(1 - \frac{96 \| \sigma_{\nu}\|_2^2}{\tau_t(\eta_t - L_{0} - L_{f})}\right)^{-1}\biggl[2\sigma_{0, \nu_0}^2  + \frac{48\| \sigma_{\nu}\|_2^2}{\gamma_t\tau_t}\biggl\{\gamma_0\eta_0W(x^*, x_0) + \frac{\gamma_0\tau_0}{2}\| y^* - y_0\|_2^2 + \frac{\gamma_t\tau_t}{12}\|y^*\|_2^2 \\ & + \textstyle\sum_{i=0}^t\frac{2\gamma_i}{\eta_i - L_{0} - L_{f}}\left(\frac{L_{f}D_X}{2}[\|y^*\|_2 - 1]_+\right)^2 + \left(\sum_{i=1}^t\frac{12\gamma_i\theta_i^2}{\tau_i} + \frac{12\gamma_t}{\tau_t}\right)(\sigma_{f, \nu}^2 + D_X^2\|\sigma_{\nu}\|_2^2) \\ & + [\nu_0^2L_0n + M_Xn(\textstyle\sum_{i=1}^m\nu_i^4L_i^2)^{1/2}]\Gamma_{t+1}\biggr\}\biggr] \label{eq:1.55}
\end{aligned}
\end{equation}
for all $t \leqslant T-1$ and 
\begin{align}
    R_2 \geqslant \left(1 - \frac{96\| \sigma_{\nu}\|_2^2}{\tau_t(\eta_t - L_{0} - L_{f})}\right)^{-1}\frac{96\| \sigma_{\nu}\|_2^2\gamma_i}{\gamma_t\tau_t(\eta_i - L_{0} - L_{f})} \label{eq:1.56}
\end{align}
for all $t \leqslant T-1$ and $i \leqslant t-1$. Then, we have 
\begin{align}
    \mathbb{E}[\| \delta_t^G \|_*^2] \leqslant R_1(1 + R_2)^t, \label{eq:1.53}
\end{align}
for all $t \leqslant T-1$. In particular, if $\| \sigma_{\nu}\|_2 = 0$, then we can set $R_1 = 2\sigma_{0, \nu_0}^2$ and $R_2 = 0$ implying $\mathbb{E}[\| \delta_t^G\|_*^2] \leqslant 2\sigma_{0, \nu_0}^2$.
\end{lemma}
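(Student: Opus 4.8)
The plan is to turn the definition of $\delta_t^G$ into a self-referential recursion in $\mathbb{E}[\|\delta_t^G\|_*^2]$ and then unwind it by induction. The whole difficulty is that $\delta_t^G$ contains the random, \emph{a priori unbounded} dual iterate $y_{t+1}$ multiplying the constraint-gradient errors, so its second moment is not uniformly bounded; this is exactly the obstruction that does not arise in the first-order analysis of~\cite{boob2019proximal}.

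First I would exploit the timing of the algorithm. Writing $\mathcal{F}_t \coloneqq \sigma(\xi_{[t-1]}, u_{[t-1]}, \bar\xi_{[t-1]}, \bar u_{[t-1]})$, Steps 3--4 show that both $x_t$ and $y_{t+1}$ are $\mathcal{F}_t$-measurable, whereas the estimators $G_{i,\nu_i}(x_t,\xi_t,u_t)$ used in Step 5 involve the fresh, independent draws $(\xi_t,u_t)$. Hence, conditionally on $\mathcal{F}_t$, each $G_{i,\nu_i}(x_t,\xi_t,u_t)-f_{i,\nu_i}'(x_t)$ is mean zero with conditional second moment at most $\sigma_{i,\nu_i}^2$ by Lemma~\ref{thm:gradestrate}. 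Splitting $\delta_t^G$ into its objective part and its constraint part and using $\|a+b\|_*^2\le 2\|a\|_*^2+2\|b\|_*^2$, the triangle inequality $\|\sum_i y_{t+1}^{(i)}v_i\|_*\le\sum_i|y_{t+1}^{(i)}|\,\|v_i\|_*$, and Cauchy--Schwarz, I obtain the key conditional estimate $\mathbb{E}[\|\delta_t^G\|_*^2\mid\mathcal{F}_t]\le 2\sigma_{0,\nu_0}^2+2\|\sigma_\nu\|_2^2\|y_{t+1}\|_2^2$, and after taking total expectation, $\mathbb{E}[\|\delta_t^G\|_*^2]\le 2\sigma_{0,\nu_0}^2+2\|\sigma_\nu\|_2^2\,\mathbb{E}[\|y_{t+1}\|_2^2]$. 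When $\|\sigma_\nu\|_2=0$ the second term vanishes, giving the stated special case $\mathbb{E}[\|\delta_t^G\|_*^2]\le2\sigma_{0,\nu_0}^2$ with $R_1=2\sigma_{0,\nu_0}^2$ and $R_2=0$.

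Next I would control $\mathbb{E}[\|y_{t+1}\|_2^2]$ through the primal--dual machinery already in place. Applying the summation relation~\eqref{eq:1.29} over the first $t{+}1$ iterations (rather than all $T$), I would this time \emph{retain} the negative term $-\tfrac{\gamma_t\tau_t}{2}\|y-y_{t+1}\|_2^2$ that was discarded in deriving~\eqref{eq:1.34}, evaluate at $z=(x^*,y^*)$, and lower-bound $\sum_{i}\gamma_iQ_\nu(z_{i+1},(x^*,y^*))$ using the saddle-point property of $\mathcal{L}$ together with the smoothing-error bound of Lemma~\ref{lemma4}. Combining this with $\|y_{t+1}\|_2^2\le 2\|y^*\|_2^2+2\|y^*-y_{t+1}\|_2^2$ gives a bound of the form $\mathbb{E}[\|y_{t+1}\|_2^2]\le \tfrac{24}{\gamma_t\tau_t}\{\cdots\}+\tfrac{24}{\gamma_t\tau_t}\sum_{i=0}^{t}\tfrac{2\gamma_i}{\eta_i-L_0-L_f}\mathbb{E}[\|\delta_i^G\|_*^2]$, where $\{\cdots\}$ is precisely the bracketed quantity in~\eqref{eq:1.55} (the term $\tfrac{\gamma_t\tau_t}{12}\|y^*\|_2^2$ there accounting for the $2\|y^*\|_2^2$ contribution).

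Finally I would close the recursion. Substituting the previous display into $\mathbb{E}[\|\delta_t^G\|_*^2]\le 2\sigma_{0,\nu_0}^2+2\|\sigma_\nu\|_2^2\mathbb{E}[\|y_{t+1}\|_2^2]$ produces the coefficient $\tfrac{48\|\sigma_\nu\|_2^2}{\gamma_t\tau_t}$ on $\{\cdots\}$ and $\tfrac{96\|\sigma_\nu\|_2^2\gamma_i}{\gamma_t\tau_t(\eta_i-L_0-L_f)}$ on each $\mathbb{E}[\|\delta_i^G\|_*^2]$. The crux is that the sum includes the index $i=t$ itself; its coefficient is $\tfrac{96\|\sigma_\nu\|_2^2}{\tau_t(\eta_t-L_0-L_f)}$, which is strictly below $1$ by hypothesis~\eqref{eq:1.52}, so I can move that term to the left-hand side and divide by $(1-\tfrac{96\|\sigma_\nu\|_2^2}{\tau_t(\eta_t-L_0-L_f)})$. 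By the definitions~\eqref{eq:1.55} and~\eqref{eq:1.56}, what remains is exactly $\mathbb{E}[\|\delta_t^G\|_*^2]\le R_1+R_2\sum_{i=0}^{t-1}\mathbb{E}[\|\delta_i^G\|_*^2]$. A straightforward induction (base case $t=0$ gives $\mathbb{E}[\|\delta_0^G\|_*^2]\le R_1$, and the inductive step uses $R_1+R_2\sum_{i=0}^{t-1}R_1(1+R_2)^i=R_1(1+R_2)^t$) then yields~\eqref{eq:1.53}. The main obstacle throughout is the feedback loop between $\|\delta_t^G\|_*^2$ and $\|y_{t+1}\|_2^2$; condition~\eqref{eq:1.52} is precisely what turns this self-loop into a contraction so that the geometric bound closes.
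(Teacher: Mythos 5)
Your proposal is correct and follows essentially the same route as the paper's proof: your conditional variance decomposition $\mathbb{E}[\|\delta_t^G\|_*^2]\leqslant 2\sigma_{0,\nu_0}^2+2\|\sigma_\nu\|_2^2\,\mathbb{E}[\|y_{t+1}\|_2^2]$ is the paper's \eqref{eq:1.51}, your dual-iterate bound obtained by running the telescoping of Lemma~\ref{lemma6} up to horizon $t+1$ while retaining the negative $\|y^*-y_{t+1}\|_2^2$ term and invoking Lemma~\ref{lemma4} is the paper's \eqref{eq:1.50}, and your closure of the self-referential recursion via \eqref{eq:1.52} followed by the geometric induction is exactly the paper's application of Lemma~\ref{lemma9}. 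The only slip is cosmetic: the retained dual term in \eqref{eq:1.29} carries coefficient $\gamma_{t}\tau_{t}/12$ rather than $\gamma_t\tau_t/2$, which your downstream constants ($24/\gamma_t\tau_t$ and $48\|\sigma_\nu\|_2^2/\gamma_t\tau_t$) already correctly reflect.
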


\begin{proof}[\textbf{Proof of Lemma~\ref{lemma10}}]
First note that by Lemma \ref{lemma4}, we have 
\begin{align*}
    Q(z_{i+1}, z) - [\nu_0^2L_0n + M_Xn(\textstyle\sum_{i=1}^m\nu_i^4L_i^2)^{1/2}] \leqslant Q_{\nu}(z_{i+1}, z)
\end{align*}
Multiplying the above by $\gamma_i$ and summing up $i = 0$ to $t$, we have 
\begin{align*}
    \textstyle\sum_{i=0}^{t}\gamma_iQ(z_{i+1}, z) - [\nu_0^2L_0n + M_Xn(\textstyle\sum_{i=1}^m\nu_i^4L_i^2)^{1/2}]\Gamma_{t+1} \leqslant \sum_{i=0}^t\gamma_iQ_{\nu}(z_{i+1}, z)
\end{align*}
Replacing $T$ for $t+1 (\geqslant 1)$ in \eqref{eq:1.29}, we have 
\begin{align}
    & \textstyle\sum_{i=0}^{t}\gamma_iQ_{\nu}(z_{i+1}, z) + \sum_{i=0}^{t}\gamma_i[\langle \delta_i^G, x_i - x\rangle - \langle \delta_{i+1}^F, y_{i+1} - y\rangle] \notag \\
    & \leqslant \gamma_0 \eta_0 W(x, x_0) - \gamma_{t}\eta_{t}W(x, x_{t+1}) + \frac{\gamma_0\tau_0}{2}\| y - y_0 \|_2^2 - \frac{\gamma_{t}\tau_{t}}{12}\|y - y_{t+1}\|_2^2 \notag \\
    & + \textstyle\sum_{i=0}^{t}\frac{2\gamma_i}{\eta_i - L_{0} - L_{f}}\left[\| \delta_i^G\|_*^2 + \left(\frac{L_{f}D_X}{2}[\|y\|_2 - 1]_+\right)^2\right] \notag \\
    & + \textstyle\sum_{i=1}^{t}\frac{3\gamma_i\theta_i^2}{2\tau_i}\|q_i - \bar{q}_i\|_2^2 + \frac{3\gamma_{t}}{2\tau_{t}}\|q_{t+1} - \bar{q}_{t+1}\|_2^2.
\end{align}
Observe that $Q(z_{i+1}, z^*) \geqslant 0$ for $i = 0, \dots, t$ by our saddle point assumption where $z^* = (x^*, y^*)$. Choosing $z = z^*$ (both non-random) in the above relations, taking expectation, using \eqref{eq:1.31} with $x = x^*$ and \eqref{eq:1.32} with $y = y^*$, disregarding the term $-\gamma_t\eta_t\mathbb{E}[W(x^*, x_{t+1})]$ and noting \eqref{eq:1.33}, we have the following inequality
\begin{align}
 &~~   -[\nu_0^2L_0n + M_Xn(\textstyle\sum_{i=1}^m\nu_i^4L_i^2)^{1/2}]\Gamma_{t+1} + \frac{\gamma_t\tau_t}{12}\mathbb{E}\| y^* - y_{t+1}\|_2^2  \\
   & \leqslant \gamma_0\eta_0W(x^*, x_0) + \frac{\gamma_0\tau_0}{2}\| y^* - y_0\|^2 \notag \\ & + \textstyle\sum_{i=0}^t\frac{2\gamma_i}{\eta_i - L_{0} - L_{f}}\left[\mathbb{E}[\|\delta_i^G\|_*^2] + \left(\frac{L_{f}D_X}{2}[\|y^*\|_2 - 1]_+\right)^2\right] \notag \\ & + \left(\textstyle\sum_{i=1}^t\frac{12\gamma_i\theta_i^2}{\tau_i} + \frac{12\gamma_t}{\tau_t}\right)(\sigma_{f, \nu}^2 + D_X^2\| \sigma_{\nu}\|_2^2) \label{eq:1.50}
\end{align}
Now, let us define $\delta_{t, i}^G := G_{i, \nu_i}(x_t, \xi_t, u_t) - f_{i, \nu_i}'(x_t)$ for $i = 0, \dots, m$. As a consequence, we have $\delta_t^G = \delta_{t, 0}^G + \sum_{i=1}^my_{t+1}^{(i)}\delta_{t, i}^G$. Then, we have 
\begin{align}
    \mathbb{E}[\| \delta_t^G \|_*^2] & = \mathbb{E}[\| \delta_{t, 0}^G + \textstyle\sum_{i=1}^my_{t+1}^{(i)}\delta_{t, i}^G\|_*^2] \notag \\ & \overset{(i)}{\leqslant} 2\mathbb{E}[\| \delta_{t, 0}^G\|_*^2] + 2\mathbb{E}[\| \textstyle\sum_{i=1}^m y_{t+1}^{(i)}\delta_{t, i}^G \|_*^2] \notag \\ 
    & \leqslant 2\mathbb{E}[\| \delta_{t, 0}^G\|_*^2] + 2\mathbb{E}[(\textstyle\sum_{i=1}^m\| y_{t+1}^{(i)}\delta_{t, i}^G\|)^2] \notag \\
    & \overset{(ii)}{\leqslant} 2[\sigma_{0, \nu_0}^2 + \mathbb{E}[\| y_{t+1}\|_2^2(\textstyle\sum_{i=1}^m\| \delta_{t, i}^G\|_*^2)]] \notag \\
    & \overset{(iii)}{\leqslant} 2[\sigma_{0, \nu_0}^2 + \mathbb{E}[\|y_{t+1}\|_2^2(\textstyle\sum_{i=1}^m\mathbb{E}_{| \xi_{[t-1]}, u_{[t-1]}, \bar{\xi}_{[t-1]}, \bar{u}_{[t-1]}}[\| \delta_{t, i}^G\|_*^2])]] \notag \\
    & \overset{(iv)}{\leqslant} 2[\sigma_{0, \nu_0}^2 + \mathbb{E}[\| y_{t+1} \|_2^2\textstyle\sum_{i=1}^m\sigma_{i, \nu_i}^2]] \notag \\
    & = 2(\sigma_{0, \nu_0}^2 + \| \sigma_{\nu}\|_2^2 \mathbb{E}\| y_{t+1} \|_2^2) \notag \\
    & \leqslant 2\sigma_{0, \nu_0}^2 + 4\| \sigma_{\nu}\|_2^2(\| y^*\|_2^2 + \mathbb{E}[\| y_{t+1} - y^*\|_2^2]). \label{eq:1.51}
\end{align}
Here, relation (i) follows due to the fact that $\| a + b \|_*^2 \leqslant (\| a\|_* + \|b\|_*)^2 \leqslant 2\|a\|_*^2 + 2\|b\|_*^2$, relation (ii) follows due to Cauchy-Schwarz inequality, relation (iii) follows due to the fact that $y_{t+1}$ is a constant conditioned on random variables $\xi_{[t-1]}, u_{[t-1]}, \bar{\xi}_{[t-1]}, \bar{u}_{[t-1]}$ and relation (iv) follows from the fact that $x_t$ is a constant conditioned on random variables $\xi_{[t-1]}, u_{[t-1]}, \bar{\xi}_{[t-1]}, \bar{u}_{[t-1]}$. \\

Adding $\frac{\gamma_t\tau_t}{12}\|y^*\|_*^2$ to both sides of \eqref{eq:1.50}, then multiplying it by $\frac{48\| \sigma_{\nu}\|_2^2}{\gamma_t\tau_t}$ and observing \eqref{eq:1.51}, we have 
\begin{align*}
    \mathbb{E}[\| \delta_t^G\|_*^2] & \leqslant 2\sigma_{0, \nu_0}^2 + \frac{48\| \sigma_{\nu}\|_2^2}{\gamma_t\tau_t}\biggl\{\gamma_0\eta_0W(x^*, x_0) + \frac{\gamma_0\tau_0}{2}\| y^* - y_0\|_2^2 + \frac{\gamma_t\tau_t}{12}\|y^*\|_2^2 \\ 
    &~~~~ + \textstyle\sum_{i=0}^t\frac{2\gamma_i}{\eta_i - L_{0} - L_{f}}\left(\frac{L_{f}D_X}{2}[\|y^*\|_2 - 1]_+\right)^2 \\
    &~~~~ + \left(\textstyle\sum_{i=1}^t\frac{12\gamma_i\theta_i^2}{\tau_i} + \frac{12\gamma_t}{\tau_t}\right)(\sigma_{f, \nu}^2 + D_X^2\| \sigma_{\nu}\|_2^2) + [\nu_0^2L_0n + M_Xn(\textstyle\sum_{i=1}^m\nu_i^4L_i^2)^{1/2}]\Gamma_{t+1} \biggr\} \\ & ~~~~+ \textstyle\sum_{i=0}^t\frac{96\| \sigma_{\nu}\|_2^2\gamma_i}{\gamma_t\tau_t(\eta_i - L_{0} - L_{f})}\mathbb{E}[\| \delta_i^G\|_*^2].
\end{align*}
In view of \eqref{eq:1.52}, we have that the coefficient of the $\delta_t^G$ term on the right hand side of the above relation is strictly less than $1$. Moving the $\delta_t^G$ term to the left hand side and noting the conditions imposed on constants $R_1, R_2$, we have $$\mathbb{E}[\| \delta_t^G \|_*^2] \leqslant R_1 + R_2\textstyle\sum_{i=0}^{t-1}\mathbb{E}[\| \delta_i^G \|_*^2],$$ for all $t \leqslant T-1$. Using Lemma \ref{lemma9} for the above relation, we have \eqref{eq:1.53}. Hence we conclude the proof. 
\end{proof}
We are now ready to prove Theorem~\ref{thm11}. \textcolor{black}{Before we proceed, we remark that we the results from Lemma~\ref{lemma6} in Section~\ref{sec:aux} for the proof.}
\begin{proof}[\textbf{Proof of Theorem \ref{thm11}}] 
 It is easy to verify that $\{\gamma_t, \theta_t, \eta_t, \tau_t\}$ set according to Theorem~\ref{thm11} satisfies \eqref{eq:1.22}. Note that \eqref{eq:1.25} is satisfied if $4M_f^2 \leqslant \frac{\tau_t(\eta_{t-2} - L_{0} - L_{f})}{12}$. This follows due to the fact that $\{\eta_t\}$ is a non-decreasing sequence, $\theta_t = 1$ for all $t \geqslant 0$. Then we have 
 \begin{align*}
     \frac{\tau_t(\eta_{t-2} - L_{0} - L_{f})}{12} & \geqslant \frac{4 M_f}{D_X}12M_fD_X \times \frac{1}{12} = 4M_f^2
 \end{align*}
Also, since $(\eta_t - L_{0} - L_{f}) \geqslant \frac{24\|\sigma_{\nu}\|_2}{D_X}$ and $\tau_t \geqslant 8D_X\|\sigma_{\nu}\|_2$, we have
\begin{align*}
    \tau_t(\eta_t - L_{0} - L_{f}) \geqslant 192\|\sigma_{\nu}\|_2^2
\end{align*}
for all $t \geqslant 0$. In view of the above relation, we have 
\begin{align}
    \frac{96\|\sigma_{\nu}\|_2^2}{\tau_t(\eta_t - L_{0} - L_{f})} \leqslant \frac{1}{2}, \label{eq:1.57}
\end{align}
hence \eqref{eq:1.52} is satisfied. We also need to show the existence of $R_1$ and $R_2$ satisfying \eqref{eq:1.55} and \eqref{eq:1.56}, respectively. Using the fact that $\gamma_t, \eta_t$ and $\tau_t$ are constants for all $t \geqslant 0, \tau \eta \geqslant \frac{96T\sigma_{X, f}\|\sigma_{\nu}\|_2}{D_X}$ and noting \eqref{eq:1.57}, we obtain
\begin{align*}
    \left(1 - \frac{96\|\sigma_{\nu}\|_2^2}{\tau_t(\eta_t - L_{0} - L_{f})}\right)^{-1}\frac{96\|\sigma_{\nu}\|_2^2\gamma_i}{\gamma_t\tau_t(\eta_i - L_{0} - L_{f})} \leqslant 2 \frac{96\|\sigma_{\nu}\|_2^2}{\tau \eta} \leqslant 2\frac{\|\sigma_{\nu}\|_2D_X}{T\sigma_{X, f}} \leqslant \frac{2}{T},
\end{align*}
where in the last relation, we used the fact that $\sigma_{X, f} \geqslant D_X\|\sigma_{\nu}\|_2$. In view of the above relation and \eqref{eq:1.56}, we can set 
\begin{align}
    R_2 := \frac{2}{T}. \label{eq:1.58}
\end{align}
Noting \eqref{eq:1.55} along with the fact that $\mathcal{H}_* \geqslant \frac{L_{f}D_X[\|y^*\|_2 - 1]_+}{2}$, setting $y_0 = \mathbf{0}$, using \eqref{eq:1.57}, \eqref{eq:1.52}, $\gamma_t \tau_t = \tau \geqslant \sqrt{96T}\sigma_{X, f}, \sum_{i=0}^t\frac{\gamma_i}{\eta_i - L_{0} - L_{f}} = \frac{t+1}{\eta} \leqslant \frac{\sqrt{T}D_X}{\sqrt{2[\mathcal{H}_*^2 + \sigma_{0, \nu_0}^2 + 48\|\sigma_{\nu}\|_2^2]}}$, and $\sum_{i=1}^t\frac{\gamma_i\theta_i^2}{\tau_i} + \frac{\gamma_t}{\tau_t} = \frac{t+1}{\tau} \leqslant \frac{T}{\tau}$ for all $t \leqslant T-1$, we can see that the RHS of \eqref{eq:1.55} is at most 
\begin{equation}
\begin{aligned}[b]
   &  \hspace{-0.15in}2\biggl[2\sigma_{0, \nu_0}^2 + 48\|\sigma_{\nu}\|_2^2\biggl\{\frac{7}{12}\|y^*\|_2^2 + \frac{\eta}{\tau}D_X^2 + \frac{\sqrt{2T}D_X\mathcal{H}_*^2}{\sqrt{\mathcal{H}_*^2 + \sigma_{0, \nu_0}^2 + 48\|\sigma_{\nu}\|_2^2}}\frac{1}{\sqrt{96T}\sigma_{X, f}} + 12\sigma_{X, f}^2\frac{T}{\tau^2} \\ &\hspace{-0.2in} + \frac{[\nu_0^2L_0n + M_Xn(\textstyle\sum_{i=1}^m\nu_i^4L_i^2)^{1/2}]\sqrt{T}}{4\sqrt{6}\sigma_{X,f}}\biggr\}\biggr] \\
   & \hspace{-0.3in}\leqslant 2\left[2\sigma_{0, \nu_0}^2 + 48\|\sigma_{\nu}\|_2^2\left\{\frac{7}{12}\|y^*\|_2^2 + \frac{\eta}{\tau}D_X^2 + \frac{D_X\mathcal{H}_*}{\sqrt{48}\sigma_{X, f}} + 12T\sigma_{X, f}^2\frac{1}{96T\sigma_{X, f}^2} + \frac{[\nu_0^2L_0n + M_Xn(\textstyle\sum_{i=1}^m\nu_i^4L_i^2)^{1/2}]\sqrt{T}}{4\sqrt{6}\sigma_{X, f}}\right\}\right] \\
   &\hspace{-0.3in} \leqslant 2\biggl[2\sigma_{0, \nu_0}^2 + 48\|\sigma_{\nu}\|_2^2\biggl\{\frac{7}{12}\|y^*\|_2^2 + \frac{D_X}{\sigma_{X, f}}\left(\sqrt{\frac{[\mathcal{H}_*^2 + \sigma_{0, \nu_0}^2 + 48B^2\|\sigma_{\nu}\|_2^2]}{48}} + \frac{\mathcal{H}_*}{\sqrt{48}}\right)\\
   & \hspace{-0.3in}~~~~ + \frac{6\max\{2M_f, 4\|\sigma_{\nu}\|_2\}D_X}{2\max\{2M_f, 4\|\sigma_{\nu}\|_2\}}\frac{1}{D_X} + \frac{1}{8} + \frac{[\nu_0^2L_0n + M_Xn(\textstyle\sum_{i=1}^m\nu_i^4L_i^2)^{1/2}]\sqrt{T}}{4\sqrt{6}\sigma_{X, f}}\biggr\}\biggr] \\
   & \hspace{-0.3in}\leqslant 2\biggl[2\sigma_{0, \nu_0}^2 + 28\|\sigma_{\nu}\|_2^2\|y^*\|_2^2 + 150\|\sigma_{\nu}\|_2^2 + \sqrt{48}\|\sigma_{\nu}\|_2[2\mathcal{H}_* + (\sigma_{0, \nu_0} + \sqrt{48}\|\sigma_{\nu}\|_2)]  \\ & \hspace{-0.2in}+ 2\sqrt{6}D_X^{-1}\| \sigma_{\nu}\|_2  [\nu_0^2L_0n + M_Xn(\textstyle\sum_{i=1}^m\nu_i^4L_i^2)^{1/2}]\sqrt{T}\biggr] \\ 
   &\hspace{-0.3in} =: R_1
\end{aligned}
\end{equation}
where in the last inequality, we used the fact that $\frac{\|\sigma_{\nu}\|_2D_X}{\sigma_{X, f}} \leqslant 1$. Note that the last term in the above sequence of relations is a constant satisfying the requirement in \eqref{eq:1.55}. Hence, we can set 
\begin{align}
    R_1 := 2\biggl[2\sigma_{0, \nu}^2 + 28\|\sigma_{\nu}\|_2^2\|y^*\|_2^2 + 150\|\sigma_{\nu}\|_2^2 + \sqrt{48}\|\sigma_{\nu}\|_2[2\mathcal{H}_* + (\sigma_{0, \nu} + \sqrt{48}\|\sigma_{\nu}\|_2)] \notag \\ + 2\sqrt{6}D_X^{-1}\|\sigma_{\nu}\|_2[\nu_0^2L_0n + M_Xn(\textstyle\sum_{i=1}^m\nu_i^4L_i^2)^{1/2}]\sqrt{T}\biggr] \label{eq:1.59}
\end{align}
Then using Lemma \ref{lemma10} and noting \eqref{eq:1.58}, we have for all $t \leqslant T-1$
\begin{align*}
    \mathbb{E}[\|\delta_t^G\|_*^2] \leqslant \begin{cases} 4\sigma_{0, \nu_0}^2 & \text{ if } \|\sigma_{\nu}\|_2  = 0; \\ R_1\left(1 + \frac{2}{T}\right)^{T-1} \leqslant R_1e^2 & \text{ otherwise. } \end{cases}
\end{align*}
Noting the above relation, \eqref{eq:1.59} and the definition of $\zeta$, we have 
\begin{align}
    \mathbb{E}[\| \delta_t^G\|_*^2] \leqslant \zeta^2, \quad \forall t \leqslant T-1. \label{eq:1.60}
\end{align}
Hence, according to \eqref{eq:1.37} with $y_0 = \mathbf{0}$ and using \eqref{eq:1.60}, we have
\begin{align*}
    \mathbb{E}[f_0(\bar{x}_T) - f_0(x^*)] &\leqslant \frac{1}{T}\left[(\eta + L_{0} + L_{f})W(x^*, x_0) + \frac{2T\zeta^2}{\eta} + 12\sigma_{X, f}^2\frac{T}{\tau}\right] \\ &~~~+ [\nu_0^2L_0n + M_Xn(\textstyle\sum_{i=1}^m\nu_i^4L_i^2)^{1/2}].
\end{align*}
Using the bound $W(x^*, x_0) \leqslant D_X^2$, we obtain \eqref{eq:1.61}. From \eqref{eq:1.49} and \eqref{eq:1.60}, we have for $T \geqslant 1$
\begin{align*}
    \mathbb{E}\| [f(\bar{x}_T)]_+ \|_2 &\leqslant \frac{1}{T}\left[3(\|y^*\|_2 + 1)^2\tau + (\eta + L_{0} + L_{f})W(x^*, x_0) + \frac{2(\zeta^2 + \mathcal{H}_*^2)T}{\eta} + \frac{13\sigma_{X, f}^2T}{\tau} \right] \\
    &~~~~+ [\nu_0^2L_0n + M_Xn(\textstyle\sum_{i=1}^m\nu_i^4L_i^2)^{1/2}].
\end{align*}
Using bounds $W(x^*, x_0) \leqslant D_X^2$, we obtain \eqref{eq:1.62}. Define
\begin{align}
    \bar{\sigma_f}^2 & := 2(1+\sigma_f^2) \\
    \bar{\sigma}_0^2 & := 1 + 10(n+4)[\sigma_0^2 + [L_0(1+D_X) + M_{0}]^2] \\
    \bar{\sigma_i}^2 & := \frac{1}{m} + 10(n+4)[\sigma_i^2 + [L_i(1+D_X) + M_{i}]^2] \quad \text{ for } i \in \{1, \dots, m] \\
    \bar{\sigma}^2 & = 1 + 10(n+4)[ \|\sigma\|_2^2 + 2L_f^2(1+D_X)^2 + 2M_f^2] \\
    \overline{\sigma_{X, f}} & = (2(1+\sigma_f^2) + D_X^2\bar{\sigma}^2)^{1/2} \\
    \overline{\zeta} & := 2e\left\{\bar{\sigma_0}^2 + \overline{\sigma}^2(14\|y^*\|_2^2 + 75) + 2\sqrt{3}\overline{\sigma}(2\mathcal{H}_* + \bar{\sigma}_0 + \sqrt{48}\overline{\sigma}) + \sqrt{6}D_{X}^{-1}\overline{\sigma}\right\}^{1/2}.
\end{align}
By choice of $\nu_0, \nu_i$ for $i \in [m]$, definition of $\sigma_{f, \nu}^2$, $\tilde{B}_i$, $\sigma_{i, \nu_i}^2$, and $\sigma_{\nu}$, we have 
\begin{align*}
    \sigma_{f, \nu}^2 & \leqslant 2 + 2\sigma_f^2 =: \overline{\sigma}_{f}^2 \\
    \tilde{B}_i & \leqslant L_i(1 + D_X) + M_{i} \\
    \sigma_{0, \nu_0}^2 & \leqslant 1 + 10(n+4)[\sigma_0^2 + [L_0(1+D_X) + M_{f, 0}]^2] \\
    \sigma_{i, \nu_i}^2 & \leqslant \frac{1}{m} + 10(n+4)[\sigma_i^2 + [L_i(1+D_X) + M_{f,i}]^2] =: \overline{\sigma}_i^2 \quad \text{ for } i \in [m] \\
    \|\sigma_{\nu}\|_2^2 & \leqslant 1  + 10(n+4)[\| \sigma\|_2^2 + 2L_f^2(1+D_X)^2 + 2M_f^2] =: \overline{\sigma}^2.
\end{align*}
Furthermore, we also have that  $\nu_0^2L_0n + M_Xn\left(\textstyle\sum_{i=1}^m\nu_i^4L_i^2\right)^{1/2}  \leqslant \frac{1}{\sqrt{T}} $. Using these relations, we see that $\sigma_{X, f}  \leqslant \overline{\sigma_{X, f}}$ and $\zeta  \leqslant \overline{\zeta}$. Hence, we have 
\begin{align}
    \mathbb{E}[f_0(\bar{x_T}) - f_0(x^*)] & \leqslant \frac{(L_0+L_f)D_X^2 + \max\{12M_f, 24\overline{\sigma}\}D_X}{T} + \frac{1}{\sqrt{T}}\left[\sqrt{2(\mathcal{H}_*^2 + \bar{\sigma}_0^2 + 48\overline{\sigma}^2)}D_X + 1\right]\notag\\
    &\quad + \frac{1}{\sqrt{T}}\left\{\frac{\sqrt{2} D_X\overline{\zeta^2}}{\sqrt{\mathcal{H}_*^2 + \sigma_0^2 + 48\|\sigma\|_2^2}} + \frac{\sqrt{3}\overline{\sigma_{X,f}}}{\sqrt{2}}\right\} 
\end{align}
and
\begin{align}
  &  \mathbb{E}[\| [f(\bar{x}_T)]_+ \|_2] \leqslant  \frac{1}{\sqrt{T}} + \frac{(L_0 + L_f)D_X^2 + \max(12M_f, 24\overline{\sigma})D_X\left(1 + (\|y^*\|_2 + 1)^2\right)}{T}\notag \\
    &\quad \frac{1}{\sqrt{T}}\left\{\left[12\sqrt{6}(\|y^*\|_2 + 1)^2 + \frac{13}{4\sqrt{6}}\right]\overline{\sigma_{X, f}} + \sqrt{2}D_X\left[\sqrt{\mathcal{H}_*^2 + \overline{\sigma}_0^2 + 48\overline{\sigma}^2} + \frac{\overline{\zeta}^2 + \mathcal{H}_*^2}{\sqrt{\mathcal{H}_*^2 + \sigma_0^2 + 48\|\sigma\|_2^2}}\right]\right\} \notag
    \end{align}
As a consequence, to obtain an $(\varepsilon, \varepsilon)$-optimal solution with Algorithm $1$, we need the number of iterations to be
\begin{equation}
\begin{aligned}[b]
    T := \max\Biggl\{\frac{25}{\varepsilon^2}, \frac{5(L_0+L_f)D_X^2 + 5\max(12M_f, 24\overline{\sigma})D_X\left(1 +(\|y^*\|_2 + 1)^2\right)}{\varepsilon}, \\ \frac{\overline{\sigma_{X,f}^2}}{\varepsilon^2}\left[60\sqrt{6}(\|y^*\|_2 + 1)^2+ \frac{65}{4\sqrt{6}}\right]^2, \\\frac{50}{\varepsilon^2}\left[D_X\sqrt{\mathcal{H}_*^2 + \bar{\sigma}_0^2 + 48\overline{\sigma}^2} + \frac{D_X(\overline{\zeta^2} + \mathcal{H}_*^2)}{\sqrt{\mathcal{H}_*^2 + \sigma_0^2 + 48\|\sigma\|_2^2}}\right]^2\Biggr\}.
\end{aligned}
\end{equation}
Now, by the choice of $\nu_o$ and $\nu_i$ in~\eqref{nunot} and~\eqref{eq:nui} respectively, we see that the oracle complexity is given by $\mathcal{O}((m+1)n)/\epsilon^2).$
\end{proof}

\section{Auxiliary results}\label{sec:aux}
In this subsection, we state some auxiliary results from~\cite{boob2019proximal}, which we used in the proofs above. 
\begin{lemma}\cite[Lemma 2.4]{boob2019proximal}\label{lemma5}
Assume that $g: S \to \mathbb{R}$ satisfies
\begin{align}
    g(y) \geqslant g(x) + \langle g'(x), y - x\rangle + \mu W(y, x), \quad \forall x, y \in S \label{eq:1.6}
\end{align}
for some $\mu \geqslant 0$, where $S$ is convex set in $\mathbb{R}^n$. If $\bar{x} = \arg\min_{x \in S}\{g(x) + W(x, \tilde{x})\},$ then $g(\bar{x}) + W(\bar{x}, \tilde{x}) + (\mu + 1)W(x, \bar{x}) \leqslant g(x) + W(x, \tilde{x}), ~~\forall x \in S.$
\end{lemma}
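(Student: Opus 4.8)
The plan is to recognize Lemma~\ref{lemma5} as the standard three-point (prox) inequality and to reduce it to a single variational inequality coming from first-order optimality of $\bar{x}$. Writing $\phi(x)\coloneqq g(x)+W(x,\tilde{x})$, the target inequality is exactly $\phi(x)\geqslant \phi(\bar{x})+(\mu+1)W(x,\bar{x})$ for all $x\in S$, so once the Bregman and convexity bookkeeping is done, everything collapses onto showing that the relevant gradient functional is nonnegative along every feasible direction. I would assemble three ingredients: (i) a three-point identity for the Bregman divergence $W$, (ii) the $\mu$-convexity hypothesis~\eqref{eq:1.6} applied at the minimizer, and (iii) the first-order optimality condition for the constrained minimization defining $\bar{x}$.

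First I would establish the Bregman three-point identity. Expanding $W(y,x)=\omega(y)-\omega(x)-\langle\nabla\omega(x),y-x\rangle$ directly and cancelling the $\omega$ terms gives, for all $x\in S$,
\begin{equation*}
W(x,\tilde{x})=W(x,\bar{x})+W(\bar{x},\tilde{x})+\langle\nabla\omega(\bar{x})-\nabla\omega(\tilde{x}),\,x-\bar{x}\rangle,
\end{equation*}
which is a pure identity and carries no inequalities. Next I would apply hypothesis~\eqref{eq:1.6} with $y=x$ and the base point $\bar{x}$, yielding $g(x)-g(\bar{x})\geqslant\langle g'(\bar{x}),x-\bar{x}\rangle+\mu W(x,\bar{x})$. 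Finally, since $\bar{x}=\arg\min_{x\in S}\{g(x)+W(x,\tilde{x})\}$ over the convex set $S$ and $\phi$ is differentiable (as $\omega$ is continuously differentiable and $g$ admits the gradient $g'$, with $\nabla_1 W(\bar{x},\tilde{x})=\nabla\omega(\bar{x})-\nabla\omega(\tilde{x})$), the first-order optimality condition gives
\begin{equation*}
\langle g'(\bar{x})+\nabla\omega(\bar{x})-\nabla\omega(\tilde{x}),\,x-\bar{x}\rangle\geqslant 0,\qquad\forall x\in S.
\end{equation*}
The endgame is then a one-line combination: substituting the convexity bound and the three-point identity into $\phi(x)-\phi(\bar{x})-(\mu+1)W(x,\bar{x})$, the $\mu W(x,\bar{x})$, the extra $W(x,\bar{x})$ from the identity, and the $-(\mu+1)W(x,\bar{x})$ all cancel, leaving precisely $\langle g'(\bar{x})+\nabla\omega(\bar{x})-\nabla\omega(\tilde{x}),x-\bar{x}\rangle$, which is nonnegative by optimality. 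This is exactly the claimed inequality.

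I expect the main obstacle to be the honest justification of the optimality step and the sign bookkeeping that surrounds it, rather than any hard estimate. Concretely, one must argue that minimizing $\phi$ over the convex set $S$ forces the variational inequality $\langle\nabla\phi(\bar{x}),x-\bar{x}\rangle\geqslant 0$; I would obtain this by taking $x_t=\bar{x}+t(x-\bar{x})\in S$, using $\phi(x_t)\geqslant\phi(\bar{x})$, and letting $t\to 0^{+}$, invoking differentiability of $\phi$ at $\bar{x}$ together with $W(x_t,\bar{x})=O(t^2)$ (which follows from $W(x_t,\bar{x})\leqslant\tfrac{L_\omega}{2}\|x_t-\bar{x}\|^2$ in Definition~\ref{def:www}). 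The only other place requiring care is orienting the three-point identity so that the divergence attached to $\bar{x}$ appears with the correct sign; a direct expansion, rather than appeal to any inequality, removes the risk there. Since the result is quoted from~\cite[Lemma 2.4]{boob2019proximal}, the steps above reproduce its proof in the present notation.
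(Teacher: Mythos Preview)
Your proposal is correct and follows essentially the same argument as the paper (which defers to \cite{boob2019proximal}): the three-point Bregman identity, the $\mu$-convexity inequality~\eqref{eq:1.6} at $\bar{x}$, and the first-order optimality condition $\langle g'(\bar{x})+\nabla\omega(\bar{x})-\nabla\omega(\tilde{x}),x-\bar{x}\rangle\geqslant 0$ combine exactly as you describe. The only cosmetic difference is that the cited proof phrases optimality via the normal cone inclusion $g'(\bar{x})+\nabla_1 W(\bar{x},\tilde{x})\in -N_S(\bar{x})$, which is equivalent to your variational inequality.
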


\begin{lemma} \cite[Lemma 2.6]{boob2019proximal}\label{lemma7}
Let $\rho_0, \dots, \rho_j$ be a sequence of elements in $\mathbb{R}^n$ and let $S$ be a convex set in $\mathbb{R}^n$. Define the sequence $v_t, t = 0, 1, \dots$, as follows: $v_0 \in S$ and 
\begin{align*}
    v_{t+1} = \arg\min_{x \in S} \langle \rho_t, x\rangle + \frac{1}{2}\| x - v_t\|_2^2.
\end{align*}
Then for any $x \in S$ and $t \geqslant 0$, the following inequalities hold
\begin{align}
    \langle \rho_t, v_t - x\rangle &\leqslant \frac{1}{2}\| x - v_t\|_2^2 - \frac{1}{2}\| x - v_{t+1}\|_2^2 + \frac{1}{2}\|\rho_t\|_2^2, \label{eq:1.38} \\
    \textstyle\sum_{t=0}^j\langle \rho_t, v_t - x\rangle &\leqslant \frac{1}{2}\|x - v_0\|_2^2 + \frac{1}{2}\sum_{t=0}^j\|\rho_t\|_2^2. \label{eq:1.64}
\end{align}
\end{lemma}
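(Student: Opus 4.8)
The plan is to derive the per-step inequality \eqref{eq:1.38} directly from the optimality characterization of the prox step, and then obtain \eqref{eq:1.64} by a single telescoping sum. First I would invoke Lemma~\ref{lemma5} with the linear choice $g(x)=\langle \rho_t, x\rangle$, the Euclidean prox-function $W(y,x)=\frac{1}{2}\|y-x\|_2^2$, anchor point $\tilde{x}=v_t$, and strong-convexity parameter $\mu=0$. A linear function trivially satisfies the tangent-line inequality \eqref{eq:1.6} with $\mu=0$, and $v_{t+1}$ is by definition the minimizer of $g(x)+W(x,v_t)$ over $S$, so Lemma~\ref{lemma5} applies verbatim and yields the three-point inequality
\begin{align*}
\langle \rho_t, v_{t+1}-x\rangle + \tfrac{1}{2}\|v_{t+1}-v_t\|_2^2 + \tfrac{1}{2}\|x-v_{t+1}\|_2^2 \leqslant \tfrac{1}{2}\|x-v_t\|_2^2,
\end{align*}
valid for every $x\in S$.

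Next I would handle the displacement term that appears when I split $\langle\rho_t, v_t - x\rangle = \langle \rho_t, v_{t+1}-x\rangle + \langle \rho_t, v_t - v_{t+1}\rangle$. By Cauchy--Schwarz followed by Young's inequality,
\begin{align*}
\langle \rho_t, v_t - v_{t+1}\rangle \leqslant \|\rho_t\|_2\,\|v_t-v_{t+1}\|_2 \leqslant \tfrac{1}{2}\|\rho_t\|_2^2 + \tfrac{1}{2}\|v_{t+1}-v_t\|_2^2.
\end{align*}
The crucial observation is that the quadratic $\tfrac{1}{2}\|v_{t+1}-v_t\|_2^2$ generated here cancels exactly against the $-\tfrac{1}{2}\|v_{t+1}-v_t\|_2^2$ term in the three-point inequality; this exact cancellation is precisely what dictates the $\tfrac{1}{2}$ weighting on the quadratic in the definition of $v_{t+1}$. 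Adding the two displays and collecting terms then produces \eqref{eq:1.38} verbatim.

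Finally, to establish \eqref{eq:1.64}, I would sum \eqref{eq:1.38} over $t=0,\dots,j$. The Euclidean terms telescope, so that $\sum_{t=0}^j\bigl(\tfrac{1}{2}\|x-v_t\|_2^2 - \tfrac{1}{2}\|x-v_{t+1}\|_2^2\bigr) = \tfrac{1}{2}\|x-v_0\|_2^2 - \tfrac{1}{2}\|x-v_{j+1}\|_2^2$, and discarding the nonnegative final term $\tfrac{1}{2}\|x-v_{j+1}\|_2^2$ immediately yields the stated bound. There is essentially no deep obstacle in this argument; the only points meriting care are the sign bookkeeping in the Young's-inequality step, so that the $\|v_{t+1}-v_t\|_2^2$ contributions cancel rather than accumulate, and the verification that \eqref{eq:1.6} holds with $\mu=0$ for the linear $g$ so that Lemma~\ref{lemma5} can be applied without modification.
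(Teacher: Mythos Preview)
Your proposal is correct and follows essentially the same route as the paper's (commented-out) proof: invoke Lemma~\ref{lemma5} with $g(x)=\langle\rho_t,x\rangle$, $W(y,x)=\tfrac12\|y-x\|_2^2$, $\tilde x=v_t$, $\mu=0$ to obtain the three-point inequality, then use Cauchy--Schwarz and Young's inequality on $\langle\rho_t,v_t-v_{t+1}\rangle$ so that the $\tfrac12\|v_{t+1}-v_t\|_2^2$ terms cancel, and finally telescope. The only difference is cosmetic presentation; there is no substantive divergence in the argument.
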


\begin{lemma} \cite[Lemma 2.8]{boob2019proximal} \label{lemma9}
Let $\{a_t\}_{t \geqslant 0}$ be a nonnegative sequence, $m_1, m_2 \geqslant 0$ be constants such that $a_0 \leqslant m_1$ and the following relation holds for all $t \geqslant 1$: 
\begin{align*}
    a_t \leqslant m_1 + m_2\textstyle\sum_{k=0}^{t-1}a_k.
\end{align*}
Then we have $a_t \leqslant m_1(1 + m_2)^t$. 
\end{lemma}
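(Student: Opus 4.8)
The plan is to prove the bound $a_t \leq m_1(1+m_2)^t$ by \emph{strong induction} on $t$, feeding the inductive hypothesis back into the given recurrence and collapsing the resulting geometric sum. This is the natural strategy for a discrete Gr\"onwall-type inequality: the recurrence bounds $a_t$ in terms of a \emph{cumulative} sum of all earlier terms, so an estimate on every earlier term (not merely the immediately preceding one) is exactly what strong induction supplies.

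For the base case $t=0$, the hypothesis $a_0 \leqslant m_1 = m_1(1+m_2)^0$ gives the claim immediately. For the inductive step, I would assume $a_k \leqslant m_1(1+m_2)^k$ for all $k \leqslant t-1$ and substitute into the given relation:
\begin{align*}
    a_t \;\leqslant\; m_1 + m_2\textstyle\sum_{k=0}^{t-1}a_k \;\leqslant\; m_1 + m_1 m_2\textstyle\sum_{k=0}^{t-1}(1+m_2)^k.
\end{align*}
When $m_2 > 0$ the geometric series evaluates to $\sum_{k=0}^{t-1}(1+m_2)^k = \big((1+m_2)^t - 1\big)/m_2$, so the right-hand side becomes $m_1 + m_1\big((1+m_2)^t - 1\big) = m_1(1+m_2)^t$, closing the induction. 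When $m_2 = 0$ the factor $m_2$ annihilates the sum entirely, leaving $a_t \leqslant m_1 = m_1(1+m_2)^t$, so the edge case is handled trivially by the same computation.

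There is no genuine obstacle here; the only point requiring a moment's care is the geometric-series closed form, together with noticing that the $m_2 = 0$ case needs no separate argument because the summation term vanishes before evaluation. Nonnegativity of the sequence is used implicitly only to ensure the bounds propagate in the correct direction (the inductive hypothesis upper-bounds each $a_k$, and multiplying by the nonnegative constant $m_2$ preserves the inequality). I would state the proof in two short lines—base case and inductive step—as the whole argument fits in a few elementary manipulations.
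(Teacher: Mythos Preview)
Your proof is correct and follows essentially the same approach as the paper's (strong induction plus the geometric-series closed form). You are in fact slightly more careful: you explicitly observe that the $m_2=0$ case is harmless because the factor $m_2$ kills the sum before any division occurs, whereas the paper's write-up simply applies the geometric-series formula $\sum_{k}(1+m_2)^k=\big((1+m_2)^{t+1}-1\big)/m_2$ without singling out that edge case.
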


The proof of the above three lemmas could be found in~\cite{boob2019proximal}. We also state and prove the following result, which is an adaptation of Lemma 2.5 in~\cite{boob2019proximal} to the zeroth-order setting. We highlight that the definition of the terms $q_t$, $\bar{q}_t$, $\delta_t^F$ and $\delta_t^G$ appearing in Lemma~\ref{lemma6} below are based on the stochastic zeroth-order gradient estimator (defined in~\eqref{eq:gradest}). Whereas, the corresponding terms from Lemma 2.5 in~\cite{boob2019proximal} are based on the stochastic first-order gradients (as \cite{boob2019proximal} deals with stochastic first-order optimization). This necessitates dealing with the Lipschitz continuity based arguments of the smoothed functions rather than the original functions as done in~\cite{boob2019proximal}. We do so by combining an argument from~\cite{nesterov2017random} on the analysis of stochastic zeroth-order method, along with the proof of Lemma 2.5 from \cite{boob2019proximal}. Hence, we provide a full proof of Lemma~\ref{lemma6} below for the convenience of readers who might be unfamiliar with the analysis of stochastic zeroth-order optimization algorithms. 

\begin{lemma} \cite[Lemma 2.5 adapted to the stochastic zeroth-order setting]{boob2019proximal} \label{lemma6}: ~Suppose Assumptions~\ref{Assumption1},~\ref{Assumption3} and ~\ref{Assumption2} are satisfied. Assume that $\{\gamma_t, \eta_t, \tau_t, \theta_t\}$ is a non-negative sequence satisfying 
\begin{align}\label{eq:1.22} 
    \gamma_t\theta_t  = \gamma_{t-1}, \qquad  \gamma_t\tau_t  \leqslant \gamma_{t-1}\tau_{t-1}, \qquad
    \tau_t\eta_t  \leqslant \gamma_{t-1}\eta_{t-1}, 
\end{align}
and 
\begin{align}
    (2M_{f})^2\frac{\theta_t}{\theta_{t-1}} & \leqslant \frac{\tau_t(\eta_{t-2} - L_{0} - L_{f})}{12}, \quad \theta_t(M_{f})^2 \leqslant \frac{\tau_t(\eta_{t-1} - L_{0} - L_{f})}{12}, \notag \\
    (2M_{f})^2\frac{1}{\theta_{T-1}} & \leqslant \frac{\tau_{T-1}(\eta_{T-2} - L_{0} - L_{f})}{12}, \quad M_{f}^2 \leqslant \frac{\tau_{T-1}(\eta_{T-1} - L_{0} - L_{f})}{12}, \label{eq:1.25}
\end{align}
where $M_f, L_f$ are defined in \eqref{eq:1.10}. Then, for all $T \geqslant 1$ and $z \in \{(x, y) : x \in X, y \geqslant \mathbf{0}\}$, we have
\begin{align}
    &~~\sum_{t=0}^{T-1}\gamma_tQ_{\nu}(z_{t+1}, z) + \sum_{t=0}^{T-1}\gamma_t[\langle \delta_t^G, x_t - x\rangle - \langle \delta_{t+1}^F, y_{t+1} - y\rangle] \notag \\
    \leqslant&~~\gamma_0 \eta_0 W(x, x_0) - \gamma_{T-1}\eta_{T-1}W(x, x_T) + \frac{\gamma_0\tau_0}{2}\| y - y_0 \|_2^2 - \frac{\gamma_{T-1}\tau_{T-1}}{12}\|y - y_T\|_2^2 \notag \\
    + &~~\textstyle\sum_{t=0}^{T-1}\frac{2\gamma_t}{\eta_t - L_{0} - L_{f}}\left[\| \delta_t^G\|_*^2 + \left(\frac{L_{f}D_X}{2}[\|y\|_2 - 1]_+\right)^2\right] \notag \\
     + &~~\textstyle\sum_{t=1}^{T-1}\frac{3\gamma_t\theta_t^2}{2\tau_t}\|q_t - \bar{q}_t\|_2^2 + \frac{3\gamma_{T-1}}{2\tau_{T-1}}\|q_T - \bar{q}_T\|_2^2. \label{eq:1.29}
\end{align}
Here $q_t := \ell_F(x_t) - \ell_F(x_{t-1}), \bar{q}_t := \ell_f(x_t) - \ell_f(x_{t-1})$, $\delta_t^F := \ell_F(x_t) - \ell_f(x_t)$ and $\delta_t^G := G_{0, \nu_0}(x_t, \xi_t, u_t) + \sum_{i \in [m]}G_{i, \nu_i}(x_t, \xi_t, u_t)y_{i, t+1} - f_{0, \nu_0}'(x_t) - \sum_{i=1}^mf_{i, \nu_i}'(x_t)y_{i, t+1}$, where $ G_{0, \nu_0}$ and  $G_{i, \nu_0}$, $i\in[m]$ are the stochastic zeroth-order gradients defined in~\eqref{eq:gradest}.
\end{lemma}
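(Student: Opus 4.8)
The plan is to follow the proof of Lemma~2.5 in~\cite{boob2019proximal}, with the single structural change that every use of the smoothness of $f_i$ is replaced by the corresponding property of the Gaussian-smoothed surrogate $f_{i,\nu_i}$. By Theorem~\ref{thm1}, each $f_{i,\nu_i}$ has Lipschitz-continuous gradient with constant at most $L_i$, so the smoothed Lagrangian $\mathcal{L}_\nu$ inherits exactly the constants $L_0$ and $L_f$ of the original problem and all the deterministic descent inequalities carry over verbatim once written for $f_{0,\nu_0}$ and $f_{i,\nu_i}$. First I would record the two one-step inequalities produced by Steps~4 and~5 of Algorithm~\ref{Algorithm1}. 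The dual update $y_{t+1}=[y_t+\tau_t^{-1}s_t]_+$ is the minimizer of $-\langle s_t,y\rangle+\tfrac{\tau_t}{2}\|y-y_t\|_2^2$ over $y\geq\mathbf{0}$, and the primal update is a \textbf{prox} step; applying the three-point property (Lemma~\ref{lemma5}, with the Euclidean $W$ and $\mu=0$ for the dual, and with the general $W$ and $\mu=0$ for the primal) yields $\langle s_t,y-y_{t+1}\rangle\leq\tfrac{\tau_t}{2}\|y-y_t\|_2^2-\tfrac{\tau_t}{2}\|y-y_{t+1}\|_2^2-\tfrac{\tau_t}{2}\|y_{t+1}-y_t\|_2^2$ and $\langle v_t,x_{t+1}-x\rangle+\eta_tW(x_{t+1},x_t)+\eta_tW(x,x_{t+1})\leq\eta_tW(x,x_t)$, where $v_t=G_{0,\nu_0}(x_t,\xi_t,u_t)+\sum_{i}y_{i,t+1}G_{i,\nu_i}(x_t,\xi_t,u_t)$ is the zeroth-order gradient of the linearized Lagrangian at $x_t$.

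Next I would decompose $Q_\nu(z_{t+1},z)$ into the dual part $\langle y-y_{t+1},f_\nu(x_{t+1})\rangle$ and the primal part $f_{0,\nu_0}(x_{t+1})-f_{0,\nu_0}(x)+\langle y_{t+1},f_\nu(x_{t+1})-f_\nu(x)\rangle$, and rewrite each in terms of the linearizations and the errors $\delta_t^G$ and $\delta_t^F$ of the statement. For the primal part I would combine the descent inequality for $f_{0,\nu_0}$ (Theorem~\ref{thm1}) with convexity of the smoothed surrogates (preserved under Gaussian smoothing, and in force in every application of the lemma) to obtain $f_{0,\nu_0}(x_{t+1})-f_{0,\nu_0}(x)\leq\langle f'_{0,\nu_0}(x_t),x_{t+1}-x\rangle+\tfrac{L_0}{2}\|x_{t+1}-x_t\|^2$ and its analogue for the constraints. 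Writing $v_t=f'_{0,\nu_0}(x_t)+\sum_iy_{i,t+1}f'_{i,\nu_i}(x_t)+\delta_t^G$ and splitting $\langle\delta_t^G,x_{t+1}-x\rangle=\langle\delta_t^G,x_{t+1}-x_t\rangle+\langle\delta_t^G,x_t-x\rangle$, the second summand is carried to the left-hand side of~\eqref{eq:1.29} (it vanishes in expectation because $x_t$ is independent of the fresh noise), while a Young split of the first produces the $\tfrac{2\gamma_t}{\eta_t-L_0-L_f}\|\delta_t^G\|_*^2$ term once the resulting quadratic is absorbed into $\eta_tW(x_{t+1},x_t)$. The quadratic linearization remainder of the constraints, weighted by $y_{t+1}$, is likewise absorbed into the $\eta_t$ descent term for the portion with dual mass at most one, leaving the residual $\bigl(\tfrac{L_fD_X}{2}[\|y\|_2-1]_+\bigr)^2$ contribution.

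The heart of the argument is the extrapolation coupling. Writing $s_t=\ell_F(x_t)+\theta_tq_t$ with $q_t=\ell_F(x_t)-\ell_F(x_{t-1})$, the coupling left after combining the dual inequality with the gap decomposition is $\langle y_{t+1}-y,\ell_F(x_{t+1})-s_t\rangle=\langle y_{t+1}-y,q_{t+1}-\theta_tq_t\rangle$. Multiplying by $\gamma_t$ and summing, the identity $\gamma_t\theta_t=\gamma_{t-1}$ from~\eqref{eq:1.22} makes the $q$-chain telescope up to the boundary term $\gamma_{T-1}q_T$ recorded on the right of~\eqref{eq:1.29}. Replacing $q_t$ by $\bar q_t+\left(q_t-\bar q_t\right)$ and applying Young's inequality routes each residual either into the $\tfrac{3\gamma_t\theta_t^2}{2\tau_t}\|q_t-\bar q_t\|_2^2$ terms carried to the right-hand side, into the $\langle\delta_{t+1}^F,y_{t+1}-y\rangle$ term retained on the left (which vanishes in expectation, like its primal counterpart), or into penalties of the form $\tfrac{\tau_t}{2}\|y_{t+1}-y_t\|_2^2$; the step-size conditions~\eqref{eq:1.25}, together with $\gamma_t\tau_t\leq\gamma_{t-1}\tau_{t-1}$ and $\tau_t\eta_t\leq\gamma_{t-1}\eta_{t-1}$, are calibrated precisely so that these penalties are dominated by the negative $-\tfrac{\tau_t}{2}\|y_{t+1}-y_t\|_2^2$ and $-\eta_tW(x_{t+1},x_t)$ terms from the two projection inequalities. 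Telescoping the remaining distance terms then yields~\eqref{eq:1.29}.

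The main obstacle I anticipate is the bookkeeping of the coupling terms: each cross term must be routed either into a telescoping chain controlled by~\eqref{eq:1.22} or into a quadratic absorbed by the strong-convexity penalties controlled by~\eqref{eq:1.25}, and one must verify that the boundary terms at $t=0$ and $t=T-1$ match exactly and that nothing is double-counted. By contrast, the zeroth-order adaptation itself is mild: it reduces to the observation (Theorem~\ref{thm1}) that $f_{i,\nu_i}$ shares the smoothness constants of $f_i$, so the deterministic remainders are unchanged, while the stochastic errors are cleanly collected into $\delta_t^G$ and $\delta_t^F$ on the left-hand side of~\eqref{eq:1.29}---their conditional expectations vanish and their second moments are controlled---for later treatment in Lemma~\ref{lemma222} and Lemma~\ref{lemma10}.
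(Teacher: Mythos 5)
Your proposal is correct and follows essentially the same route as the paper's proof: the three-point inequality (Lemma~\ref{lemma5}) applied to both the dual and primal updates, convexity and $L_i$-smoothness of the Gaussian-smoothed surrogates (via Theorem~\ref{thm1}) to control the linearization remainders, the telescoping of the extrapolated $q$-chain using $\gamma_t\theta_t=\gamma_{t-1}$, and Young's inequality calibrated by~\eqref{eq:1.25} to absorb the cross terms into the negative $-\tfrac{\tau_t}{2}\|y_{t+1}-y_t\|_2^2$ and $-\eta_tW(x_{t+1},x_t)$ penalties, with the stochastic errors $\delta_t^G$ and $\delta_{t+1}^F$ collected on the left-hand side exactly as in the statement. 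The only cosmetic slip is that the constraint-linearization remainder $C_{t+1}=\tfrac{L_f}{2}\|x_{t+1}-x_t\|^2$ is weighted by the fixed comparator $\|y\|_2$ rather than by $y_{t+1}$ (the $y_{t+1}$-weighted term enters through the exact linearization $\ell_f(x_{t+1})$ and needs no remainder), but your final residual $\bigl(\tfrac{L_fD_X}{2}[\|y\|_2-1]_+\bigr)^2$ is the correct one.
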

\begin{proof}[\textbf{Proof of Lemma~\ref{lemma6}}]
Note that $y_{t+1} = \arg\min_{y \geqslant \mathbf{0}}\langle -s_t, y\rangle + \frac{\tau_t}{2}\| y - y_t\|_2^2$. Hence, using Lemma \ref{lemma5} with $y \mapsto \langle -s_t, y\rangle$ and $\mu = 0$, we have for all $y \geqslant \mathbf{0}$,
\begin{align}
    -\langle s_t, y_{t+1} - y\rangle \leqslant \frac{\tau_t}{2}[\|y- y_t\|_2^2 - \|y_{t+1} - y_t\|_2^2 - \|y - y_{t+1}\|_2^2]. \label{eq:1.11}
\end{align}
Let us denote $v_t := f_{0, \nu_0}'(x_t) + \sum_{i \in [m]}f_{i, \nu_i}'(x_t)y_{i, t+1}$ and $V_t := G_{0, \nu_0}(x_t, \xi_t, u_t) + \sum_{i \in [m]}G_{i, \nu_i}(x_t, \xi_t, u_t)y_{i, t+1}$. Then using Lemma \ref{lemma5} with $x \mapsto \langle V_t, x\rangle$ and the optimality of $x_{t+1}$, we have for all $x \in X$,
\begin{align}
    \langle V_t, x_{t+1} - x\rangle \leqslant \eta_t[W(x, x_t) - W(x_{t+1}, x_t)] - \eta_tW(x, x_{t+1}). \label{eq:1.13}
\end{align}
Due to the convexity of $f_{0, \nu_0}$ and $f_{i, \nu_i}$, and since $f_0, f_i$ are Lipschitz, and by the definition of $\ell_f$, and the fact that $y_{t+1} \geqslant \mathbf{0}$, we have
\begin{align}
    & \langle v_t, x_{t+1} - x\rangle = \langle f_{0, \nu_0}'(x_t) + \textstyle\sum_{i \in [m]}f_{i, \nu_i}'(x_t)y_{i, t+1}, x_{t+1} - x\rangle \notag \\
    & = \langle f_{0, \nu_0}'(x_t), x_{t+1} - x_t + x_t - x\rangle + \langle f_{\nu}'(x_t)y_{t+1}, x_{t+1} - x_t + x_t - x\rangle \notag \\
    & \geqslant f_{0, \nu_0}(x_t) - f_{0, \nu_0}(x) + f_{0, \nu_0}(x_{t+1}) - f_{0, \nu_0}(x_t) - \frac{L_{0}}{2}\| x_{t+1} - x_t\|^2 \notag \\ & + \langle y_{t+1}, \ell_f(x_{t+1}) - f_{\nu}(x_t)\rangle + \langle y_{t+1}, f_{\nu}(x_t) - f_{\nu}(x)\rangle \notag \\
    & = f_{0, \nu_0}(x_{t+1}) - f_{0, \nu_0}(x) + \langle \ell_f(x_{t+1}) - f_{\nu}(x), y_{t+1}\rangle - \underbrace{\frac{L_{0}}{2}\|x_{t+1} - x_t\|^2}_{O_{t+1}}. \label{eq:1.12}
\end{align}
 Combining \eqref{eq:1.13}, \eqref{eq:1.12}, noting that $\delta_t^G = V_t - v_t$, we have 
\begin{align}
    &~~f_{0, \nu_0}(x_{t+1}) - f_{0, \nu_0}(x) + \langle \ell_f(x_{t+1}) - f_{\nu}(x), y_{t+1}\rangle + \langle \delta_t^G, x_{t+1} - x\rangle \notag \\ 
     \leqslant &~~\eta_tW(x, x_t) - \eta_tW(x_{t+1}, x_t) - \eta_tW(x, x_{t+1}) + O_{t+1}. \label{eq:1.15}
\end{align}
Noting the definition of $Q_{\nu}(\cdot, \cdot)$ (see \eqref{eq:1.14}) and, adding \eqref{eq:1.11} and \eqref{eq:1.15}, we obtain
\begin{align}
    &~~Q_{\nu}(z_{t+1}, z) - \langle f_{\nu}(x_{t+1}), y\rangle + \langle \ell_f(x_{t+1}), y_{t+1}\rangle - \langle s_t, y_{t+1} - y\rangle + \langle \delta_t^G, x_{t+1} - x\rangle \notag \\
    \leqslant&~~\frac{\tau_t}{2}[\| y - y_t\|_2^2 - \|y_{t+1} - y_t\|_2^2 - \| y - y_{t+1}\|_2^2]  \notag \\
    +&~~\eta_tW(x, x_t) - \eta_tW(x_{t+1}, x_t) - \eta_tW(x, x_{t+1}) + O_{t+1}. \label{eq:1.16}
\end{align}
Note that we also have $ f_{i, \nu_i}(x_{t+1}) - \ell_{f_i}(x_{t+1}) \leqslant \frac{L_{i}}{2}\|x_{t+1} - x_t\|^2$. Then, using Cauchy-Schwarz inequality and noting definitions of $L_{f}$, we have 
\begin{align*}
    \langle y, f_{\nu}(x_{t+1}) - \ell_f(x_{t+1})\rangle \leqslant \| y\|_2\underbracket{\frac{L_{f}}{2}\|x_{t+1} - x_t\|^2}_{C_{t+1}}.
\end{align*}
 Noting the above relation and definitions of $q_t$ and $\delta_{t+1}^F$, we have
\begin{align}
    & \langle \ell_f(x_{t+1}), y_{t+1} \rangle - \langle f_{\nu}(x_{t+1}), y\rangle - \langle s_t, y_{t+1} - y\rangle \notag \\
     \geqslant& \langle \ell_f(x_{t+1}), y_{t+1}\rangle - \langle \ell_f(x_{t+1}), y\rangle - \langle s_t, y_{t+1} - y\rangle - \|y\|_2C_{t+1} \notag \\
     = &\langle \ell_f(x_{t+1}) - s_t, y_{t+1} - y\rangle - \|y\|_2C_{t+1} \notag \\
     = & \langle \ell_f(x_{t+1}) - \ell_F(x_t) - \theta_t q_t, y_{t+1} - y\rangle - \| y\|_2C_{t+1} \notag \\
     = &\langle q_{t+1}, y_{t+1} - y\rangle - \theta_t \langle q_t, y_t - y \rangle -\theta_t\langle q_t, y_{t+1} - y_t\rangle - \langle \delta_{t+1}^F, y_{t+1} - y\rangle - \|y\|_2C_{t+1}. \label{eq:1.17}
\end{align}
Then
\begin{align}
    \|y\|_2C_{t+1} & = \frac{L_{f}}{2}(\|y\|_2 - 1)\| x_{t+1} - x_t\|^2 + \frac{L_{f}}{2}\| x_{t+1} - x_t\|^2 \notag  \\
    & \leqslant \frac{L_{f}}{2}[\|y\|_2 - 1]_+\| x_{t+1} - x_t\|^2 + \frac{L_{f}}{2}\|x_{t+1} - x_t\|^2 \notag \\
    & \leqslant \frac{L_{f}}{2}\|x_{t+1} - x_t\|^2 + \frac{L_{f}D_X}{2}[\|y\|_2 - 1]_+\|x_{t+1} - x_t\|. \label{eq:1.18}
\end{align}

By \eqref{eq:1.16}, \eqref{eq:1.17}, and \eqref{eq:1.18}, noting the definition of $O_{t+1}$ and using the relation $\frac{1}{2}\| a - b\|^2 \leqslant W(a, b)$, we have 
\begin{align}
    & Q_{\nu}(z_{t+1}, z) + \langle q_{t+1}, y_{t+1} - y\rangle - \theta_t\langle q_t, y_t - y\rangle + \langle \delta_t^G, x_t - x \rangle - \langle \delta_{t+1}^F, y_{t+1} - y\rangle \notag \\
     \leqslant &\theta_t\langle q_t, y_{t+1} - y_t \rangle - \langle \delta_t^G, x_{t+1} - x_t\rangle  \notag \\ 
     + & \eta_tW(x, x_t) - \eta_tW(x, x_{t+1}) + \frac{\tau_t}{2}[\| y - y_t\|_2^2 - \| y_{t+1} - y_t\|_2^2 - \| y - y_{t+1} \|_2^2] \notag \\
     - & (\eta_t - L_{0} - L_{f})W(x_{t+1}, x_t) + \frac{L_{f}D_X}{2}[\| y\|_2 - 1]_+\|x_{t+1} - x_t\|. \label{eq:1.19}
\end{align}
Multiplying \eqref{eq:1.19} by $\gamma_t$, summing them up from $t = 0$ to $T - 1$ with $T \geqslant 1$, we obtain
\begin{align}
    &~~\textstyle\sum_{t=0}^{T-1}\gamma_tQ_{\nu}(z_{t+1}, z) + \sum_{t=0}^{T-1}[\gamma_t\langle q_{t+1}, y_{t+1} - y\rangle - \gamma_t\theta_t \langle q_t, y_t - y\rangle] + \sum_{t=0}^{T-1}\gamma_t[\langle \delta_t^G, x_t - x\rangle - \langle \delta_{t+1}^F, y_{t+1} - y\rangle] \notag \\ 
     \leqslant &~~ \textstyle\sum_{t=0}^{T-1}[\gamma_t\theta_t\langle q_t - \bar{q}_t, y_{t+1} - y_t\rangle + \gamma_t\theta_t\langle \bar{q}_t, y_{t+1} - y_t\rangle + \langle \gamma_t\delta_t^G, x_t - x_{t+1}\rangle ] \notag \\
     +&~~ \textstyle\sum_{t=0}^{T-1}\left[\frac{\gamma_t\tau_t}{2}\| y - y_t\|_2^2 - \frac{\gamma_t\tau_t}{2}\| y - y_{t+1}\|_2^2\right] - \sum_{t=0}^{T-1}\frac{\gamma_t\tau_t}{2}\|y_{t+1} - y_t\|_2^2 \notag \\
    + &~~ \textstyle\sum_{t=0}^{T-1}[\gamma_t\eta_t W(x, x_t) - \gamma_t\eta_t W(x, x_{t+1})] \notag \\
     - &~~ \sum_{t=0}^{T-1}\left[\gamma_t(\eta_t - L_{0} - L_{f})W(x_{t+1}, x_t) - \gamma_t\underbracket{\left(\frac{L_{f}D_X}{2}[\|y\|_2 - 1]_+\right)}_{\mathcal{H}(y)}\|x_{t+1} - x_t\|\right], \label{eq:1.20}
\end{align}
where $\mathcal{H}(y) := \frac{L_{f}D_X}{2}[\|y\|_2 - 1]_+$. Now we focus our attention to handle the inner product terms of \eqref{eq:1.20}. Noting the definition of $\bar{q}_t$, we have 
\begin{align}
    \| \bar{q}_t \|_2 & = \| \ell_f(x_t) - \ell_f(x_{t-1}) \|_2 \notag \\
    & = \| f_{\nu}(x_{t-1}) + f_{\nu}'(x_{t-1})^T(x_t - x_{t-1}) - f_{\nu}(x_{t-2}) - f_{\nu}'(x_{t-2})^T(x_{t-1} - x_{t-2})\|_2 \notag \\ 
    & \leqslant \| f_{\nu}(x_{t-1}) - f_{\nu}(x_{t-2}) \|_2 + \| f_{\nu}'(x_{t-1})^T(x_t - x_{t-1})\|_2 + \| f_{\nu}'(x_{t-2})^T(x_{t-1} - x_{t-2})\|_2 \notag \\ & \leqslant 2M_f\| x_{t-1} - x_{t-2}\| + M_f\| x_t - x_{t-1}\|,
\end{align}
where we used the fact that $ \| f_{\nu}(x) - f_{\nu}(y)\|  \leqslant M_f\| x - y\|$ and $ \| [f_{\nu}'(x)]^T(y-x) \|_2  \leqslant M_f\|y-x\|$, which follows from the Assumptions \ref{Assumption3} and~\ref{Assumption2} and Theorem \ref{thm1}; see~\cite{nesterov2017random} for a similar argument.

Using the above relation for $\|\bar{q}_t\|_2$, we now obtain
\begin{align}
    & ~~\gamma_t\theta_t\langle \bar{q}_t, y_{t+1} - y_t \rangle - \frac{\gamma_t\tau_t}{3}\| y_{t+1} - y_t\|_2^2 - \frac{\gamma_{t-2}(\eta_{t-2} - L_{0} - L_{f})}{4}W(x_{t-1}, x_{t-2})  \\
  &   - \frac{\gamma_{t-1}(\eta_{t-1} - L_{0} - L_{f})}{4}W(x_t, x_{t-1}) \notag \\
    \leqslant & ~~\gamma_t\theta_t \|\bar{q}_t\|_2 \| y_{t+1} - y_t\|_2 - \frac{\gamma_t\tau_t}{3}\| y_{t+1} - y_t\|_2^2 \notag \\
     &~~-  \frac{\gamma_{t-2}(\eta_{t-2} - L_{0} - L_{f})}{4}W(x_{t-1}, x_{t-2}) - \frac{\gamma_{t-1}(\eta_{t-1} - L_{0} - L_{f})}{4}W(x_t, x_{t-1}) \notag \\
     \leqslant & 2M_{f}\gamma_t\theta_t \|x_{t-1} - x_{t-2}\| \|y_{t+1} - y_t\|_2 - \frac{\gamma_t\tau_t}{6}\|y_{t+1} - y_t\|_2^2 - \frac{\gamma_{t-2}(\eta_{t-2} - L_{0} - L_{f})}{4}W(x_{t-1}, x_{t-2}) \notag \\
      &~~ + M_{f}\gamma_t\theta_t\|x_t - x_{t-1}\| \|y_{t+1} - y_t\|_2 - \frac{\gamma_t\tau_t}{6}\|y_{t+1} - y_t\|_2^2 - \frac{\gamma_{t-1}(\eta_{t-1} - L_{0} - L_{f})}{4}W(x_t, x_{t-1}) \notag \\
    \leqslant & ~0, \label{eq:1.23}
\end{align}
where the last inequality follows by applying the relation $W(x, y) \geqslant \frac{1}{2}\| x - y\|$, Young's inequality $(2ab \leqslant a^2 + b^2)$ applied twice, once with 
\begin{align*}
    a & = \left(\frac{\gamma_t \tau_t}{6}\right)^{1/2}\|y_{t+1} - y_t\|, \quad b = \left(\frac{\gamma_{t-2}(\eta_{t-2} - L_{0} - L_{f})}{8}\right)^{1/2}\| x_{t-1} - x_{t-2}\| 
\end{align*}
and second time with 
\begin{align*}
    a & = \left(\frac{\gamma_t\tau_t}{6}\right)^{1/2}\| y_{t+1} - y_t \|, \quad b = \left(\frac{\gamma_{t-1}(\eta_{t-1} - L_{0} - L_{f})}{8}\right)^{1/2}\|x_t - x_{t-1}\|,
\end{align*}
and the fact that
\begin{align*}
    2M_{f}\gamma_t\theta_t & \leqslant \left\{\frac{\gamma_t\gamma_{t-2}\tau_t(\eta_{t-2} - L_{0} - L_{f})}{12}\right\}^{1/2} \quad \Leftrightarrow \quad (2M_{f})^2\frac{\theta_t}{\theta_{t-1}} \leqslant \frac{\tau_t(\eta_{t-2} - L_{0} - L_{f})}{12}, \\
    M_{f}^2\gamma_t^2\theta_t^2 & \leqslant \frac{\gamma_t\gamma_{t-1}\tau_t(\eta_{t-1} - L_{0} - L_{f})}{12} \quad \Leftrightarrow \quad M_{f}^2\theta_t \leqslant \frac{\tau_t(\eta_{t-1} - L_{0} - L_{f})}{12},
\end{align*}
where the equivalences follow due to \eqref{eq:1.22}. 
Using Young's inequality, Cauchy-Schwarz inequality and the relation $u^Tv \leqslant \| u \| \|v\|_*$, we have
\begin{align}
    \gamma_t\theta_t\langle q_t - \bar{q}_t, y_{t+1} - y_t\rangle - \frac{\gamma_t\tau_t}{6}\| y_{t+1} - y_t \|_2^2 & \leqslant \frac{3\gamma_t\theta_t^2}{2\tau_t}\| q_t - \bar{q}_t \|_2^2, \notag \\
    \langle \gamma_t\delta_t^G, x_t - x_{t+1}\rangle - \frac{\gamma_t(\eta_t - L_{0} - L_{f})}{4}W(x_{t+1}, x_t) & \leqslant \frac{2\gamma_t}{\eta_t - L_{0} - L_{f}}\| \delta_t^G\|_*^2, \label{eq:1.24} \\
    \gamma_t\mathcal{H}(y)\| x_{t+1} - x_t\| - \frac{\gamma_t(\eta_t - L_{0} - L_{f})}{4}W(x_{t+1}, x_t) & \leqslant \frac{2\gamma_t}{\eta_t - L_{0} - L_{f}}\mathcal{H}(y)^2. \notag
\end{align}
Using \eqref{eq:1.23} and \eqref{eq:1.24} for $t = 0, \dots, T-1$ inside \eqref{eq:1.20} and noting \eqref{eq:1.22}, we have 
\begin{align}
    & \sum_{t=0}^{T-1}\gamma_tQ_{\nu}(z_{t+1}, z) + \gamma_{T-1}\langle q_T, y_T - y\rangle + \sum_{t=0}^{T-1}\gamma_t[\langle \delta_t^G, x_t - x\rangle - \langle \delta_{t+1}^F, y_{t+1} - y\rangle] \notag \\
    \leqslant & \gamma_0\eta_0W(x, x_0) - \gamma_{T-1}\eta_{T-1}W(x, x_T) + \frac{\gamma_0\tau_0}{2}\|y - y_0\|_2^2 - \frac{\gamma_{T-1}\tau_{T-1}}{2}\| y - y_T\|_2^2 \notag \\
    & + \sum_{t=0}^{T-1}\left[\frac{3\gamma_t\theta_t^2}{2\tau_t}\|q_t - \bar{q}_t\|_2^2 + \frac{2\gamma_t}{\eta_t - L_{0} - L_{f}}\|\delta_t^G\|_*^2 + \frac{2\gamma_t}{\eta_t - L_{0} - L_{f}}\mathcal{H}(y)^2\right] \notag \\
    & - \frac{\gamma_{T-2}(\eta_{T-2} - L_{0} - L_{f})}{4}W(x_{T-1}, x_{T-2}) - \frac{\gamma_{T-1}(\eta_{T-1} - L_{0} - L_{f})}{2}W(x_T, x_{T-1}), \label{eq:1.28}
\end{align}
where in the left hand side of the above relation, we used the fact that $q_0 = \ell_F(x_0) - \ell_F(x_{-1}) = \mathbf{0}$. Similarly, we see that $\bar{q}_0 = \mathbf{0}$. Hence, we can ignore $\| q_0 - \bar{q}_0 \|_2^2$ term in the right hand side of the above relation, after which we obtain
\begin{align}
    & -\gamma_{T-1}\langle \bar{q}_T, y_T - y \rangle - \frac{\gamma_{T-1}\tau_{T-1}}{3}\| y - y_T\|_2^2 \notag  \\
    & - \frac{\gamma_{T-2}(\eta_{T-2} - L_{0} - L_{f})}{4}W(x_{T-1}, x_{T-2}) - \frac{\gamma_{T-1}(\eta_{T-1} - L_{0} - L_{f})}{2}W(x_T, x_{T-1}) \notag \\
     \leqslant &~~ M_{f}\gamma_{T-1}\| x_T - x_{T-1}\| \|y_T - y\|_2 - \frac{\gamma_{T-1}\tau_{T-1}}{12}\| y - y_T\|_2^2 - \frac{\gamma_{T-1}(\eta_{T-1} - L_{0} - L_{f})}{2}W(x_T, x_{T-1}) \notag \\ & + 2M_{f}\gamma_{T-1}\| x_{T-1} - x_{T-2}\| \| y_T - y\|_2 - \frac{\gamma_{T-1}\tau_{T-1}}{6}\| y - y_T \|_2^2 - \frac{\gamma_{T-2}(\eta_{T-2} - L_{0} - L_{f})}{4}W(x_{T-1}, x_{T-2}) \notag \\
    & - \frac{\gamma_{T-1}\tau_{T-1}}{12}\| y_T - y\|_2^2 \notag \\
     \leqslant & ~~-\frac{\gamma_{T-1}\tau_{T-1}}{12}\|y_T - y\|_2^2, \label{eq:1.26}
\end{align}
where the last relation follows from \eqref{eq:1.25}, Young's inequality and the fact that 
\begin{align*}
    2M_{f}\gamma_{T-1} & \leqslant \left\{\frac{\gamma_{T-2}\gamma_{T-1}\tau_{T-1}(\eta_{T-2}- L_{0} - L_{f})}{12}\right\}^{1/2} \quad \Leftrightarrow \quad (2M_{f})^2\frac{1}{\theta_{T-1}} \leqslant \frac{\tau_{T-1}(\eta_{T-2} - L_{0} - L_{f})}{12} \\
    M_{f}\gamma_{T-1} & \leqslant \left\{\frac{\gamma_{T-1}^2\tau_{T-1}(\eta_{T-1} - L_{0} - L_{f})}{12}\right\}^{1/2} \quad \Leftrightarrow \quad M_{f}^2 \leqslant \frac{\tau_{T-1}(\eta_{T-1} - L_{0} - L_{f})}{12}.
\end{align*}
Moreover, again using Young's inequality and Cauchy-Schwarz inequality, we have 
\begin{align}
    -\gamma_{T-1}\langle q_T - \bar{q}_T, y_T - y\rangle - \frac{\gamma_{T-1}\tau_{T-1}}{6}\| y - y_T\|_2^2 \leqslant \frac{3\gamma_{T-1}}{2\tau_{T-1}}\| q_T - \bar{q}_T\|_2^2. \label{eq:1.27}
\end{align}
Using \eqref{eq:1.26} and \eqref{eq:1.27} in relation \eqref{eq:1.28}, noting that $q_0 - \bar{q}_0 = \mathbf{0}$ and replacing the definition of $\mathcal{H}(y)$, we obtain \eqref{eq:1.29}, which completes the proof.
\end{proof}

\end{document}